\newcommand{\Ric}{\mathrm{Ric}}
\newcommand{\Vol}{\mathrm{Vol}}
\newcommand{\sq}{\backslash}
\newcommand{\Aut}{\mathrm{Aut}}
\newcommand{\sphere}{{\mathbb{S}}}
\newcommand{\proj}{\pi_\mathrm{pr}}
\newcommand{\blowup}{\pi_\mathrm{b}}
\newtheorem{thm}{Theorem}[section]
\newtheorem{fact}{Fact}[section]
\newtheorem{prop}{Proposition}[section]
\newtheorem{lmm}{Lemma}[section]
\newtheorem{question}{Question}[section]
\theoremstyle{remark} 
\newtheorem*{rmk}{Remark}
\title[$4$-Manifold with Prescribed Asymptotic Cone]{A family of $4$-Manifolds with Nonnegative Ricci Curvature and Prescribed Asymptotic Cone}
\author{Shengxuan Zhou}
\address{Beijing International Center for Mathematical Research\\
Peking University\\
Beijing\\ 100871\\ China}
\email{zhoushx19@pku.edu.cn}
\thanks{}
\keywords{}
\date{}
\dedicatory{}
\begin{document}

\begin{abstract}
In this paper, we show that for any finite subgroup $\Gamma 
< O(4)$ acting freely on $\mathbb{S}^3$, there exists a $4$-dimensional complete Riemannian manifold $(M,g)$ with $\Ric_g \geq 0 $, such that the asymptotic cone of $(M,g)$ is $C(\mathbb{S}_\delta^3 /\Gamma )$ for some $\delta = \delta (\Gamma ) >0$. This answers a question of Bru\`e-Pigati-Semola \cite{bps1} about the topological obstructions of $4$-dimensional non-collapsed tangent cones. Combining this result with a recent work of Bru\`e-Pigati-Semola \cite{bps1}, one can classify the $4$-dimensional non-collapsed tangent cone in the topological sense.
\end{abstract}
\maketitle
\tableofcontents

\section{Introduction}
\label{introduction}

Let $(M_i ,g_i ,p_i)$ be a sequence of $n$-dimensional complete Riemannian manifolds with a uniform lower bound on the Ricci curvature $\Ric_{g_i} \geq -(n-1) $. Gromov’s compactness theorem then implies that there exists a subsequence of $(M_i ,g_i ,p_i)$ converging to a metric space $(X_\infty ,d_{\infty} ,p_\infty )$. Without loss of generality, we assume that $(M_i ,g_i ,p_i)$ converge to $(X_\infty ,d_{\infty} ,p_\infty )$.

In this paper, we consider non-collapsed case, where $\Vol (B_1 (p_i)) \geq v >0 $. In this case, the works of Colding \cite[Theorem 0.1]{colding1} and Cheeger-Colding \cite[Theorem 5.9]{chco3} show that the Hausdorff dimension of $X$ is $n$, and the measures of $(M_i,g_i)$ converge to the $n$-dimensional Hausdorff measure $\mathcal{H}^n $ on $(X^n ,d )$. 

In the non-collapsed case, there are many properties derived from the study of tangent cones. For tangent cones isomorphic to the Euclidean space, a Reifenberg type result of Cheeger-Colding \cite[Theorem 5.14]{chco3} shows that the limit space has a manifold structure away from a codimension $2$ set, which is the complement of an open neighborhood of $R_n (X)$, where $R_n (X)$ is the subset of $X$ containing points $x\in X$ satisfying that every tangent cone of $X$ at $x$ is isometric to $\mathbb{R}^n$. More recently, the work of Cheeger-Jiang-Naber \cite[Theorem 1.14]{jcwsjan1} was able to prove that in this case the codimension $2$ Hausdorff measure of the subset we removed can be finite.

A natural problem is to consider those tangent cones that are not isomorphic to a Euclidean space. For $4$-dimensional non-collapsed Ricci limit space with $2$-side Ricci bound and an $L^2$-integral bound of curvature, a standard result due to Anderson \cite{anderson1}, Bando–Kasue–Nakajima \cite{bkn1} and Tian \cite{tg5} shows that $X$ is a topological orbifold with isolated singularities, and the tangent cones at the singular points are $C(\mathbb{S}^3 /\Gamma )$ for some $\Gamma < O(4) $. Note that Cheeger-Naber have shown that the $L^2$-integral bound on curvature is automatic. Recently, Bru\`e-Pigati-Semola \cite{bps1} proved that any tangent cone on a $4$-dimension non-collapsed Ricci limit space is homeomorphic to $C(\mathbb{S}^3 /\Gamma )$ for some $\Gamma < O(4) $ acting freely on $\mathbb{S}^3$.

Another question is which cones are tangent cones of Ricci limit spaces. If $\Gamma$ is a subgroup of $SU(2)$, then Kronheimer \cite{krhm1} proved that there exists a Ricci flat $4$-manifold with Euclidean volume growth whose cone at infinity is isometric to $C(\mathbb{S}^3/\Gamma)$. By considering quotients of Kronheimer's examples, \c Suvaina \cite{sus1} and Wright \cite{wright1} extended this result to the case when $\Gamma$ is a cyclic group of type $\frac{1}{dn^2} (1,dnm-1) $. Note that there are numerous other discrete subgroups of $O(4)$. Moreover, Biquard \cite{biq1, biq2} and Ozuch \cite{ozuch1, ozuch2} obtained some obstructions to the existence of Einstein metrics with positive Einstein constant on topological desingularization of $4$-dimensional orbifolds. Thus, the following question arises from Bru\`e-Pigati-Semola \cite{bps1}.

\begin{question}[{\cite{bps1}, Question 1.15}]
Let $\Gamma <O(4)$ be a discrete group acting freely on $\mathbb{S}^3$. Is there an $RCD(2,3)$ metric over $ \mathbb{S}^3 /\Gamma $ such that $C(\mathbb{S}^3 /\Gamma )$ is a non-collapsed Ricci limit space?
\end{question}

In this paper, we investigate this question. The following theorem is our main theorem.

\begin{thm}\label{thmconstructionO4}
Let $\Gamma $ be a finite subgroup of $O(4)$ acting freely on the unit sphere $\mathbb{S}^3$. Then there exists a complete Riemannian manifold $(M,g)$ such that $\Ric_g \geq 0 $, and $(M,g)$ is asymptotic to the cone $C(\mathbb{S}_\delta^3 /\Gamma )$ for some $\delta = \delta (\Gamma ) >0$, where $\mathbb{S}_\delta^3 $ is the round $3$-sphere with radius $\delta$.
\end{thm}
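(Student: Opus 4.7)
The plan is to build $(M,g)$ by a cohomogeneity-one construction whose principal orbit is $\mathbb{S}^3/\Gamma$, write the metric as an explicit warped product, and exploit the freedom in the constant $\delta=\delta(\Gamma)$ to produce enough quantitative slack to force $\Ric_g\ge 0$ globally.

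First I would place $\Gamma$ in a normal form via the classical Hopf--Wolf classification of finite subgroups of $O(4)$ acting freely on $\mathbb{S}^3$. Each such $\Gamma$ normalizes a Hopf-type circle (possibly together with an orientation-reversing involution), so $\mathbb{S}^3/\Gamma$ is Seifert fibered over a spherical $2$-orbifold and carries a canonical invariant metric with at least a $U(1)$-symmetry. I would then pick a smooth $4$-manifold $M$ filling $\mathbb{S}^3/\Gamma$: when $\Gamma\subset SU(2)$ the Kronheimer ALE space is the natural choice, and in general $M$ can be taken as a disk bundle adapted to the Seifert fibration, with an appropriate singular orbit of dimension $0$, $1$, or $2$.

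On the end $[R_0,\infty)\times(\mathbb{S}^3/\Gamma)$ I would use the ansatz
\[
 g = dr^2 + a(r)^2\,\sigma_1^2 + b(r)^2(\sigma_2^2+\sigma_3^2),
\]
with $\{\sigma_i\}$ the standard left-invariant forms on $\mathbb{S}^3$ adapted to the $\Gamma$-action, imposing that $a,b$ extend smoothly onto the core at $r=R_0$ and that $a(r),b(r)\sim \delta r$ as $r\to\infty$. Substituting into the cohomogeneity-one Ricci formulas turns $\Ric_g\ge 0$ into a system of ODE inequalities in $a,b,a',b',a'',b''$. The Euclidean cone ($\delta=1$) is the Ricci-flat equality case; decreasing $\delta$ produces a tangential gain of order $(1-\delta^2)r^{-2}$, which can absorb the negative radial contribution from the concavity of $a,b$ that is needed to cap off the core smoothly.

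The main obstacle is handling $\Gamma\not\subset SU(2)$. For such $\Gamma$ the Biquard--Ozuch obstructions forbid any Ricci-flat desingularization, so existing gravitational-instanton constructions cannot be quoted and the genuinely $\Ric\ge 0$ (non-Ricci-flat) regime must be used. Organizing the interpolation of $a,b$ in the transition region $r\approx R_0$ uniformly across the binary polyhedral cases and the orientation-reversing extensions, and making the quantitative choice of $\delta(\Gamma)$ explicit enough to ensure the Ricci inequalities hold throughout that transition, is where I expect the bulk of the technical difficulty to lie.
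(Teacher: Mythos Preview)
Your proposal has a genuine topological gap at the core. The Berger-sphere ansatz
\[
g = dr^2 + a(r)^2\,\sigma_1^2 + b(r)^2(\sigma_2^2+\sigma_3^2)
\]
closes up smoothly over a $2$-sphere only when the Hopf circle collapses with the correct derivative, which forces the boundary space form to be $L(n,1)=\mathbb{S}^3/\Gamma_{n,1,1}$ (the diagonal cyclic quotient). For a general lens space $L(n,p)$ with $1<p<n$ coprime to $n$, and for every non-cyclic $\Gamma<U(2)$, the smooth filling is the minimal resolution of $\mathbb{C}^2/\Gamma$, which contains a \emph{chain} (or tree) of exceptional curves; its level sets change topological type and it is not a single disk bundle. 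Collapsing instead to a point produces the orbifold $\mathbb{R}^4/\Gamma$, not a manifold. So ``a disk bundle adapted to the Seifert fibration, with an appropriate singular orbit'' does not exist as a smooth manifold in the cases that matter, and no amount of freedom in $\delta$ repairs this: the obstruction is topological, not curvature-theoretic.

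The paper confronts exactly this issue and resolves it by an \emph{inductive} surgery rather than a one-shot cohomogeneity-one construction. First it reduces $\Gamma<O(4)$ to $\Gamma<U(2)$ via a fixed-point-free unitary representation (Appendix~\ref{appspherical}). Then, for cyclic $\Gamma_{n,1,p}$, it replaces the round base $\mathbb{S}^2$ in the Hopf fibration by a warped metric $h_f=d\xi^2+f(\xi)^2 d\beta^2$ with $f'(0)=1$ but $f'(\tfrac{\pi}{2})=-p$ (Fact~\ref{factwarpedbergersphere}); this makes $\mathcal{O}(-n)/\Gamma_{p,0,1}$ smooth except at a \emph{single} residual conical point whose local group $\hat\Gamma$ has $|\hat\Gamma|=p<n$. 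The delicate work (Lemmas~\ref{lemmakappariccipositive}--\ref{addaconicalsingularpoint}) is to keep $\Ric\ge 0$ through this warping, to add a genuine conical neighbourhood at the residual point, and to use a Colding--Naber deformation plus Hamilton's Ricci flow to make the tangent cone there a round $C(\mathbb{S}^3_\epsilon/\hat\Gamma)$. One then recurses on $\hat\Gamma$. The non-cyclic case is reduced to the cyclic one by a first blow-up (Section~\ref{sectionsurgeryriccigeneral}). Your ODE analysis corresponds only to the easy part of one step of this induction (the region away from the residual singularity); the missing idea is the warped base $f$ with unequal boundary derivatives and the inductive gluing it enables.
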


Combining Theorem \ref{thmconstructionO4} with a recent work of Bru\`e-Pigati-Semola \cite{bps1}, one can classify the $4$-dimensional non-collapsed tangent cones in the topological sense.

Our construction relies on the following observation.

\begin{itemize}
    \item Fundamental groups of spherical $3$-manifolds have fixed point free $U(2)$-representations \cite{thur1}.
    \item There exists a surgery-based method for resolving the quotient singularities \cite{kl1}.
    \item From Perelman \cite{perel1}, Menguy \cite{menguy2}, to Colding-Naber \cite{coldingnaber2}, a large number of methods for constructing manifolds with non-negative Ricci curvature have been developed.
\end{itemize}

In fact, the subgroup in $O(4)$ acting freely on $\mathbb{S}^3$ must be conjugated to a subgroup of $U(2)$ in $O(4)$. Therefore, we can use the resolution of quotient singularities. Our key step is to estimate the lower bound of the Ricci curvature concerning the surgery version of the desingularization. Moreover, the manifold we construct is precisely the minimal resolution of $\mathbb{C}^2 /\Gamma $.

\begin{prop}\label{propconstructionU2}
Let $\Gamma $ be a finite subgroup of $U(2) \subset O(4)$ acting freely on the unit sphere $\mathbb{S}^3$, and $M$ be the minimal resolution of $\mathbb{C}^2 /\Gamma $. Then there exists a complete Riemannian metric $g$ on $M$ such that $\Ric_g \geq 0 $, and the manifold $(M,g)$ is asymptotic to the cone $C(\mathbb{S}_\delta^3 /\Gamma )$ for some $\delta = \delta (\Gamma ) >0$.
\end{prop}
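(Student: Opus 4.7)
My plan is to construct $g$ as a smooth interpolation between three model regions on $M$, exploiting the $S^1$-symmetry that exists because $\Gamma$ lies in $U(2)$. Let $\pi : M \to \mathbb{C}^2/\Gamma$ be the minimal resolution with exceptional divisor $E$, which is a Hirzebruch-Jung string of rational curves. Since $\Gamma$ commutes with at least the center $U(1) \subset U(2)$, both $M$ and $\mathbb{S}^3/\Gamma$ carry compatible $S^1$-actions, and the Hopf-type fibration of $\mathbb{S}^3/\Gamma$ over a $2$-orbifold extends to a Seifert-type fibration on a neighborhood of $E$. The three regions are: (i) an outer region $\{r \geq R\}$ identified with the cone $C(\mathbb{S}_\delta^3/\Gamma)$ outside a large ball; (ii) a neighborhood of $E$ equipped with a model metric on the resolution adapted to the Seifert structure; and (iii) a transition annulus on which $g$ is written as a doubly warped metric of the form $dr^2 + \phi(r)^2 g_{\mathrm{base}} + \psi(r)^2 d\theta^2$ over the base of the $S^1$-fibration.

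Using the warped-product and Riemannian-submersion Ricci formulas employed in Perelman \cite{perel1}, Menguy \cite{menguy2}, and Colding-Naber \cite{coldingnaber2}, the condition $\Ric_g \geq 0$ reduces in each region to second-order ODE inequalities on $\phi, \psi$ together with pointwise algebraic inequalities involving the O'Neill tensors of the fibration. In the outer region, the cone $C(\mathbb{S}_\delta^3/\Gamma)$ has $\Ric \geq 0$ precisely when $\delta \leq 1$, since the cross-sectional Ricci $2\delta^{-2} g$ must dominate the cone factor $n-2 = 2$. The crucial observation is that the positive contribution from the cross-section scales like $\delta^{-2}$, while the negative contributions from the warping and the O'Neill tensors remain $O(1)$; taking $\delta = \delta(\Gamma)$ small enough therefore absorbs the latter into the former throughout the transition region, which is exactly the role played by the small cone angle $\delta$ in the statement.

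The main obstacle, I expect, is the inner region, where $\psi$ must degenerate along $E$ so that the $S^1$-fibers close up smoothly and the nontrivial topology of the resolution is produced. There the Ricci tensor acquires potentially negative mixed terms from the horizontal-vertical interaction of the Seifert fibration, and these must be balanced against the positive interior curvature along $E$. The surgery framework of Kapovitch-Lytchak \cite{kl1} provides the right template for the smooth collapse, and the Perelman-Menguy machinery yields the convexity inequalities needed for $\Ric_g \geq 0$; verifying that the two are compatible, which should reduce to an explicit one-variable differential inequality near each component of the Hirzebruch-Jung string, is what I expect to be the most delicate step, and is the reason the size of the cross-sectional sphere must be allowed to depend on the combinatorics of $\Gamma$.
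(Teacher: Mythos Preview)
Your proposal has the right flavor---warped Berger-type metrics, circle symmetry, a small cross-section to absorb negative curvature contributions---but it misses the structural idea that makes the paper's argument work. The paper does \emph{not} build a metric on the full minimal resolution in one pass. It argues by \emph{induction on $|\Gamma|$}, mirroring the Hirzebruch--Jung algorithm one blow-up at a time: at each step one constructs a metric on the single-blow-up space $\mathcal{O}(-n)/\Gamma_{p,0,1}$ that is exactly conical at infinity over $\mathbb{S}^3_\delta/\Gamma$ and exactly conical near the one remaining singular point over $\mathbb{S}^3_\epsilon/\hat\Gamma$ with $|\hat\Gamma|<|\Gamma|$ (Propositions~\ref{propinductioncyclic} and \ref{propinductioncyclicdetailedver}). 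The inductive hypothesis then supplies a cap for the smaller cone, and one glues. The cone angle $\delta$ is not a single global parameter chosen small to dominate everything; it is the output of this iteration, shrinking at every step, which is why $\delta=\delta(\Gamma)$ depends on the combinatorics.

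The concrete gap in your plan is the ``inner region.'' For a general cyclic $\Gamma$ the exceptional divisor $E$ is a \emph{chain} of rational curves with different self-intersections, and a single doubly warped metric $dr^2+\phi(r)^2 g_{\mathrm{base}}+\psi(r)^2\,d\theta^2$ over one $S^1$-fibration cannot model a neighborhood of the whole chain: the circle fiber must collapse, with different multiplicities, along \emph{each} component, and the behavior at the \emph{nodes} where two curves meet is genuinely two-dimensional, not a one-variable ODE. The paper never confronts this, because it never sees more than one curve at a time; the delicate work (Lemmas~\ref{lemmakappariccipositive}--\ref{addaconicalsingularpoint} and Proposition~\ref{propositionlocalproductstructure}) is to arrange that after a single blow-up the metric near the new singular point is again an exact round cone, so the induction can restart. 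Your all-at-once scheme would need a new idea at the nodes that you have not supplied. (Incidentally, \cite{kl1} here is Lamotke's book on resolution of isolated singularities, furnishing the algebraic picture of Section~\ref{sectionblowup}; it is not a Kapovitch--Lytchak surgery reference and offers no Riemannian template.)
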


\begin{rmk}
Similar to the examples by Perelman \cite{perel1} and Menguy \cite{menguy2}, for each $\Gamma$, one can see that there exist infinitely many complete Riemannian manifolds with non-negative Ricci curvature that are asymptotic to $C(\mathbb{S}_\delta^3 /\Gamma)$. Moreover, the manifolds we constructed are diffeomorphic to the resolutions of $\mathbb{C}^2 /\Gamma$. See also the remark below the proof of Theorem \ref{thmconstructionO4}.
\end{rmk}

 Now, let's discuss potential connections between our result and the case of Einstein metrics. For any finite subgroup $\Gamma < O(4)$ acting freely on the unit sphere $\mathbb{S}^3$, one can define
 \begin{eqnarray}
     \delta_\Gamma & = & \sup \left\{ \delta > 0 : \textrm{ $ \exists\; (M^4,g)$ with $\Ric_g \geq 0$ that is asymptotic to $C(\mathbb{S}_\delta^3 /\Gamma )$. } \right\}
 \end{eqnarray}
Then $\delta_\Gamma \leq 1 $, and Theorem \ref{thmconstructionO4} is equivalent to the condition $\delta_\Gamma >0$ holds for any finite subgroup $\Gamma < O(4)$ acting freely on the unit sphere $\mathbb{S}^3$. 

Furthermore, a Ricci flat manifold with Euclidean volume growth is always asymptotic to $C(S^3_1/\Gamma)$ for some $\Gamma<O(4)$. Thus, by results from Kronheimer \cite{krhm1}, \c Suvaina \cite{sus1}, and Wright \cite{wright1}, we know that when $\Gamma$ satisfies one of the following conditions, $\delta_\Gamma=1$: 
\begin{enumerate}[\rm (I).]
    \item $\Gamma<SU(2)$,
    \item $\Gamma = \left< \left(\begin{array}{cc}
e^{\frac{2\pi \sqrt{-1}}{dn^2} } & 0 \\
0 & e^{\frac{2(dmn -1)\pi \sqrt{-1}}{dn^2} }
\end{array}\right) \right> < U(2) $ for $d ,m \geq 1 $, $n\geq 2$ and $n,m$ relatively prime. 
\end{enumerate}
To the best of the author's knowledge, examples of Ricci flat manifolds with Euclidean volume growth beyond those mentioned are currently unknown, and their existence remains a long-standing open question in differential geometry.

Recently, Liu \cite{lg1} has proven, under some additional assumptions, that when a manifold $(M,g)$ has non-negative Ricci curvature and one of its asymptotic cones is Ricci flat, then $\Ric_g =0$. This has inspired us to pose the following question, which might be considered as a quantitative version of the question about the existence of Ricci flat $4$-manifolds with Euclidean volume growth.

\begin{question}
Let $\Gamma <O(4)$ be a discrete group acting freely on $\mathbb{S}^3$. 
\begin{itemize}
    \item Can one calculate $\delta_\Gamma $ for a given $\Gamma$? In particular, for which $\Gamma$ does $\delta_\Gamma =1$?
    \item If $\delta_\Gamma =1$, is there a Ricci flat manifold $(M^4,g)$ asymptotic to $C(S^3_1/\Gamma)$?
\end{itemize}
\end{question}

This paper is organized as follows. At first, we recall the resolution of quotient singularities on complex surfaces in Section \ref{sectionblowup}. In Section \ref{sectionpreliminaries}, we collect some formulas related to computing the Ricci curvature. Then we will establish the resolution surgery in the cyclic case and non-cyclic case in Section \ref{sectionsurgeryriccicyclic} and Section \ref{sectionsurgeryriccigeneral}, respectively. Finally, the proofs of Theorem \ref{thmconstructionO4} and Proposition \ref{propconstructionU2} are contained in Section \ref{sectionproofmainthm}. For convenience, we recall the existence of fixed point free $U(2)$-representations of spherical $3$-manifold in Appendix \ref{appspherical}.

\textbf{Acknowledgement.} The author wants to express his deep gratitude to Professor Gang Tian for constant encouragement. The author would also be grateful to Professor Wenshuai Jiang and Professor Aaron Naber for helpful discussions and comments. Additionally, the author thanks Daniele Semola for his interests in this note.

\section{Resolution of quotient singularities on complex surfaces}
\label{sectionblowup}

In this section, we recall the resolution of the quotient singularity $0$ on the orbifold $\mathbb{C}^2 /\Gamma$, where $\Gamma \leq U(2)$ acts free on the unit sphere $\mathbb{S}^3$. See also \cite[Chapter IV, \S 5 - \S 9]{kl1} for more details.

We begin by describing the basic surgery in the progress of resolving the quotient singularities. Consider the natural projection $\proj :\mathbb{C}^2 \to\mathbb{C}P^1$. Then the holomorphic line bundle $\mathcal{O}(-1) \to \mathbb{C}P^1 $ is given by $\cup_{x\in\mathbb{C}P^1} \proj^{-1} (x) \to \mathbb{C}P^1 $. Hence the total space of $\mathcal{O}(-1)$ is
\begin{equation*}
\mathrm{Total} (\mathcal{O}(-1)) = \cup_{x\in\mathbb{C}P^1} \proj^{-1} (x) = \left\{ (z_1 ,z_2 , [w_1 ,w_2]) \in \mathbb{C}^2 \times \mathbb{C}P^1 : (z_1 ,z_2 ) \in [w_1 ,w_2] \right\}.
\end{equation*}
Write $\hat{\mathbb{C}}^2 = \mathrm{Total} (\mathcal{O}(-1)) $. Then $(z_1 ,z_2 , [w_1 ,w_2]) \mapsto (z_1 ,z_2 )$ gives a holomorphic map $\blowup : \hat{\mathbb{C}}^2 \to \mathbb{C}^2 $. Moreover, $\blowup^{-1} (0) $ is a holomorphic submanifold of $\hat{\mathbb{C}}^2 $, and $\blowup $ induces a biholomorphic map $\hat{\mathbb{C}}^2 \sq \blowup^{-1} (0) \to \mathbb{C}^2 \sq \{ 0 \} $. Since $\Gamma\leq U(2)$, one can see that the action of $\Gamma $ on $\mathbb{C}^2$ can be lifted to a unique action of $\Gamma$ on $\hat{\mathbb{C}}^2$ such that the action of $\Gamma $ and the map $\blowup$ commute. Let $\hat{\mathbb{C}}^2_\Gamma = \hat{\mathbb{C}}^2 /\Gamma $. Now we discuss the desingularization in two cases according to whether $\Gamma $ is a cyclic group.

\subsection{The case of cyclic group.}
Since $\Gamma $ is cyclic, one can assume that $\Gamma $ is generated by $\gamma_0 $, and 
$$\gamma_0 (z_1 ,z_2) = \left( e^{\frac{2\pi \sqrt{-1}}{n} } z_1 ,e^{\frac{2p\pi \sqrt{-1}}{n} } z_2 \right) ,\;\; \forall (z_1 ,z_2) \in \mathbb{C}^2 ,$$ 
where $1\leq p\leq n -1 $ is an integer coprime with $n$. One can see that $\Gamma $ acts free on $\mathbb{S}^3$.

Consider the holomorphic map $\tilde{f}: \mathbb{C}^2 \to \mathbb{C}^2 $ given by $(z_1 ,z_2) \mapsto (z_1 ,z_2^p ) $, one can obtain a biholomorphic map $f:\mathbb{C}^2 /\Gamma' \to \mathbb{C}^2 /\Gamma $, where $\Gamma' $ is generated by $\gamma_1' $, and 
$$\gamma_1' (z_1 ,z_2) = \left( e^{\frac{2\pi \sqrt{-1}}{n} } z_1 ,e^{\frac{2\pi \sqrt{-1}}{n} + \frac{2\pi \sqrt{-1}}{p} } z_2 \right) ,\;\; \forall (z_1 ,z_2) \in \mathbb{C}^2 .$$ 
Note that the map $z\mapsto z^p$ shows that $\mathbb{C} $ is biholomorphic to $ \mathbb{C}/\mathbb{Z}_p $.

Then we can also defines the space $\hat{\mathbb{C}}^2_{\Gamma'}$ as above, although the action of $\Gamma'$ on $\mathbb{S}^3$ is not free. By a direct calculation, one can see that $(z_1 ,z_2 ,[w_1 ,w_2]) \in \hat{\mathbb{C}}^2 $ is a fixed-point for some $\gamma' \in \Gamma' $ if and only if $z_2 = 0$. Let $U_i = \{ (z_1 ,z_2 ,[w_1 ,w_2]) \in \hat{\mathbb{C}}^2 : w_i \neq 0 \} $. Clearly, $U_i$ is invariant under the action of $\Gamma'$. Now we discuss the action of $\Gamma'$ on $U_i$.
\begin{itemize}
\item Since $w_1 \neq 0 $ on $U_1$, we have a natural biholomorphic map $ \varphi_1 : \mathbb{C}^2 \to U_1 \subset \hat{\mathbb{C}}^2 $ given by $(z,w) \mapsto (z ,zw ,[1 ,w]) $. Then $\gamma_1' (z,zw) = \left( e^{\frac{2\pi \sqrt{-1}}{n} } z ,e^{\frac{2\pi \sqrt{-1}}{n} + \frac{2\pi \sqrt{-1}}{p} } zw \right) $ on $ \mathbb{C}^2 \sq \{ 0 \} \subset \hat{\mathbb{C}}^2$ implies that $ \varphi^{-1}_1 \gamma_1' \varphi_1 (z, w) = \left( e^{\frac{2\pi \sqrt{-1}}{n} } z ,e^{ \frac{2\pi \sqrt{-1}}{p} } w \right) $. Hence $ U_1 / \Gamma' \cong \mathbb{C} /\mathbb{Z}_n \times \mathbb{C} /\mathbb{Z}_p \cong \mathbb{C}^2 $ as complex varieties.
\item Since $w_2 \neq 0 $ on $U_2$, we have a natural biholomorphic map $ \varphi_2 : \mathbb{C}^2 \to U_2 \subset \hat{\mathbb{C}}^2 $ given by $(z,w) \mapsto (zw ,z  ,[w ,1]) $. Then $\gamma_1' (zw,z) = \left( e^{\frac{2\pi \sqrt{-1}}{n} } zw ,e^{\frac{2\pi \sqrt{-1}}{n} + \frac{2\pi \sqrt{-1}}{p} } z \right) $ on $ \mathbb{C}^2 \sq \{ 0 \} \subset \hat{\mathbb{C}}^2$ implies that $ \varphi^{-1}_2 \gamma_1' \varphi_2 (z, w) = \left( e^{\frac{2\pi \sqrt{-1}}{n} + \frac{2\pi \sqrt{-1}}{p} } z ,e^{ \frac{-2\pi \sqrt{-1}}{p} } w \right) $. By the holomorphic map $(z,w) \mapsto (z^n ,w) $, we see that $ U_2 / \Gamma' \cong \mathbb{C}^2 / \hat{\Gamma} $ as complex varieties, where $\hat{\Gamma}' $ is generated by $ \hat{\gamma}_1 (z, w) = \left( e^{ \frac{2n\pi \sqrt{-1}}{p} } z ,e^{ \frac{-2\pi \sqrt{-1}}{p} } w \right) $.
\end{itemize}
It follows that the space $\hat{\mathbb{C}}^2_{\Gamma'}$ has only one singular point $(0,0,[0,1])$ as complex space, and an open neighborhood of $(0,0,[0,1])$ is biholomorphic to an open subset of $\mathbb{C}^2 /\hat{\Gamma} $. Note that $|\hat{\Gamma}| = p < n = | \Gamma | $, so we can apply this surgery again and again to the new singular points, and this process ends after a finite number of steps.

\subsection{The case of non-cyclic group.}
Now we consider the case where $\Gamma$ is not a cyclic group. Now we consider the space $\hat{\mathbb{C}}^2_{\Gamma}$. Let $H\leq \Gamma $ be an abelian subgroup. Then the elements in $H$ have a common characteristic vector $v_H \in\mathbb{C}^2 $. Since the action of $\Gamma $ on the unit sphere is free, the restriction of the action of $H$ on $\mathbb{C} v_H $ gives an embedding $H\to U(1) $, and hence $H$ is cyclic. Then we can assume that $\Gamma_{cent} = \Gamma \cap Z(U(2)) $ is generated by
$$\gamma_Z (z_1 ,z_2) = \left( e^{\frac{2\pi \sqrt{-1}}{n} } z_1 ,e^{\frac{2\pi \sqrt{-1}}{n} } z_2 \right) ,\;\; \forall (z_1 ,z_2) \in \mathbb{C}^2 ,$$ 
where $Z(U(2)) = \left\{ e^{\theta \sqrt{-1} } I_2 ,\; \; \theta \in\mathbb{R} \right\} $ is the center of $U(2)$. 

Write $\Gamma_S = \left\{  \gamma \in \Gamma : \gamma S=S \right\} $, where $S$ is a subset of $ \hat{\mathbb{C}}^2 $. Then for any $x=(0,0,[w_1 ,w_2])$, we have $\Gamma_x = \Gamma_{\proj^{-1} (x)} $. Choose unit vectors $v_1 \in \proj^{-1} (x) $ and $v_2 \perp \proj^{-1} (x)$. Hence we have
$$ \Gamma_{\proj^{-1} (x)} \leq \left\{  \gamma\in\Gamma :\;\; \gamma (v_1) = \left( e^{\alpha \sqrt{-1}} v_1 , e^{\beta \sqrt{-1}} v_2 \right) ,\; \alpha ,\beta\in\mathbb{R} \right\} .$$
Without loss of generality, we can assume that $v_1 = (1,0)$, $v_2 = (0,1)$, and $\Gamma_{\proj^{-1} (x)} $ is generated by 
$$\gamma_x (z_1 ,z_2) = \left( e^{\frac{2\pi \sqrt{-1}}{nq} } z_1 ,e^{\frac{2k\pi \sqrt{-1}}{nq} } z_2 \right) ,\;\; \forall (z_1 ,z_2) \in \mathbb{C}^2 ,$$
where $n|k-1 $, $q \nmid k-1 $ and $k$ is coprime with $q$. Since $\Gamma_{\proj^{-1} (x)} $ is a cyclic group, one can apply the argument in the cyclic case to show that $x$ is a cyclic type singular point of $\hat{\mathbb{C}}^2_{\Gamma}$. Then this process reduces to the case where $\Gamma $ is a cyclic group.

\begin{rmk}
In Appendix \ref{appspherical}, we will see that the non-cyclic group $\Gamma$ we actually consider always satisfies $\Gamma_{cent} = \Gamma \cap Z(U(2)) \neq 0 $, so there is no $(-1)$-curve on the complex surface we finally get. So the complex surface finally get is the minimal resolution of $\mathbb{C}^2 /\Gamma$. See also \cite[Section III-6]{bhpv1}.
\end{rmk}

\section{Preliminaries}\label{sectionpreliminaries}
For the convenience of calculations, we list here some formulas related to computing the Ricci curvature in this section. These formulas are all well-known.

\settocdepth{part}
\subsection{ Riemannian submersion with totally geodesics fibers}
We first recall the formulas for calculating the Ricci curvature related to Riemannian submersion with totally geodesics fibers. Let $(M,g_M)$, $(B,g_B)$ be Riemannian manifolds, and $\pi : M\to B$ be a Riemannian submersion. Assume that for any $b\in B$, the fiber $F_b = \pi^{-1} (b) $ is totally geodesic in $M$.

Let $U,V\in\mathcal{V} =TF\subset TM $ be vertical vector fields on $M$, and $X,Y\in\mathcal{H} =\mathcal{V}^\perp \subset TM $ be horizontal vector fields on $M$. Then one can define a tensor on $M$ by
$$ A( X + U, Y+ V) =  (\nabla_X Y)^\mathcal{V} + (\nabla_X V)^\mathcal{H} ,$$
where $ E^\mathcal{H} $ and $ E^\mathcal{V} $ be the components of $E\in TM$ corresponding to the orthonormal decomposition $TM=\mathcal{H}\oplus \mathcal{V} $. Let $g^\mathcal{H}_M$ and $g^\mathcal{V}_M$ denote the restriction of $g_M$ on $ \mathcal{H}$ and $ \mathcal{V} $, respectively. For any $t>0$, we consider the canonical variation, $g^t_M = g^\mathcal{H}_M + t g^\mathcal{V}_M$, of $g_M$. It is easy to see that the fibers $F_b$ are also totally geodesic in $(M,g_M^t)$. Then we have the following property.

\begin{prop}[{\cite[Proposition 9.70]{be1}}]\label{proptotallygeodesiccalculation}
Let $(M,g_M)$, $(B,g_B)$ be Riemannian manifolds, and $\pi : M\to B$ be a Riemannian submersion. Assume that for any $b\in B$, the fiber $F_b = \pi^{-1} (b) $ is totally geodesic in $M$. Then
\begin{eqnarray}
\Ric_{g^t_M} (U,V) & = & \Ric_{g^\mathcal{V}_M} (U,V) + t^2 g_M (AU,AV) ,\\
\Ric_{g^t_M} (U,X) & = & tg_M ( {\rm div}_{g_B} A(X) ,U ) ,\\
\Ric_{g^t_M} (X,Y) & = & \Ric_{g_B} (X,Y) - 2 g_M (A(X,\cdot ) ,A(Y,\cdot )) ,
\end{eqnarray}
where we regard $\Ric_{g_B}$ and $\Ric_{g^\mathcal{V}_M}$ as tensors on $M$, and $A$ is the tensor corresponding to $\pi : (M,g_M) \to (B,g_B) $ as defined above.
\end{prop}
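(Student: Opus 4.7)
The plan is to derive the three Ricci formulas from O'Neill's fundamental equations for a Riemannian submersion together with the simplification afforded by the totally geodesic fibers hypothesis, and then to track how the canonical variation $g_M \mapsto g^t_M$ rescales each of the resulting contractions.

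First, I would recall O'Neill's formulas, which express the Riemann curvature tensor of $(M,g_M)$ in terms of the curvature tensor of the base $(B,g_B)$, the intrinsic curvature of the fibers $F_b$, and two invariants: the horizontal tensor $A$ (which measures the failure of $\mathcal{H}$ to be integrable) and the vertical tensor $T$ (which is the second fundamental form of the fibers). The assumption that every $F_b$ is totally geodesic is precisely the condition $T \equiv 0$, so O'Neill's identities collapse to expressions involving only $R_B$, the intrinsic fiber curvatures, and $A$ together with its covariant derivatives. Tracing these identities against a local orthonormal frame adapted to the splitting $TM = \mathcal{H}\oplus \mathcal{V}$ yields the $t=1$ case of the three stated formulas; this is exactly the content of the relevant computations in \cite{be1}.

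Second, I would pass from $g_M$ to the canonical variation $g^t_M = g_M^{\mathcal{H}} + t g_M^{\mathcal{V}}$. The key structural observation is that the Levi-Civita connection of $g^t_M$ still has $\mathcal{H}$ as its horizontal distribution and still renders the fibers totally geodesic, so the abstract form of O'Neill's identities applies verbatim. What changes is the underlying inner product: if $\{e_\alpha\}$ is a $g_M^{\mathcal{V}}$-orthonormal vertical frame, then $\{t^{-1/2} e_\alpha\}$ is a $g^t_M$-orthonormal frame for $\mathcal{V}$. Expressing $A$ and its horizontal divergence with respect to $g^t_M$ in terms of the $g_M$-versions introduces explicit powers of $t$, and collecting these powers in each of the three Ricci traces produces the factor $t^2$ in the $(U,V)$ identity and the factor $t$ in the mixed $(U,X)$ identity, while the pure-horizontal case receives no extra factor because horizontal norms are unchanged.

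The computation itself is a direct but slightly tedious bookkeeping of how orthonormal frames, index raising, and covariant differentiation scale under the conformal rescaling $g_M^{\mathcal{V}} \mapsto t g_M^{\mathcal{V}}$. The main pitfall to be careful about is to fix the convention once and for all that $A$ is defined using the Levi-Civita connection of $g_M$ (not of $g^t_M$), so that all $t$-dependence is traced to metric contractions rather than hidden inside $A$. Since the result is classical and is precisely Proposition 9.70 of \cite{be1}, the exposition will ultimately quote Besse rather than reproduce this bookkeeping in full.
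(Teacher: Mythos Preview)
Your proposal is correct and aligns with the paper's treatment: the paper does not give an independent proof of this proposition at all, but simply states it as a citation of \cite[Proposition 9.70]{be1}. Your sketch of the underlying argument via O'Neill's formulas with $T\equiv 0$ and the rescaling bookkeeping for the canonical variation is accurate, and your decision to ultimately defer to Besse matches exactly what the paper does.
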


\begin{rmk}
Let $\{ X_i \} $ be an orthonormal basis of $\mathcal{H}_x \subset T_x M$. Then we have
\begin{eqnarray}
g_M (AU,AV) & = & \sum_i g_M (A(X_i ,U), A(X_i ,V) ) ,\\
g_M (A(X,\cdot ) ,A(Y,\cdot )) & = & \sum_i g_M (A(X ,X_i), A(Y ,X_i) ),\\
 {\rm div}_{g_B} A(X) & = & \sum_{i} \nabla_{X_i} A(X_i ,X) .
\end{eqnarray}
\end{rmk}

\subsection{Curvature equations for distance functions}
Now we recall the formulas for calculating the curvature equations related to distance functions. Let $(M,g_M)$ be a Riemannian manifold, $U$ be an open subset of $M$ and $r : U\to \mathbb{R}$ be a smooth function such that $g_M (\nabla r,\nabla r)=1$. In this case, we say that $r$ is a distance function on $U$. Let $X,Y,Z,W \in TM $ be vector fields on $M$, and $S(X) = \nabla_X \nabla r $. Then ${\rm Hess} r (X,Y) = g_M (S(X) ,Y) $. Let ${\rm Hess}^2 r $ denotes the tensor ${\rm Hess}^2 r (X,Y) = g_M (S^2(X) ,Y) = g_M (S(X),S(Y)) $, and let $H=r^{-1} (t) $ denotes the level set for some $t\in r(U)$. Then the second fundamental form ${\rm II}_H = {\rm Hess} r |_{TH \times TH} $. Hence we have the following equations.
\begin{prop}[{\cite[Theorem 3.2.2-3.2.5]{pp1}}]\label{propcurvaturedistancefunction}
Let $(M,g_M)$ be a Riemannian manifold, $U$ be an open subset of $M$, $r : U\to \mathbb{R}$ be a smooth function such that $g_M (\nabla r,\nabla r)=1$, and $H=r^{-1} (t) $ for some $t\in r(U)$. Let $g_H$ denotes the restriction of $g$ on $H$. Then
\begin{eqnarray}
\mathrm{R}_{g_M} (X,Y,Z,W) & = & \mathrm{R}_{g_H} (X,Y,Z,W) - {\rm II}_H (X,W) {\rm II}_H (Y,Z) + {\rm II}_H (X,Z) {\rm II}_H (Y,W) ,\\
\mathrm{R}_{g_M} ( X,Y,Z,\frac{\partial}{\partial r} ) & = & - ( \nabla_X {\rm II}_H ) (Y,Z) + ( \nabla_Y {\rm II}_H ) (X,Z) ,\\
\mathrm{R}_{g_M} ( \frac{\partial}{\partial r},X,Y,\frac{\partial}{\partial r} ) & = & {\rm Hess}^2 r (X,Y) - ( L_{\frac{\partial}{\partial r}} {\rm Hess} r ) (X,Y) ,
\end{eqnarray}
where $X,Y,Z,W \in TH \subset TM $ be vector fields on $H$.
\end{prop}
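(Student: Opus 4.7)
Since $r$ is a distance function, the integral curves of $\nabla r$ are unit-speed geodesics, so $\nabla_{\partial_r} \partial_r = 0$ (writing $\partial_r = \nabla r$ for brevity). For vector fields $X, Y \in TH$, the Weingarten decomposition
\begin{equation*}
\nabla^M_X Y = \nabla^H_X Y + {\rm II}_H (X,Y) \partial_r
\end{equation*}
holds, where $\nabla^H$ is the induced Levi-Civita connection on $H$. All three identities follow from systematic application of this decomposition together with $\nabla_{\partial_r}\partial_r = 0$.

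For the first identity (Gauss equation), plug the Weingarten decomposition into $R_{g_M}(X,Y)Z = \nabla^M_X \nabla^M_Y Z - \nabla^M_Y \nabla^M_X Z - \nabla^M_{[X,Y]} Z$ with $X, Y, Z \in TH$ and pair with $W \in TH$. The purely tangential terms reassemble into $R_{g_H}(X,Y,Z,W)$, while differentiating the coefficients of the normal part of each $\nabla^M_X Y$ produces the ${\rm II}$-${\rm II}$ correction via $g_M(\nabla^M_X \partial_r, W) = {\rm II}_H(X,W)$. For the second identity (Codazzi equation), take the same expansion and pair instead with $\partial_r$; after using torsion-freeness to cancel the $\nabla^H_X Y - \nabla^H_Y X - [X,Y]$ contributions, only the tangential derivatives of the coefficients ${\rm II}_H(\cdot,\cdot)$ survive and yield $-(\nabla_X {\rm II}_H)(Y,Z) + (\nabla_Y {\rm II}_H)(X,Z)$.

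For the third identity, the hypothesis $\nabla_{\partial_r}\partial_r = 0$ combined with a direct expansion of $R$ yields the Riccati-type relation $R_{g_M}(X, \partial_r)\partial_r = -(\nabla_{\partial_r} S)(X) - S^2(X)$, so
\begin{equation*}
R_{g_M}(\partial_r, X, Y, \partial_r) = -g_M((\nabla_{\partial_r} S)X, Y) - {\rm Hess}^2 r(X,Y) .
\end{equation*}
The general identity $(L_V T)(X,Y) - (\nabla_V T)(X,Y) = T(\nabla_X V, Y) + T(X, \nabla_Y V)$ (a consequence of torsion-freeness), applied to $V = \partial_r$ and $T = {\rm Hess}\, r$, gives
\begin{equation*}
(L_{\partial_r} {\rm Hess}\, r)(X,Y) = g_M((\nabla_{\partial_r} S)X, Y) + 2\, {\rm Hess}^2 r(X,Y) ,
\end{equation*}
and substituting back produces precisely the stated identity.

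The whole proof is standard bookkeeping in Riemannian submanifold geometry. The main subtleties are the sign conventions for $R_{g_M}$ and the distinction between covariant and Lie derivatives of $(0,2)$-tensors; once these are fixed, every step reduces to an application of the Leibniz rule and $\nabla_{\partial_r}\partial_r = 0$. The hard part is not the computation itself but ensuring consistency with whichever curvature sign convention is carried through the rest of the paper.
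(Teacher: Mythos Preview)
Your proposal is correct and follows the standard textbook argument; the paper itself does not give a proof of this proposition but simply cites it from Petersen \cite[Theorem 3.2.2--3.2.5]{pp1}, where the derivation is exactly along the lines you outline (Gauss--Codazzi from the Weingarten decomposition, and the radial equation from the Riccati identity together with the Lie--vs.--covariant derivative comparison). There is nothing further to compare.
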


\subsection{Double warped products}

Let $\varphi ,\phi $ be smooth nonnegative functions on $[0,\infty )$ such that
\begin{itemize}
\item $\varphi ,\phi $ are positive on $(0,\infty )$,
\item $\phi (0) >0 $, $ \phi^{\mathrm{(odd)}} (0) =0 $,
\item $ \varphi (0) =0 $, $ \varphi' (0)= 1 $, $ \varphi^{\mathrm{(even)}} (0) =0 $.
\end{itemize}
Then we can define a Riemannian metric on $ \mathbb{R}^{m+1} \times \sphere^n $ by
$$ g_{\varphi ,\phi } (r) = dr^2 + \varphi (r)^2 g_{\sphere^m} +  \phi (r)^2 g_{\sphere^n} .$$
See also \cite[Proposition 1.4.7]{pp1} for more details. Let $X_0 =\frac{\partial }{\partial r} $, $X_1 \in T_{\sphere^m} $, and $X_2 \in T_{\sphere^n} $ be unit vectors. Then the Ricci curvature of $g_{\varphi ,\phi }$ can be expressed as following. 

\begin{prop}\label{propwarpedproductriccicurvature}
Let $\varphi ,\phi $ and $ g_{\varphi , \phi }$ be as above. Then the Ricci curvature tensor of $ g_{\varphi , \phi }$ can be determined by
\begin{eqnarray}
\Ric_{g_{\varphi , \phi } } \left(X_0 \right) & = & - \left( m \frac{\varphi''}{\varphi} + n \frac{\phi''}{\phi} \right) X_0 ,\\
\Ric_{g_{\varphi , \phi } } \left(X_1 \right) & = & \left[-  \frac{\varphi''}{\varphi} + (m-1)\frac{1-(\varphi')^2 }{\varphi^2} - n \frac{\varphi' \phi'}{\varphi \phi } \right] X_1 ,\\
\Ric_{g_{\varphi , \phi } } \left(X_2 \right) & = & \left[-  \frac{\phi''}{\phi} +(n-1) \frac{1-(\phi')^2 }{\phi^2} - m \frac{\varphi' \phi'}{\varphi \phi } \right] X_2 .
\end{eqnarray}
\end{prop}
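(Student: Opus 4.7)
The plan is to apply Proposition \ref{propcurvaturedistancefunction} with the radial coordinate $r$ as the distance function on $(0,\infty)\times\sphere^m\times\sphere^n$; indeed $g_{\varphi,\phi}(\nabla r,\nabla r)=1$, and each level set $H_r=\{r\}\times\sphere^m\times\sphere^n$ is a Riemannian product with metric $\varphi(r)^2 g_{\sphere^m}+\phi(r)^2 g_{\sphere^n}$, whose sectional curvatures and intrinsic Ricci tensor are immediate from the product structure and the standard round sphere formulas.

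The first step is to compute the shape operator $S=\nabla\nabla r$ of $H_r$ using the identity $L_{\partial_r}g_M=2\,\mathrm{Hess}\,r$. Differentiating the explicit form of the metric in $r$ shows that $S$ is block-diagonal with respect to the $\sphere^m/\sphere^n$ splitting of $TH_r$, acting as multiplication by $\varphi'/\varphi$ on the $\sphere^m$-factor and by $\phi'/\phi$ on the $\sphere^n$-factor. Plugging this into the third identity of Proposition \ref{propcurvaturedistancefunction} (and using $\mathrm{Hess}^2 r(X,X)=g_M(S(X),S(X))$) yields the radial sectional curvatures $\mathrm{R}_{g_M}(X_0,X_1,X_1,X_0)=-\varphi''/\varphi$ and $\mathrm{R}_{g_M}(X_0,X_2,X_2,X_0)=-\phi''/\phi$ for unit $X_1\in T\sphere^m$ and $X_2\in T\sphere^n$. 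Tracing over an orthonormal basis of $TH_r$ then gives the stated formula for $\Ric(X_0)$.

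Next I would apply the Gauss equation (the first identity of Proposition \ref{propcurvaturedistancefunction}) to compute the horizontal contribution to $\Ric_{g_M}(X_1,X_1)$. In the chosen splitting $\mathrm{II}_{H_r}$ is diagonal, so the only nonvanishing pieces come from pairs $(X_1,Y_\alpha)$ with $Y_\alpha=X_1$, collapsing the Gauss correction to $-(m-1)(\varphi'/\varphi)^2-n(\varphi'\phi')/(\varphi\phi)$; combining with the intrinsic contribution $\Ric_{g_{H_r}}(X_1,X_1)=(m-1)/\varphi^2$ and the radial term $-\varphi''/\varphi$ computed above and regrouping produces the stated expression. The formula for $\Ric(X_2)$ follows by the symmetry $(\varphi,m)\leftrightarrow(\phi,n)$.

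The argument is entirely routine; the only care needed is to distinguish unit vectors of $g_M$ from unit vectors of the round sphere metrics so that the powers of $\varphi$ and $\phi$ appear correctly in the shape operator and the Gauss correction, but there is no substantive obstacle.
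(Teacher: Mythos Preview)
Your outline is correct and complete: the shape operator is block-diagonal with eigenvalues $\varphi'/\varphi$ and $\phi'/\phi$, the radial equation of Proposition~\ref{propcurvaturedistancefunction} gives the sectional curvatures $-\varphi''/\varphi$ and $-\phi''/\phi$ in the $\partial_r$-direction, and the Gauss equation together with the product structure of the level set $H_r$ produces the remaining terms exactly as you wrote. The one point you leave implicit is the vanishing of the off-diagonal Ricci components (so that $X_0,X_1,X_2$ really are eigenvectors of the Ricci endomorphism); this follows at once from the block-diagonality of $\mathrm{II}_{H_r}$ (hence $\nabla\mathrm{II}$ has no mixed components, so the Codazzi identity gives $\mathrm{R}_{g_M}(X,Y,Z,\partial_r)=0$) and from the product structure of $H_r$, both of which you have already noted.

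As for comparison with the paper: there is nothing to compare. Proposition~\ref{propwarpedproductriccicurvature} is stated in the preliminaries section without proof, under the blanket remark that ``these formulas are all well-known.'' Your derivation via Proposition~\ref{propcurvaturedistancefunction} is the natural one and is in the spirit of how the paper organizes its curvature computations elsewhere (cf.\ the calculation below Fact~\ref{factwarpedbergersphere}, which follows the same template).
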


\resettocdepth
\section{Resolution surgery with \texorpdfstring{$\Ric \geq 0$}{Lg} : the cyclic case}
\label{sectionsurgeryriccicyclic}

In this section, we will give a resolution surgery with $\Ric \geq 0$ in the cyclic case. Now we state this surgery as following.

\begin{prop}\label{propinductioncyclic}
Let $\Gamma $ be a non-trivial finite cyclic subgroup of $U(2) \subset O(4)$ acts free on the unit sphere $\mathbb{S}^3$. Then for any $\epsilon >0$, we can find a finite cyclic subgroup $\hat{\Gamma}$ of $U(2) $ acts free on the unit sphere $\mathbb{S}^3$ and a pointed metric space $(\mathcal{X},d,x)$ satisfying the following properties:
\begin{itemize}
\item $|\hat{\Gamma}| < |\Gamma |$,
\item The metric subspace $(\mathcal{X}\sq \{x\} ,d)$ is isometric to a Riemannian manifold $(M,g)$ with nonnegative Ricci curvature,
\item The asymptotic cone of $(\mathcal{X},d)$ is isometric to $C(\mathbb{S}^3_{\delta } /\Gamma )$ for some $\delta =\delta (\Gamma ,\epsilon ) >0$,
\item The metric ball $(B_1 (x) ,d)$ is isometric to $B_1 (o) \subset ( C(\mathbb{S}^3_{\epsilon } /\hat{\Gamma} ) ,o)$.
\end{itemize}
\end{prop}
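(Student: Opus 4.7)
The plan is to take $\mathcal{X}$ to be the complex-analytic resolution $\hat{\mathbb{C}}^2_{\Gamma'}$ described in Section~\ref{sectionblowup}, with distinguished point $x$ the unique remaining quotient singularity $(0,0,[0,1])$, around which $\mathcal{X}$ is modelled on $\mathbb{C}^2/\hat{\Gamma}$ with $|\hat{\Gamma}|=p<n=|\Gamma|$. Outside a large compact set, $\mathcal{X}$ is identified via $\blowup$ and the biholomorphism $f$ with the complement of a compact set in $\mathbb{C}^2/\Gamma$, giving the correct topological type to admit a conical end over $\mathbb{S}^3/\Gamma$. The metric $g$ will be defined piecewise in three nested regions: an inner region where $g$ is identically the cone metric $dr^2+r^2 g_{\mathbb{S}^3_\epsilon/\hat{\Gamma}}$ on $B_1(x)$, an outer region where $g$ is identically the cone $dr^2+r^2 g_{\mathbb{S}^3_\delta/\Gamma}$ beyond some large radius, and an intermediate annular region containing the exceptional $\mathbb{CP}^1$ in which $g$ interpolates between the two cones while keeping $\Ric_g\geq 0$. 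The first and fourth bullets of the proposition will then be immediate, and the third bullet will follow because the metric is identically conical at infinity.

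On the intermediate annulus, I will exploit the diagonal Hopf $U(1)$-action on $\mathbb{C}^2$, which commutes with $\Gamma'$ and so descends to an isometric $U(1)$-action on $\mathcal{X}$. Away from the exceptional curve and $x$ this action is locally free, giving a Riemannian submersion with totally geodesic one-dimensional fibers to which Proposition~\ref{proptotallygeodesiccalculation} applies. Along a radial coordinate $r$ the metric will take the form
\begin{equation*}
g \;=\; dr^2 \;+\; \varphi(r)^2\, g_B \;+\; \phi(r)^2\, \theta\otimes\theta,
\end{equation*}
where $\theta$ is a connection one-form dual to the Hopf fibers and $g_B$ is a chosen metric on the Hopf-base orbifold $\mathbb{S}^2$. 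The $A$-tensor is carried by the Hopf curvature two-form, and combining Proposition~\ref{proptotallygeodesiccalculation} with Proposition~\ref{propwarpedproductriccicurvature} produces explicit inequalities for each eigenvalue of $\Ric_g$ in terms of $\varphi,\phi,\varphi',\phi',\varphi''$, and $\phi''$.

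The task is then to choose $\varphi$ and $\phi$ so that each of these inequalities holds and the boundary matching with the two conical pieces is $C^\infty$. Near the inner boundary, $\varphi$ and $\phi$ are prescribed by the slopes of the cone $C(\mathbb{S}^3_\epsilon/\hat{\Gamma})$ in Hopf-adapted coordinates; near the outer boundary, analogously for $C(\mathbb{S}^3_\delta/\Gamma)$. On the interior of the annulus, $\varphi$ is taken concave (securing $-\varphi''/\varphi\geq 0$), $\phi$ is kept small relative to $\varphi$ (keeping the horizontal correction $-2g_M(A(X,\cdot),A(Y,\cdot))$ dominated), and $\delta=\delta(\Gamma,\epsilon)$ is chosen small enough that the transition can be performed over a long radial annulus with ample slack, in the spirit of Perelman \cite{perel1} and Menguy \cite{menguy2}.

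The main obstacle is the topological transition encoded by the exceptional curve. Across the annulus the Hopf fiber must change its length from the value dictated by $\mathbb{S}^3_\delta/\Gamma$ to that dictated by $\mathbb{S}^3_\epsilon/\hat{\Gamma}$, forcing $\phi$ to traverse a definite ratio governed by $n/p$, and the profiles $\varphi,\phi$ must extend smoothly across the exceptional $\mathbb{CP}^1$. Simultaneously, the mixed term $-\varphi'\phi'/(\varphi\phi)$ in Proposition~\ref{propwarpedproductriccicurvature} and the horizontal $A$-correction contribute non-positively and must be dominated by the remaining positive terms arising from the concavity of $\varphi$ and the smallness of $\phi$. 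Carrying out this balance while keeping the construction $C^\infty$ at the three interfaces is the heart of the argument; once achieved, the piecewise metric yields a smooth $g$ on $\mathcal{X}$ satisfying all four required properties.
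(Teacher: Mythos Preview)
Your proposal contains a genuine geometric gap. The singular point $x=(0,0,[0,1])$ lies \emph{on} the exceptional curve $E$, so your three regions cannot be arranged as you describe: $B_1(x)$ already contains a disk of $E$, and the ``intermediate annulus'' between $\partial B_1(x)\cong\mathbb{S}^3/\hat{\Gamma}$ and the sphere at infinity $\mathbb{S}^3/\Gamma$ is not a product cobordism (the two boundary components have different fundamental groups). Consequently the ansatz $g=dr^2+\varphi(r)^2 g_B+\phi(r)^2\,\theta\otimes\theta$ with a single radial variable cannot be imposed on that region. The only natural radial coordinate for this warped-Berger form is the distance to $E$; with that choice the metric closes up along all of $E$ at $r=0$, the point $x$ is not singled out, and the tangent cone at $x$ is a product of a flat $\mathbb{R}^2$ with a $2$-cone rather than the required round $C(\mathbb{S}^3_\epsilon/\hat{\Gamma})$.

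The paper's construction confronts exactly this and needs substantially more than a one-variable interpolation. One first builds the warped-Berger metric $g_{\rho,\phi,f}$ with a carefully designed non-round base metric $h_f$ on $\mathbb{S}^2$ (Lemma~\ref{lemmakappariccipositive} is devoted to producing $f$) so that $\Ric\geq 0$ holds and the metric is smooth off $x$; here $r$ is distance to $E$. Near $x$ one then switches to a $\mathbb{T}^2$-invariant chart centered at $x$, rewrites the metric as a quotient of a product of two warped surfaces, and performs a separate two-part modification (Proposition~\ref{propositionlocalproductstructure} and Lemma~\ref{addaconicalsingularpoint}) to install a genuine conical singularity at $x$ whose link satisfies $\Ric>2$. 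Finally, neither this link nor the link at infinity is yet round, and a further stage (Lemma~\ref{lmmcoldingnaberexamplelemma} combined with an explicit volume-preserving family of metrics for the inner end and Hamilton's Ricci flow on $3$-manifolds for the outer end) is used to deform both links to round spheres while keeping $\Ric\geq 0$. None of these three stages fits into your one-variable warped-product picture, and they are where the real work in the proof lies.
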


Topologically, the space in this proposition in just the space given in Section \ref{sectionblowup}, and all we have to do is to construct metrics with $\Ric \geq 0 $ on this space.

\subsection{Warped Berger spheres}
Let us begin with construct a Riemannian metric on the complex orbifold $\hat{\mathbb{C}}^2_{\Gamma'}$ in Section \ref{sectionblowup} such that $\hat{\mathbb{C}}^2_{\Gamma'}$ becomes a smooth Riemannian manifold outside the topological singular point $(0,0,[0,1])$. Recall that $\Gamma' < U(2) $ is generated by 
$$\gamma_1' = \left(\begin{array}{cc}
e^{\frac{2\pi \sqrt{-1}}{n} } & 0 \\
0 & e^{\frac{2 \pi \sqrt{-1}}{n} + \frac{2 \pi \sqrt{-1}}{p} }
\end{array}\right) , $$ 
and $p$ is coprime to $n$. Hence
$$\Gamma' = \left<\left(\begin{array}{cc}
e^{\frac{2\pi \sqrt{-1}}{n} } & 0 \\
0 & e^{\frac{2 \pi \sqrt{-1}}{n} }
\end{array}\right) \right> \oplus \left<\left(\begin{array}{cc}
1 & 0 \\
0 & e^{\frac{2 \pi \sqrt{-1}}{p} }
\end{array}\right) \right> = \Gamma_{n,1,1} \oplus \Gamma_{p,0,1} , $$ 
where $\Gamma_{n,k,l}$ is the cyclic group generated by $\left(\begin{array}{cc}
e^{\frac{2k\pi \sqrt{-1}}{n} } & 0 \\
0 & e^{\frac{2 l\pi \sqrt{-1}}{n} }
\end{array}\right) $.

Note that $\hat{\mathbb{C}}^2_{\Gamma_{n,1,1}} $ is biholomorphic to $\mathrm{Total} (\mathcal{O}(-n))$, the total space of the holomorphic line bundle on $\mathbb{C}P^1 \cong \mathbb{S}^2 $ with Euler number $-n$. Here a holomorphic line bundle is just a plane bundle with holomorphic structure. A classical way to construct smooth Riemannian metrics on $\mathrm{Total} (\mathcal{O}(-n))$ is use the Berger sphere \cite[Exercise 1.6.23]{pp1}. This is very useful in the construction of several interesting examples of non-negative Ricci curvature \cite{coldingnaber2, hnw1, menguy2, perel1}.

But we are considering Riemannian metrics on $\mathrm{Total} (\mathcal{O}(-n)) /\Gamma_{p,0,1} $. By abuse of notation, we use the same notation $\mathcal{O}(-n)$ for the holomorphic line bundle and its total space. If we directly use the Riemannian metrics on $\mathcal{O}(-n) $ constructed by the Berger sphere, then the complex line $\proj^{-1} ([1,0]) =\{ (z,0,[1,0]) : z\in\mathbb{C} \} \subset \hat{\mathbb{C}} $ gives a metric singular set on $( (\mathcal{O}(-n)) /\Gamma_{p,0,1} ) \sq \{(0,0,[0,1]) \}$. A natural idea is to modify the metric on Berger sphere. Now we will describe how to construct a metric on $ \mathcal{O}(-n) /\Gamma_{p,0,1} $ by warping the metric on Berger sphere.

Now we consider the coordinate:
\begin{equation*}
F:\left\{
\begin{aligned}
\relax \left( 0,\frac{\pi}{2} \right) \times ( 0,2\pi )\times (0,2\pi ) \;\;\;\;& \longrightarrow \;\;\;\;\;\;\;\;\;\;\;\;\;\;\;\;\;\;\;\;\;\;\;\; \mathbb{S}^3 ,\\
(\xi ,\alpha ,\beta)  \;\;\;\;\;\;\;\;\;\;\;\;\;\;\;\; &\longmapsto \;\;\;\; \left( \sin\xi e^{\alpha \sqrt{-1}} , \cos \xi e^{(\alpha +\beta )\sqrt{-1}} \right) , \\
\end{aligned}
\right.  
\end{equation*}
and the Hopf fibration $\pi_{Hopf} (z_1 ,z_2) = \left( z_1 \bar{z}_2 , \frac{1}{2} (|z_1|^2 -|z_2|^2 ) \right) $. It follows that 
$$\pi_{Hopf} \circ F (\xi ,\alpha ,\beta) = \left( \frac{\sin (2\xi) e^{-\beta\sqrt{-1}}}{2} , \frac{\cos (2\xi) }{2} \right) .$$ 
Clearly, $\pi_{Hopf}$ is a Riemannian submersion. Moreover, the basic horizontal lift of $D\pi_{Hopf} ( \frac{\partial}{\partial \xi} ) $ and $D\pi_{Hopf} (\frac{\partial}{\partial \beta}) $ is equal to $\frac{\partial}{\partial \xi}$ and $\frac{\partial}{\partial \beta} -\cos^2 \xi \frac{\partial}{\partial \alpha} $, respectively. Hence the horizontal distribution $\mathcal{H}_{Hopf} =\left< \frac{\partial}{\partial \xi} , \frac{\partial}{\partial \beta} -\cos^2 \xi \frac{\partial}{\partial \alpha} \right> $, and for any smooth Riemannian metric $h$ on $\mathbb{S}^2$, the pullback $\pi_{Hopf}^* h$ gives a smooth metric on the vector bundle given by horizontal distribution. 

Then we can construct a smooth metric as following.

\begin{fact}\label{factwarpedbergersphere}
Let $t>0$ be a constant, $f$ be a smooth non-negative function on $\left[ 0,\frac{\pi}{2} \right]$ and $\rho, \phi$ be non-negative smooth functions on $\left[ 0,\infty \right)$. Assume that:
\begin{itemize}
\item $f>0$ on $\left( 0,\frac{\pi}{2} \right)$, $f'(0)=1$, $f'\left( \frac{\pi}{2} \right) = -p$, $f(0)=f^{(even)} (0)=f\left( \frac{\pi}{2} \right)=f^{(even)} \left( \frac{\pi}{2} \right) =0 $,
\item $\rho>0$ on $\left( 0,\infty \right)$, $\rho'(0)=n$, $\rho (0)=\rho^{(even)} (0) = 0 $,
\item $\phi>0$ on $\left[  0,\infty \right)$, $ \rho^{(odd)} (0) = 0 $.
\end{itemize}
Then we have the following metrics.
\begin{itemize}
    \item The metric on $ \left( 0,\frac{\pi}{2} \right) \times ( 0,2\pi )\times (0,2\pi )  $,
$$g_{ f,t} = t^2 d\alpha^2 + \pi^*_{Hopf} h_f ,$$
gives a smooth Riemannian metric on $\mathbb{S}^3 /\Gamma_{n,1,p} $, where $h_f = d\xi^2 + f(\xi)^2 d\beta^2 $ is a metric on $\mathbb{S}^2$ corresponding to the coordinate $\pi_{Hopf} \circ F$.
    \item The metric on $(0,\infty ) \times \left( 0,\frac{\pi}{2} \right) \times ( 0,2\pi )\times (0,2\pi )  $,
$$g_{\rho ,\phi,f} = dr^2 + \rho(r)^2 d\alpha^2 +\phi(r)^2 \pi^*_{Hopf} h_f ,$$
gives a smooth Riemannian metric on $(\mathcal{O}(-n) /\Gamma_{p,0,1} ) \sq \{(0,0,[0,1]) \}$, where $h_f = d\xi^2 + f(\xi)^2 d\beta^2 $ is a metric on $\mathbb{S}^2$ corresponding to the coordinate $\pi_{Hopf} \circ F$.
\end{itemize}

\end{fact}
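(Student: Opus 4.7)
The plan is to verify smoothness of each metric by working in local charts adapted to the coordinate-singular loci of $(\xi,\alpha,\beta)$ (respectively $(r,\xi,\alpha,\beta)$) and then to apply the standard polar-to-Cartesian criterion for warped products, namely Petersen \cite[Proposition 1.4.7]{pp1}. On the interior of the given coordinate domain, smoothness and positive-definiteness are immediate from $f,\rho,\phi > 0$ there and from the Riemannian-submersion description of $\pi_{Hopf}$; the listed boundary conditions on $f,\rho,\phi$ are designed precisely to produce smooth extensions across the degenerate loci $\xi = 0$, $\xi = \pi/2$, and $r = 0$.

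For $g_{f,t}$, the metric is visibly invariant under $\Gamma_{n,1,p}$ (which acts only by translating $\alpha$ and $\beta$), so it suffices to exhibit smooth local charts on the quotient around each of the two circles $\{z_1 = 0\}$ and $\{z_2 = 0\}$. Near $\{z_1 = 0\}$, introduce the smooth chart $(z_1 = \sin\xi\, e^{\sqrt{-1}\alpha},\, \arg z_2)$ on $\mathbb{S}^3$; the conditions $f(0) = 0$, $f'(0) = 1$, $f^{(\mathrm{even})}(0) = 0$, combined with the identification of the Hopf fibre at $\xi = 0$ with the $\arg z_2$-circle, make the warped block $d\xi^2 + f(\xi)^2 d\beta^2$ combine with $t^2 d\alpha^2$ into a smooth Cartesian-type metric in the $z_1$-plane times a smooth $\arg z_2$-circle. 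Near $\{z_2 = 0\}$ the base metric $h_f$ develops a cone of angle $2\pi p$, so $h_f$ alone does not extend smoothly to $\mathbb{S}^2$; smoothness of the total space is recovered only on the quotient, because the $\mathbb{Z}_n$-identification of $\arg z_1$ coupled with the slope $f'(\pi/2) = -p$ unwraps this cone into a smooth transverse plane, producing the standard smooth model of the lens space $L(n,p)$.

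For $g_{\rho,\phi,f}$, the region $r > 0$ reduces to the first bullet applied fiberwise, with $\rho(r)$ playing the role of $t$ and $\phi(r)$ rescaling $h_f$; in this region the excluded point $(0,0,[0,1])$ is never reached. The new verification is at $r = 0$, which approaches the exceptional $\mathbb{CP}^1 \subset \mathcal{O}(-n)/\Gamma_{p,0,1}$. Transversally, $(r,\alpha)$ form a polar chart in the line-bundle fibre; the conditions $\rho(0) = 0$, $\rho'(0) = n$, $\rho^{(\mathrm{even})}(0) = 0$ convert these to smooth Cartesian fibre coordinates, the factor $n$ accounting for the $\mathbb{Z}_n$ identification of $\alpha$ intrinsic to $\mathcal{O}(-n) = \hat{\mathbb{C}}^2/\Gamma_{n,1,1}$ and hence for the Euler number $-n$. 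The condition $\phi^{(\mathrm{odd})}(0) = 0$ makes $\phi(r)^2$ a smooth even function of $r$, i.e.\ a smooth function of $|z|^2$ on the fibre, so $\phi(r)^2 \pi_{Hopf}^* h_f$ extends smoothly across the zero section.

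The hard step is the verification at $\xi = \pi/2$ in the first bullet: the cone angle $2\pi p$ must be shown to be cancelled precisely by the $\mathbb{Z}_n$ lens-space identification in the Hopf fibre direction, which uses $\gcd(n,p) = 1$ essentially. This is a direct coordinate computation in the chart $(z_2 = \cos\xi\, e^{\sqrt{-1}(\alpha+\beta)},\, \arg z_1)$, mirroring the classical construction of $\mathcal{O}(-n)$ from a warped Berger $\mathbb{S}^3$, modified by the twist $f'(\pi/2) = -p$. Every other step is a routine application of the polar smoothness criterion.
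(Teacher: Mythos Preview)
The paper states this as a Fact without proof, so there is no paper argument to compare against; I evaluate your outline on its own.

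Your overall strategy---check smoothness at each coordinate-degenerate locus via the polar--Cartesian criterion---is correct, and your treatment of $\xi=0$ and of $r=0$ is fine. The gap is at $\xi=\pi/2$.

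You claim the cone angle $2\pi p$ is cancelled by ``the $\mathbb{Z}_n$-identification of $\arg z_1$.'' It is not. Since $\Gamma_{n,1,p}$ acts \emph{freely} on $\mathbb{S}^3$, the quotient map is a local diffeomorphism near $\{z_2=0\}$; a meridian disc $\{\alpha=\alpha_0\}$, with polar coordinates $(\eta,\beta)$ and $\beta\in[0,2\pi)$, maps injectively to $\mathbb{S}^3/\Gamma_{n,1,p}$ and still carries the metric $d\eta^2+p^2\eta^2\,d\beta^2+O(\eta^4)$, hence still has cone angle $2\pi p$. The $\mathbb{Z}_n$-action shortens the \emph{core circle} (the $\alpha$-direction), not the transverse angular period, so it cannot resolve the cone.

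What actually cancels the cone is the $\Gamma_{p,0,1}$-quotient appearing in the second bullet: the generator sends $\beta\mapsto\beta+2\pi/p$, so on $\mathcal{O}(-n)/\Gamma_{p,0,1}$ the $\beta$-period becomes $2\pi/p$ and $d\eta^2+p^2\eta^2\,d\beta^2$ closes up smoothly. For the first bullet, the same mechanism operates once one realises that the chart is being identified with $\mathbb{S}^3/\Gamma_{n,1,p}$ \emph{not} via $F$ followed by the $\Gamma_{n,1,p}$-quotient, but via the $\Gamma'=\Gamma_{n,1,1}\oplus\Gamma_{p,0,1}$-quotient together with the diffeomorphism of underlying manifolds $\mathbb{S}^3/\Gamma'\cong\mathbb{S}^3/\Gamma_{n,1,p}$ coming from $\mathbb{C}^2/\Gamma'\cong\mathbb{C}^2/\Gamma$. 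At $\xi=\pi/2$ the stabiliser in $\Gamma'$ is exactly $\Gamma_{p,0,1}\cong\mathbb{Z}_p$, acting on the transverse disc, and this is what unwraps the cone. Your remark that $\gcd(n,p)=1$ is essential is correct but misplaced: coprimality is what forces the $\Gamma'$-stabiliser at $\xi=0$ to be trivial (so that $f'(0)=1$ suffices there), not what fixes $\xi=\pi/2$.
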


Now we calculate the Ricci curvature of $g_{ f,t}$ and $g_{\rho ,\phi,f} $. Since we will calculate $\Ric_{g_{\rho ,\phi,f} }$ using the formulas in Proposition \ref{propcurvaturedistancefunction}, which includes $\Ric_{g_{f, \frac{\rho^2}{\phi^2} } }$, we will begin by calculating $g_{ f,t}$. 

Let $U=\frac{\partial}{\partial \alpha}$, $X= \frac{\partial}{\partial \xi} $, and $Y= \frac{1}{f} \left( \frac{\partial}{\partial \beta} - \cos^2 \xi \frac{\partial}{\partial \alpha} \right) $. Then $\{ U,X,Y \}$ is an orthonormal basis on $ \left( \left( 0,\frac{\pi}{2} \right) \times ( 0,2\pi )\times (0,2\pi ) ,g_{f,1} \right) $. By direct calculation, we obtain:
\begin{eqnarray}
 \nabla^{g_{f,1}}_U U=  \nabla^{g_{f,1}}_X X =0, & & \nabla^{g_{f,1}}_U X=  \nabla^{g_{f,1}}_X U =-\frac{\sin (2\xi )}{2f(\xi)} Y, \nonumber \\
  \nabla^{g_{f,1}}_U Y=  \nabla^{g_{f,1}}_Y U =\frac{\sin (2\xi )}{2f(\xi)} X, & & \nabla^{g_{f,1}}_X Y= - \nabla^{g_{f,1}}_Y X =\frac{\sin (2\xi )}{2f(\xi)} U - \frac{f'(\xi)}{2f (\xi)} Y, \nonumber \\
  \nabla^{g_{f,1}}_Y Y = - \frac{f'(\xi)}{2f (\xi)} X & & A(X,Y) = -A(Y,X) = \frac{\sin (2\xi )}{2f(\xi)} U ,\\
  A(X,U) = -\frac{\sin (2\xi )}{2f(\xi)} Y , & & A(Y,U) = \frac{\sin (2\xi )}{2f(\xi)} X ,\nonumber \\
  {\rm div}_{h_f} A(X ) = 0 , & & {\rm div}_{h_f} A(Y ) = \frac{\partial}{\partial \xi} \left( \frac{\sin (2\xi )}{2f(\xi)} \right) U .\nonumber
\end{eqnarray}

By Proposition \ref{proptotallygeodesiccalculation}, we have
\begin{eqnarray}
 \Ric_{g_{f,t}} (U,U) & = & t^2 g_{f,1} (AU,AU) = 2t^2 \left( \frac{\sin (2\xi )}{2f(\xi)} \right)^2 ,\\
 \Ric_{g_{f,t}} (U,X) & = & t g_{f,1} ( {\rm div}_{h_f} A(X ) , U ) =0,\\
 \Ric_{g_{f,t}} (U,Y) & = & t g_{f,1} ( {\rm div}_{h_f} A(Y ) , U ) =t \frac{\partial}{\partial \xi} \left( \frac{\sin (2\xi )}{2f(\xi)} \right) ,\\
 \Ric_{g_{f,t}} (X,X) & = & \Ric_{h_f} (D\pi_{Hopf} (X) , D\pi_{Hopf} (X) ) -2t g_{f,1} ( A_X ,A_X ) \\
 & = &  -\frac{f''(\xi)}{f(\xi)} - 2t \left( \frac{\sin (2\xi )}{2f(\xi)} \right)^2 ,\nonumber\\
 \Ric_{g_{f,t}} (X,Y) & = & \Ric_{h_f} (D\pi_{Hopf} (X) , D\pi_{Hopf} (Y) ) -2t g_{f,1} ( A_X ,A_Y ) = 0 ,\\
 \Ric_{g_{f,t}} (Y,Y) & = & \Ric_{h_f} (D\pi_{Hopf} (Y) , D\pi_{Hopf} (Y) ) -2t g_{f,1} ( A_Y ,A_Y ) \\
 & = & -\frac{f''(\xi)}{f(\xi)} - 2t \left( \frac{\sin (2\xi )}{2f(\xi)} \right)^2 .\nonumber
\end{eqnarray}

Then we calculate the Ricci curvature of $g_{\rho ,\phi,f} $. By definition, we have
\begin{eqnarray}
 {\rm Hess}r & = & \frac{1}{2} L_{\frac{\partial}{\partial r}} g = \rho \rho' d\alpha^2 + \phi \phi' \pi^*_{Hopf} h_f , \nonumber \\
 {\rm Hess}^2 r & = & (\rho' )^2 d\alpha^2 + (\rho' )^2 \pi^*_{Hopf} h_f ,\\
 L_{\frac{\partial}{\partial r}} {\rm Hess}r & = & \left( \rho \rho'' + (\rho' )^2 \right) d\alpha^2 + \left( \phi \phi'' + (\phi' )^2 \right) \pi^*_{Hopf} h_f .\nonumber
\end{eqnarray}
Hence Proposition \ref{propcurvaturedistancefunction} implies that
\begin{eqnarray}
 \Ric_{g_{\rho ,\phi,f} } (\frac{\partial}{\partial r} ,\frac{\partial}{\partial r}) & = & -\frac{\rho''}{\rho} - 2\frac{\phi''}{\phi} ,\\
 \Ric_{g_{\rho ,\phi,f} } (\frac{\partial}{\partial r} ,U) & = & \Ric_{g_{\rho ,\phi,f} } (\frac{\partial}{\partial r} ,X) = \Ric_{g_{\rho ,\phi,f} } (\frac{\partial}{\partial r} ,Y) =0,\\
 \Ric_{g_{\rho ,\phi,f} } (U ,U) & = & \Ric_{g_{f, \frac{\rho^2}{\phi^2} } } (U ,U) - 2\frac{\rho \rho' \phi' }{\phi} - \rho \rho'' = 2\frac{\rho^4}{\phi^4} \left( \frac{\sin (2\xi )}{2f(\xi)} \right)^2 - 2\frac{\rho \rho' \phi' }{\phi} - \rho \rho'' , \\
 \Ric_{g_{\rho ,\phi,f} } (U ,X) & = & \Ric_{g_{f, \frac{\rho^2}{\phi^2} } } (U ,X) =0 ,\\
 \Ric_{g_{\rho ,\phi,f} } (U ,Y) & = & \Ric_{g_{f, \frac{\rho^2}{\phi^2} } } (U ,Y) = \frac{\rho^2}{\phi^2} \frac{\partial}{\partial \xi} \left( \frac{\sin (2\xi )}{2f(\xi)} \right) ,\\
 \Ric_{g_{\rho ,\phi,f} } (X ,X) & = & \Ric_{g_{f, \frac{\rho^2}{\phi^2} } } (X ,X) - \frac{\rho'\phi\phi'}{\rho} - \phi\phi'' -(\phi')^2 \\
 & = & -\frac{f''(\xi)}{f(\xi)} - 2\frac{\rho^2}{\phi^2} \left( \frac{\sin (2\xi )}{2f(\xi)} \right)^2 - \frac{\rho'\phi\phi'}{\rho} - \phi\phi'' -(\phi')^2  ,\nonumber\\
 \Ric_{g_{\rho ,\phi,f} } (X ,Y) & = & \Ric_{g_{f, \frac{\rho^2}{\phi^2} } } (X ,Y) =0 , \\
 \Ric_{g_{\rho ,\phi,f} } (Y ,Y) & = & \Ric_{g_{f, \frac{\rho^2}{\phi^2} } } (Y ,Y) - \frac{\rho'\phi\phi'}{\rho} - \phi\phi'' -(\phi')^2 \\
 & = & -\frac{f''(\xi)}{f(\xi)} - 2\frac{\rho^2}{\phi^2} \left( \frac{\sin (2\xi )}{2f(\xi)} \right)^2 - \frac{\rho'\phi\phi'}{\rho} - \phi\phi'' -(\phi')^2  .\nonumber
\end{eqnarray}

\begin{rmk}
    By letting $f(\xi) = \frac{\sin (2\xi )}{2} $, we can obtain the formulas for the Ricci curvature of the standard Berger spheres. We will use these formulas in Section \ref{sectionsurgeryriccigeneral}.
\end{rmk}

\subsection{Adding edge singularities}
Now we need to find suitable functions $\rho,\phi$ and $f$. In this step, we will construct metrics look like the Riemannian metric with edge singularity along the exceptional divisor $\blowup^{-1} (0) $, so that we can modify the metric near the topological singular point obtained in this step.

At first, we need the following lemma.

\begin{lmm}\label{lemmakappariccipositive}
For any given $\tau \in (0,\frac{1}{10}) $, there exists a constant $\xi_0\in (0,\frac{\tau}{10})$ satisfying the following property. For any $\kappa \geq 2$, we can find a smooth non-negative function $f_\kappa$ on $[0,\frac{\pi}{2}]$ and constants $t_\kappa ,\epsilon_\kappa \in (0,\frac{1}{10})$ such that
\begin{enumerate}[\rm (i).]
    \item $f_\kappa(\xi) = \frac{\sin (\kappa \xi )}{\kappa} $ on $[0,\frac{\xi_0}{\kappa }]$,
    \item $f_\kappa>0 >f_\kappa'' $ on $(0,\frac{\pi}{2})$, and $f_\kappa =c\sin (2\xi ) $ on $[\tau , \frac{\pi}{2} -\tau ]$ for some constant $c>0$,
    \item $f_\kappa'(\frac{\pi}{2}) = -p $, $f_\kappa(\frac{\pi}{2}) = f_\kappa^{ (even)}(\frac{\pi}{2}) = 0 $,
    \item For any $t\in (0,t_\kappa )$, $\Ric_{g_{{f_\kappa},t}} \geq \epsilon_\kappa \left( t^2 d\alpha^2 + \pi_{Hopf}^* h_{f_\kappa} \right) $, where $h_{f_\kappa} = d\xi^2 + f_\kappa (\xi)^2 d\beta^2 $ is a metric on $\mathbb{S}^2$ corresponding to the coordinate $\pi_{Hopf} \circ F$.
\end{enumerate}
\end{lmm}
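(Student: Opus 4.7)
The plan is to build $f_\kappa$ piecewise and then verify (iv) via the formulas for $\Ric_{g_{f,t}}$ computed in the preceding subsection. Fix $\xi_0\in(0,\tau/10)$ depending only on $\tau$, choose $c_\kappa\in(0,1/2)$ depending on $\kappa$, and take
\[
    f_\kappa(\xi)=\begin{cases}\sin(\kappa\xi)/\kappa,&\xi\in[0,\xi_0/\kappa],\\ F_\kappa(\xi),&\xi\in[\xi_0/\kappa,\tau],\\ c_\kappa\sin(2\xi),&\xi\in[\tau,\pi/2-\tau],\\ G_\kappa(\xi),&\xi\in[\pi/2-\tau,\pi/2],\end{cases}
\]
where $F_\kappa, G_\kappa$ are smooth strictly concave interpolants: $F_\kappa$ matches $\sin(\kappa\xi)/\kappa$ and $c_\kappa\sin(2\xi)$ to infinite order at the endpoints of $[\xi_0/\kappa,\tau]$, while $G_\kappa$ matches $c_\kappa\sin(2\xi)$ at $\xi=\pi/2-\tau$ and satisfies $G_\kappa(\pi/2)=G_\kappa^{(\mathrm{even})}(\pi/2)=0$, $G_\kappa'(\pi/2)=-p$. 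The existence of such interpolants is routine once $c_\kappa$ is chosen small enough that the secant slope between the two endpoints lies strictly between the two prescribed boundary slopes (which forces $c_\kappa<\cos(\xi_0)/(4\cos(2\tau))$ and an analogous condition near $\pi/2$); one prescribes $f_\kappa'$ as a smoothed, monotonically decreasing piecewise-affine function and integrates, guaranteeing strict concavity.

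Specialized to the $g_{f_\kappa,t}$-orthonormal frame $(U/t,X,Y)$, the Ricci formulas from the preceding subsection give
\[
    \mathrm{Mat}(\Ric_{g_{f_\kappa,t}})=\begin{pmatrix}2B&0&C\\0&A-2tB&0\\C&0&A-2tB\end{pmatrix},
\]
with $A=-f_\kappa''/f_\kappa$, $B=(\sin(2\xi)/(2f_\kappa))^2$, and $C=\partial_\xi(\sin(2\xi)/(2f_\kappa))$. The matrix inequality $\Ric_{g_{f_\kappa,t}}\ge\epsilon_\kappa g_{f_\kappa,t}$ requested in (iv) is thus equivalent to
\[
    2B\ge\epsilon_\kappa,\qquad A-2tB\ge\epsilon_\kappa,\qquad (2B-\epsilon_\kappa)(A-2tB-\epsilon_\kappa)\ge C^2.
\]
So the whole argument reduces to establishing, uniformly on $[0,\pi/2]$, the strict pointwise inequality $2AB>C^2$ with a positive gap $\eta_\kappa$. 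Given this, one picks $t_\kappa$ so small that the correction $-2tB$ is negligible and then $\epsilon_\kappa$ small by continuity.

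The main technical step is therefore to prove $2AB-C^2\ge\eta_\kappa>0$ on each piece. On $[0,\xi_0/\kappa]$ a direct Taylor expansion of $\sin(2\xi)/(2\sin(\kappa\xi)/\kappa)$ yields $A=\kappa^2$, $B=1+O((\kappa\xi)^2)$ and $C=O(\kappa^2\xi)$, so for $\xi\le\xi_0/\kappa$ we get $2AB-C^2\ge 2\kappa^2\bigl(1-O(\xi_0^2)\bigr)\ge\kappa^2$ once $\xi_0$ is chosen sufficiently small (this is the step that fixes $\xi_0$, independently of $\kappa$). On $[\tau,\pi/2-\tau]$ we have $A=4$, $B=1/(4c_\kappa^2)$ and $C=0$, giving $2AB-C^2=2/c_\kappa^2$. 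The analogous estimate near $\xi=\pi/2$ uses $\sin(2\xi)/(2f_\kappa)\to 1/p$, and one chooses $G_\kappa''<0$ with suitable decay so that $A$ extends continuously and positively to $\pi/2$.

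The hard step is the transition region $[\xi_0/\kappa,\tau]$ (and its south-pole analogue). There $A$ and $B$ are continuous and strictly positive on a compact interval by the concavity and positivity of $F_\kappa$, so $2AB$ has a positive minimum; the only subtlety is to keep $|C|$ under that minimum. Since $C$ is proportional to the $\xi$-derivative of $\sin(2\xi)/(2f_\kappa)$, it can be made uniformly small by stretching the transition and choosing $f_\kappa'$ to vary slowly and monotonically from $\cos(\xi_0)$ to $2c_\kappa\cos(2\tau)$. This flexibility — which is precisely the content of the interpolation construction in paragraph one — yields $2AB-C^2\ge\eta_\kappa>0$ throughout the transition, completing the verification of (i)–(iv).
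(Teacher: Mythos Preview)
Your reduction to the scalar inequality $2AB>C^2$ (equivalently $(2\cot(2\xi)-f'/f)^2<-2f''/f$) and the piecewise verification strategy match the paper's approach. The endpoint estimates on $[0,\xi_0/\kappa]$ and on $[\tau,\pi/2-\tau]$ are fine, and your observation that $\xi_0$ can be fixed independently of $\kappa$ from the Taylor expansion on $[0,\xi_0/\kappa]$ is correct.

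The gap is in the transition region $[\xi_0/\kappa,\tau]$ (and its mirror near $\pi/2$). Your argument there reads: ``$2AB$ has a positive minimum; keep $|C|$ under that minimum by stretching the transition and choosing $f_\kappa'$ to vary slowly.'' This is circular. The interval $[\xi_0/\kappa,\tau]$ has fixed length $\approx\tau$, so no stretching is available. More seriously, making $f_\kappa'$ vary slowly forces $|f_\kappa''|$ to be small, which drives $A=-f_\kappa''/f_\kappa$ toward zero and collapses the very lower bound on $2AB$ you are trying to protect. And $C=\partial_\xi\bigl(\sin(2\xi)/(2f_\kappa)\bigr)$ cannot be made uniformly small either: the ratio $\sin(2\xi)/(2f_\kappa)$ runs from $\approx 1$ at $\xi_0/\kappa$ to $1/(2c_\kappa)$ at $\tau$, so its derivative is forced to be of order $1/(c_\kappa\tau)$ somewhere. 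The three quantities $A,B,C$ are all coupled through the single unknown $f_\kappa$; you cannot tune them independently as your last paragraph suggests.

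The paper resolves this with an explicit ansatz on the transition: $\hat f(\xi)=(\mathrm{const})\,\sin(2\xi)\,\xi^{-\alpha}$ for a small exponent $\alpha=\alpha(\xi_0,\kappa)$ determined by $C^1$-matching at $2\xi_0/\kappa$. The point of this profile is that $2\cot(2\xi)-\hat f'/\hat f=\alpha/\xi$ exactly, so the left side of the key inequality is $\alpha^2/\xi^2$, and one can balance it against the $\xi^{-2}$ contribution in $\hat f''/\hat f$ by direct computation, obtaining the inequality with a uniform margin. The piece near $\pi/2$ is then obtained by reflecting the same construction with a larger parameter $\kappa'>\kappa$, using that the middle constant $c\to 0$ as $\kappa\to\infty$ so that the factor $p$ can be absorbed. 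Without such an explicit profile, or an equivalent quantitative argument that genuinely controls $(2\cot(2\xi)-f'/f)^2$ against $-f''/f$ throughout the transition, the proof is incomplete.
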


\begin{proof}
Let 
\begin{eqnarray}
Q_{f,t} = \Ric_{g_{f,t}} - t^2 \left( \frac{\sin (2\xi )}{2f(\xi)} \right)^2 d\alpha^2 + \frac{1}{4} \frac{f''(\xi)}{f(\xi)} \pi^*_{Hopf} h_f .
\end{eqnarray}
Then for any $s_1 ,s_2 ,s_3 \in \mathbb{R}$,
\begin{eqnarray}
 & &  Q_{f,t} (s_1 U +s_2 X+s_3 Y ,s_1 U +s_2 X+s_3 Y ) \nonumber\\
& = & Q_{f,t} (s_1 U +s_2 X ,s_1 U +s_2 X ) + s^2_3 Q_{f,t} (Y, Y )  \\
& = & s_1^2 \left( \frac{\sin (2\xi )}{2f(\xi)} \right)^2 + 2s_1 s_2 \frac{\partial}{\partial \xi} \left( \frac{\sin (2\xi )}{2f(\xi)} \right) + (s_2^2 +s_3^2 ) \left( -\frac{3f''(\xi)}{4f(\xi)} - 2t \left( \frac{\sin (2\xi )}{2f(\xi)} \right)^2 \right) .\nonumber
\end{eqnarray}
Hence $Q_{f,t}\geq 0$ if and only if
\begin{eqnarray} \label{inequalityQft1}
\left(\frac{\partial}{\partial \xi} \left( \frac{\sin (2\xi )}{2f(\xi)} \right)\right)^2 \leq \left( \frac{\sin (2\xi )}{2f(\xi)} \right)^2 \left( -\frac{3f''(\xi)}{4f(\xi)} - 2t \left( \frac{\sin (2\xi )}{2f(\xi)} \right)^2 \right) .
\end{eqnarray}
Now we assume that $ \inf_{\xi\in [0,\frac{\pi}{2}]} \frac{\sin (2\xi )}{2f(\xi)} >0$. Then (\ref{inequalityQft1}) is equivalent to
\begin{eqnarray} \label{inequalityQft2}
\left( 2\cot (2\xi ) -  \frac{f'(\xi)}{f(\xi)} \right)^2 + \frac{3f''(\xi)}{4f(\xi)} \leq - 2t \left( \frac{\sin (2\xi )}{2f(\xi)} \right)^2 .
\end{eqnarray}

Let $\xi_0 \in (0,\frac{\tau}{10}) $ be a positive constant to be specified later, and let $\eta\in C^\infty (\mathbb{R})$ be a cut-off function such that $0\leq \eta\leq 1$, $\eta =1$ on $[0,\frac{\tau}{3}]$, and $\eta=0$ on $[\frac{2\tau}{3} ,\frac{\pi}{2}]$. For each $\kappa \geq 2$, set
\begin{equation}
\hat{f}_{\xi_0 ,\kappa} (\xi) =\left\{
\begin{aligned}
 \frac{\sin (\kappa \xi)}{\kappa}\;\;\;\;\;\;\;\;\;\;\;\;\;\;\;\;\;\;\;\;\;\;\;\;\;\;\;\;\;\;\;\;\;\;\;\;\; &, \textrm{ on $\left[ 0,\frac{2\xi_0}{\kappa } \right] $,}\\
\frac{ \sin (2 \xi_0 ) \sin (2 \xi) }{\kappa \sin (\frac{4\xi_0 }{ \kappa} ) } \left( \frac{2\xi_0}{\kappa} \right)^\alpha \left( \xi^{-\alpha} \eta (\xi) + \left( \frac{\tau}{2} \right)^{-\alpha} (1-\eta (\xi)) \right) &,  \textrm{ on $\left[ \frac{2\xi_0}{\kappa } , \frac{\pi}{2} \right] $,}\\
\end{aligned}
\right.  
\end{equation}
where $\alpha =  \frac{4\xi_0}{\kappa} \cot (\frac{2\xi_0}{\kappa} ) - 4\xi_0 \cot (2\xi_0) $. Clearly, we have $\hat{f}_{\xi_0 ,\kappa} \in C^{1,1} ([0,\frac{\pi}{2}]) $. Note that by the asymptotic expansion of $\cot (\theta )$, one can find a constant $\delta \in (0, \frac{\tau}{10} ) $ such that if $\theta \in (0,\delta ) $, then $\left| \cot (\theta ) - \frac{1}{\theta } \right| \leq \frac{\theta}{2} $. Hence for any $\xi_0 \in (0,\delta ) $, we have $|\alpha| \leq 8\xi_0 $. Assume that $\xi_0 \in (0,\frac{\delta}{10} ) $. We now estimate the left side of inequality (\ref{inequalityQft2}) for the function $\hat{f}_{\xi_0 ,\kappa} (\xi)$.
\begin{itemize}
\item Let $\xi\in \left[ 0,\frac{2\xi_0}{\kappa } \right] .$ Then we have
\begin{eqnarray} \label{estimatefxi0kappa1}
\left( 2\cot (2\xi ) -  \frac{\hat{f}_{\xi_0 ,\kappa}'(\xi)}{\hat{f}_{\xi_0 ,\kappa}(\xi)} \right)^2 + \frac{3\hat{f}_{\xi_0 ,\kappa}''(\xi)}{4\hat{f}_{\xi_0 ,\kappa}(\xi)} & = & \left( 2\cot (2\xi ) -  \kappa \cot (\kappa \xi) \right)^2 - \frac{3\kappa^2}{4} \\
& \leq & ( 2\xi + \kappa^2 \xi )^2 - \frac{3\kappa^2}{4} \leq -\frac{\kappa^2}{2} .\nonumber
\end{eqnarray}
\item Let $\xi\in \left[ \frac{2\xi_0}{\kappa } , \delta \right] .$ By a straightforward computation, one can see that
\begin{eqnarray}
& & \left( 2\cot (2\xi ) -  \frac{\hat{f}_{\xi_0 ,\kappa}'(\xi)}{\hat{f}_{\xi_0 ,\kappa}(\xi)} \right)^2 + \frac{3\hat{f}_{\xi_0 ,\kappa}''(\xi)}{4\hat{f}_{\xi_0 ,\kappa}(\xi)} \nonumber\\
& = & \left( 2\cot (2\xi ) -  \frac{\partial}{\partial \xi} \left( \log ( \sin (2\xi) \xi^{-\alpha } ) \right) \right)^2 + \frac{3  }{4\sin (2\xi) \xi^{-\alpha } } \frac{\partial^2}{\partial \xi^2} \left( \sin (2\xi) \xi^{-\alpha } \right) \label{estimatefxi0kappa2}\\
& = & \alpha^2 \xi^{-2} -3 - 3\alpha \cot (2\xi ) \xi^{-1} + \frac{3 \alpha (1+\alpha) }{4 } \xi^{-2} \nonumber \\
& = & -3 + \alpha \xi^{-1} \left(  \frac{3 +7\alpha }{4 } \xi^{-1} -3\cot (2\xi) \right) \leq -3 + \alpha \xi^{-1} \left(  \frac{3 +56\xi_0 }{4 } \xi^{-1} -\frac{3}{2\xi } + 3\xi \right) \leq -3 .\nonumber
\end{eqnarray}
\item Let $\xi\in \left[ \delta ,\frac{\pi}{2} \right] .$ Hence we have
\begin{eqnarray}
 \left| 2\cot (2\xi ) -  \frac{\hat{f}_{\xi_0 ,\kappa}'(\xi)}{\hat{f}_{\xi_0 ,\kappa}(\xi)} \right| & = & \left| \frac{-\alpha \xi^{-1-\alpha} \eta (\xi) + \eta'(\xi) \left( \xi ^{-\alpha } - \left(\frac{\tau}{2} \right)^{-\alpha} \right) }{\xi^{-\alpha} \eta (\xi) + \left( \frac{\tau}{2} \right)^{-\alpha} (1-\eta (\xi))} \right| \\
 & \leq & \alpha \tau^{\alpha} \delta^{-1-\alpha} + \sup_{\xi\in [0,\tau ]} |\eta'(\xi)| \left( \delta ^{-\alpha } - \tau^{-\alpha} \right) \nonumber ,
\end{eqnarray}
and
\begin{eqnarray}
  \left|4+ \frac{ \hat{f}_{\xi_0 ,\kappa}''(\xi)}{ \hat{f}_{\xi_0 ,\kappa}(\xi)} \right| & \leq & \left| \frac{4 \cot (2\xi) \left( \eta'(\xi) \left( \xi ^{-\alpha } - \left(\frac{\tau}{2} \right)^{-\alpha} \right) -\alpha \xi^{-1-\alpha} \eta (\xi)  \right) }{ \xi^{-\alpha} \eta (\xi) + \left( \frac{\tau}{2} \right)^{-\alpha} (1-\eta (\xi)) } \right| \nonumber \\
  &  & + \left| \frac{ \eta'' (\xi) \left( \xi ^{-\alpha } - \left(\frac{\tau}{2} \right)^{-\alpha} \right) -2\alpha \xi^{-1-\alpha} \eta'(\xi) + \alpha (1+\alpha) \xi^{-2-\alpha } \eta (\xi) }{ \xi^{-\alpha} \eta (\xi) + \left( \frac{\tau}{2} \right)^{-\alpha} (1-\eta (\xi)) } \right| \\
  & \leq & 4(1+\cot (2\delta )) \sup_{\xi\in [0,\tau ]} ( 1+ |\eta'(\xi)| + |\eta''(\xi)| ) \left( \left( \delta ^{-\alpha } - \tau^{-\alpha} \right) + \alpha \delta^{-2-\alpha} \right) .\nonumber
\end{eqnarray}
Note that $|\alpha| \leq 8\xi_0 $, $\forall \xi_0 \in (0,\delta ) $. One can easily to see that there exists $\delta' \in (0,\delta )$ such that for any $ \xi_0 \in (0,\delta' ) $ and $\xi\in \left[ \delta ,\frac{\pi}{2} \right] ,$
\begin{eqnarray}
 \left| 2\cot (2\xi ) -  \frac{\hat{f}_{\xi_0 ,\kappa}'(\xi)}{\hat{f}_{\xi_0 ,\kappa}(\xi)} \right| + \left|4+ \frac{ \hat{f}_{\xi_0 ,\kappa}''(\xi)}{ \hat{f}_{\xi_0 ,\kappa}(\xi)} \right| \leq \frac{1}{10} .
\end{eqnarray}
Now we choose $\xi_0 \in \left( 0, \frac{1}{100} \min \{ \delta , \delta' \} 
 \right) $. Then for any $\xi\in \left[ \delta ,\frac{\pi}{2} \right] ,$
 \begin{eqnarray} \label{estimatefxi0kappa3}
\left( 2\cot (2\xi ) -  \frac{\hat{f}_{\xi_0 ,\kappa}'(\xi)}{\hat{f}_{\xi_0 ,\kappa}(\xi)} \right)^2 + \frac{3\hat{f}_{\xi_0 ,\kappa}''(\xi)}{4\hat{f}_{\xi_0 ,\kappa}(\xi)} & \leq & -3 + \frac{1}{5} \leq -2.
\end{eqnarray}
\item Combining (\ref{estimatefxi0kappa1}), (\ref{estimatefxi0kappa2}) and (\ref{estimatefxi0kappa3}), one can see that
\begin{eqnarray} \label{estimatefxi0kappa4}
\left( 2\cot (2\xi ) -  \frac{\hat{f}_{\xi_0 ,\kappa}'(\xi)}{\hat{f}_{\xi_0 ,\kappa}(\xi)} \right)^2 + \frac{3\hat{f}_{\xi_0 ,\kappa}''(\xi)}{4\hat{f}_{\xi_0 ,\kappa}(\xi)} & \leq & -2,\;\;\; \forall \xi \in \left[ 0,\frac{\pi}{2} \right] .
\end{eqnarray}
\end{itemize}

Let $ \xi_0 $ be the constant we chosen in the above. Since 
\begin{eqnarray}
\lim_{\kappa\to\infty} \frac{4\xi_0}{\kappa} \cot \left( \frac{2\xi_0}{\kappa} \right) - 4\xi_0 \cot (2\xi_0 ) = 2-4\xi_0 \cot (2\xi_0 ) >0 ,
\end{eqnarray}
we see that $ \lim\limits_{\kappa\to\infty} \left( \frac{2\xi_0}{\kappa} \right)^\alpha =0 $. Hence for each $\kappa\geq 2$, one can find $\kappa'>\kappa$ such that
\begin{eqnarray}
\hat{f}_{\xi_0 ,\kappa} (\xi) = p \hat{f}_{\xi_0 ,\kappa'} (\xi),\;\; \forall \xi\in \left[ \tau ,\frac{\pi}{2} \right] .
\end{eqnarray}

For each $\kappa \geq 2$, set
\begin{equation}
\hat{f}_{ \kappa} (\xi) =\left\{
\begin{aligned}
 \hat{f}_{\xi_0 ,\kappa} (\xi) &, \textrm{ on $\left[ 0,\frac{\pi}{4 } \right] $,}\\
 p\hat{f}_{\xi_0 ,\kappa'} (\frac{\pi}{2} - \xi) &,  \textrm{ on $\left[ \frac{\pi}{4 } , \frac{\pi}{2} \right] $.}\\
\end{aligned}
\right.  
\end{equation} 
Then we have $\hat{f}_{ \kappa} \in C^{1,1} ([0,\frac{\pi}{2}]) \cap C^{\infty} ([0,\frac{2\xi_0}{\kappa}]) \cap C^{\infty} ([ \frac{2\xi_0}{\kappa} , \frac{\pi}{2} - \frac{2\xi_0}{\kappa'} ]) \cap C^{\infty} ([ \frac{\pi}{2} - \frac{2\xi_0}{\kappa'} ,\frac{\pi}{2} ]) $, and
\begin{eqnarray} \label{estimatefkappa1}
\left( 2\cot (2\xi ) -  \frac{\hat{f}'_{ \kappa}(\xi)}{\hat{f}_{ \kappa}(\xi)} \right)^2 + \frac{3\hat{f}_{ \kappa}''(\xi)}{4\hat{f}_{ \kappa}(\xi)} & \leq & -2,\;\;\; \forall \xi \in \left[ 0,\frac{\pi}{2} \right] .
\end{eqnarray}
By smoothing $\hat{f}_{ \kappa}$ around $\frac{2\xi_0}{\kappa} $ and $\frac{\pi}{2} - \frac{2\xi_0}{\kappa'} $, one can obtain the function $f_\kappa $. Without loss of generality, we can assume that $f_\kappa =\hat{f}_\kappa $ on $[0,\frac{ \xi_0}{\kappa}] \cup [ \frac{\pi}{2} - \frac{ \xi_0}{\kappa'} ,\frac{\pi}{2} ] $, and
\begin{eqnarray} \label{estimatefkappa2}
\left( 2\cot (2\xi ) -  \frac{f'_{ \kappa}(\xi)}{f_{ \kappa}(\xi)} \right)^2 + \frac{3f_{ \kappa}''(\xi)}{4f_{ \kappa}(\xi)} & \leq & -1,\;\;\; \forall \xi \in \left[ 0,\frac{\pi}{2} \right] .
\end{eqnarray}
Clearly, $f_\kappa $ satisfies (i), (ii) and (iii). It is sufficient to prove (iv) now.

Let $t_\kappa = \frac{1}{10} \inf_{\xi\in ( 0,\frac{\pi}{2})} \left( \left( \frac{\sin (2\xi )}{2f(\xi)} \right)^{-2} \right) $ and $\epsilon_\kappa = \frac{1}{10} \inf_{\xi\in ( 0,\frac{\pi}{2})} \min\left\{ \left( \frac{\sin (2\xi )}{2f(\xi)} \right)^{-2} ,- \frac{1}{4} \frac{f(\xi)}{f''(\xi)} \right\} $. Then for any $t\in (0,t_\kappa)$, (\ref{inequalityQft2}) implies that $Q_{f_\kappa ,t} \geq 0$, and hence
\begin{eqnarray}
  \Ric_{g_{f_\kappa ,t}} = Q_{f_\kappa ,t} + t^2 \left( \frac{\sin (2\xi )}{2f_\kappa (\xi)} \right)^2 d\alpha^2 - \frac{1}{4} \frac{f_\kappa''(\xi)}{f_\kappa (\xi)} \pi^*_{Hopf} h_{f_\kappa} \geq \epsilon_\kappa \left( t^2 d\alpha^2 + \pi_{Hopf}^* h_{f_\kappa} \right) ,
\end{eqnarray}
which proves (iv). 
\end{proof}

Our next lemma is a continuation of Lemma \ref{lemmakappariccipositive}. We construct suitable metrics $g_{\rho ,\phi,f} $ in the following lemma which look like $\mathbb{R}^2 \times C(\mathbb{S}^1_{\theta}) $ near the exceptional divisor $\blowup^{-1} (0) $.

\begin{lmm}\label{lemmaedgesingularities}
Let $\xi_0$, $\kappa$, $f_{\kappa}$ be the data in Lemma \ref{lemmakappariccipositive}. Then there exists a constant $\varepsilon_0 = \varepsilon_0 (\kappa) \in (0,\frac{1}{10}) $ such that for any $\mu\in (0,\frac{1}{10})$, we can find smooth functions $\rho_{\kappa ,\mu}$ and $\phi_{\kappa ,\mu}$ satisfying the following properties.
\begin{enumerate}[\rm (i).]
    \item $\Ric_{g_{\rho_{\kappa ,\mu} ,\phi_{\kappa ,\mu} , f_\kappa}} \geq 0$,
    \item For any $r\in \left( 0,\frac{1}{10\kappa } \right) $, $\phi_{\kappa ,\mu} (r) =1$,
    \item For any $\varepsilon \in (0,\varepsilon_0 ) $ and $r\in \left( 0,\frac{1}{10\kappa } \right) $, $\left|\frac{\rho_{\kappa ,\mu} (r)}{n} - \varepsilon \sin r \right| \leq \mu $, $ \frac{\rho_{\kappa ,\mu} (r)}{n} \leq \sin r $ $ \frac{\rho'_{\kappa ,\mu } (r) \sin (\kappa r) }{ \kappa \rho_{\kappa ,\mu } (r) \cos (\kappa r) } \in [1-\mu ,1+\mu] $, and $ \frac{-\rho''_{\kappa ,\mu } (r) }{ \kappa^2 \rho_{\kappa ,\mu } (r) } \geq 1-\mu $,
    \item There are constants $c_1, c_2 ,c_3 >0$ and $R>0$ such that for any $r\geq R$, $\rho_{\kappa ,\mu} (r) = c_1 (r+ c_3 ) $, and $\phi_{\kappa ,\mu} (r) = c_2 (r+ c_3 ) $.
\end{enumerate}
\end{lmm}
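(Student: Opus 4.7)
The construction of $\rho_{\kappa,\mu}$ and $\phi_{\kappa,\mu}$ will proceed in three stages on intervals $[0, r_1]$, $[r_1, R_0]$, and $[R_0, \infty)$. On the \emph{edge region} $[0, r_1]$ with $r_1 \geq 1/(10\kappa)$, the plan is to set $\phi_{\kappa,\mu}(r) = 1$ and $\rho_{\kappa,\mu}(r) = n\varepsilon \sin(\kappa r)/\kappa$, the natural choice to match the pointwise bounds in (iii): indeed $\rho' \sin(\kappa r)/(\kappa \rho \cos(\kappa r)) \equiv 1$ and $-\rho''/(\kappa^2 \rho) \equiv 1$ identically, while for $r \in (0, 1/(10\kappa))$ one has $|\rho/n - \varepsilon \sin r| = \varepsilon \,|\sin(\kappa r)/\kappa - \sin r| = O(\varepsilon/\kappa)$, which is bounded by $\mu$ for $\varepsilon$ below a threshold $\varepsilon_0 = \varepsilon_0(\kappa)$. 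Applying the Ricci formulas of Section \ref{sectionsurgeryriccicyclic} with $\phi' \equiv \phi'' \equiv 0$ gives $\Ric(\partial_r, \partial_r) = \kappa^2$, $\Ric(U, U) = 2\rho^4 A^2 + \kappa^2 \rho^2$, and $\Ric(X, X) = \Ric(Y, Y) = -f_\kappa''/f_\kappa - 2\rho^2 A^2$, where $A(\xi) = \sin(2\xi)/(2f_\kappa(\xi))$; all three are nonnegative for small $\varepsilon$ by Lemma \ref{lemmakappariccipositive}, which gives $-f_\kappa''/f_\kappa \geq$ const $> 0$. The off-diagonal $\Ric(U, Y) = \rho^2 \partial_\xi A$ is controlled because $(2\rho^4 A^2 + \kappa^2 \rho^2)(-f_\kappa''/f_\kappa - 2\rho^2 A^2)$ dominates $\rho^4(\partial_\xi A)^2$ when $\rho$ is small.

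On the \emph{asymptotic region} $[R_0, \infty)$ one sets $\rho_{\kappa,\mu}(r) = c_1(r+c_3)$ and $\phi_{\kappa,\mu}(r) = c_2(r+c_3)$ with constants $c_1, c_2, c_3 > 0$ to be determined. Since $\rho'' = \phi'' = 0$, the metric is exactly a cone over a fixed warped Berger sphere, and the condition $\Ric \geq 0$ reduces to a finite system of inequalities among $c_1, c_2$, the constant $c$ appearing in Lemma \ref{lemmakappariccipositive}(ii), and the extrema of $A$ and $\partial_\xi A$ on $[0, \pi/2]$; this system admits solutions, which in turn determine the asymptotic cone angle $\delta = \delta(\Gamma)$. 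On the \emph{transition region} $[r_1, R_0]$ we interpolate by smooth concave $\rho$ and $\phi$ matching the boundary profiles and derivatives: a convenient prescription is to specify $-\rho''/\rho$ and $-\phi''/\phi$ as nonnegative bump functions that connect the edge values to the asymptotic value zero, and recover $\rho, \phi$ by integration. By taking $R_0$ sufficiently large and the bumps sufficiently flat, the first derivatives $\rho', \phi'$ in this region remain moderate, so the cross terms $(\phi')^2$, $\rho'\phi'/\phi$, $\rho\rho'\phi'/\phi$ can be made small enough that the positive diagonal contributions from $-\rho''/\rho$, $-\phi''/\phi$, and $-f_\kappa''/f_\kappa$ keep each diagonal Ricci component nonnegative.

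The principal obstacle is the off-diagonal component $\Ric(U, Y) = (\rho^2/\phi^2) \partial_\xi A$, which is nonzero on the collar intervals $\xi \in [0, \tau] \cup [\pi/2 - \tau, \pi/2]$ where $f_\kappa$ departs from a constant multiple of $\sin(2\xi)$. Throughout the transition one must verify the Cauchy--Schwarz estimate $\Ric(U, U) \cdot \Ric(Y, Y) \geq \Ric(U, Y)^2$ pointwise in $\xi$. The strategy is to arrange the ratio $\rho/\phi$ to decrease monotonically along the transition from its edge value to its asymptotic value $c_1/c_2$, and to invoke Lemma \ref{lemmakappariccipositive}(iv), which furnishes the uniform lower bound $\Ric_{g_{f_\kappa, t}} \geq \epsilon_\kappa(t^2 d\alpha^2 + \pi_{Hopf}^* h_{f_\kappa})$ for $t \in (0, t_\kappa)$. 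This ensures that the warped base Ricci contribution to $\Ric(Y, Y)$ always dominates the collar correction, provided the transition is slow enough and $\varepsilon_0$ is chosen small. Coordinating the three pieces so that $\rho$, $\phi$, $\rho'/\rho$, $\phi'/\phi$ all match continuously at $r_1$ and $R_0$ while the full Ricci tensor remains positive semidefinite at every intermediate $(r, \xi)$ is the main technical work of the lemma.
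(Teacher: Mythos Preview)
Your three-region decomposition and the plan to control the fibrewise Ricci through Lemma~\ref{lemmakappariccipositive}(iv) via the ratio $\rho/\phi$ are the same ideas the paper uses. But there is a genuine gap in the transition step: you propose to interpolate with a \emph{concave} $\phi$, prescribing $-\phi''/\phi$ as a nonnegative bump. This is impossible. On the edge region $\phi\equiv 1$, so $\phi'(r_1)=0$; on the asymptotic region $\phi'=c_2>0$; hence $\phi'$ must increase, forcing $\phi''>0$ somewhere. The resulting negative contribution to $\Ric(\partial_r,\partial_r)=-\rho''/\rho-2\phi''/\phi$ cannot be wished away and must be explicitly dominated. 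Relatedly, your claim that a long transition makes $(\phi')^2$, $\rho'\phi'/\phi$, etc.\ small is not right: $\phi'$ must eventually reach the fixed value $c_2$, so stretching the interval does not shrink these terms.

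The paper's remedy is not a long slow transition but an extremely gentle one on a \emph{fixed} short interval: it takes $\phi=1$ on $[0,\tfrac{1}{8\kappa}]$, then $1+(\epsilon_\kappa\hat\mu)^{20}(r-\tfrac{1}{8\kappa})^4$, then linear, with $\hat\mu$ chosen so small that $|\phi'|+|\phi''|\le(\epsilon_\kappa\hat\mu)^{18}$. The unavoidable $\phi''>0$ is then swamped by $-\rho''/\rho\ge(1-\mu)\kappa^2$, and the cross terms $\rho'\phi\phi'/\rho$, $\phi\phi''$, $(\phi')^2$ are absorbed by the uniform margin $\epsilon_\kappa$ from Lemma~\ref{lemmakappariccipositive}(iv). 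This forces the asymptotic slope $c_2$ to be of order $(\epsilon_\kappa\hat\mu)^{20}$---it is not a free parameter to be solved for from ``a finite system of inequalities'' at infinity as your plan suggests. Correspondingly, the paper does not keep $\rho=n\varepsilon\sin(\kappa r)/\kappa$ throughout but switches at $r=\hat\mu$ to a fractional power $(\sin\kappa r)^{1-\mu/2}$; this lowers $\rho'/\rho$ just enough that $\rho$ can be concavely extended to a linear function $c_1(r+c_3)$ sharing the \emph{same} offset $c_3$ as $\phi$, which is what condition~(iv) requires.
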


\begin{proof}
Let $\hat{\mu} \in (0,\frac{1}{100\kappa} ) $ be a constant to be specified later. Set
\begin{equation}
\hat{\rho}_{ \kappa,\mu } (r) =\left\{
\begin{aligned}
 n \frac{\sin (\kappa r)}{\kappa}\;\;\;\;\;\;\;\;\;\;\;\;\; &, \textrm{ on $\;\;\left[ 0,\hat{\mu} \right] $,}\\
n \left( \sin (\kappa \hat{\mu}) \right)^{\frac{\mu}{2}} \left( \sin (\kappa r ) \right)^{1-\frac{\mu}{2}} &,  \textrm{ on $\left[ \hat{\mu} , \frac{1}{5\kappa } \right] $.}\\
\end{aligned}
\right.  
\end{equation}
It is easy to see that $ \hat{\rho}_{ \kappa,\mu } \in C^{0,1} ( [ 0,\frac{1}{5\kappa} ] ) \cap C^{\infty } ( [ 0,\hat{\mu} ] ) \cap C^{\infty } ( [ \hat{\mu} , \frac{1}{5\kappa} ] ) $.

By a straightforward calculation, we arrive at
\begin{equation}
\frac{\hat{\rho}'_{ \kappa,\mu } (r)}{\hat{\rho}_{ \kappa,\mu } (r)} =\left\{
\begin{aligned}
 \kappa \cot (\kappa r) \;\;\;\;\;\; &, \textrm{ on $\;\;\left[ 0,\hat{\mu} \right] $,}\\
\left( 1-\frac{\mu}{2} \right) \kappa  \cot (\kappa r) &,  \textrm{ on $\left[ \hat{\mu} , \frac{1}{5\kappa } \right] $,}\\
\end{aligned}
\right.  
\end{equation}
and
\begin{equation}
\frac{\hat{\rho}''_{ \kappa,\mu } (r)}{\hat{\rho}_{ \kappa,\mu } (r)} =\left\{
\begin{aligned}
 -\kappa^2 \;\;\;\;\;\;\;\;\;\;\;\;\;\;\;\;\;\;\;\;\; &, \textrm{ on $\;\;\left[ 0,\hat{\mu} \right] $,}\\
-\left( 1-\frac{\mu}{2} \right) \kappa^2 \left( 1 + \frac{\mu}{2}  \cot^2 (\kappa r) \right) &,  \textrm{ on $\left[ \hat{\mu} , \frac{1}{5\kappa } \right] $.}\\
\end{aligned}
\right.  
\end{equation}
Since $\frac{\hat{\rho}'_{ \kappa,\mu } (r) \tan (\kappa r ) }{\kappa \hat{\rho}_{ \kappa,\mu } (r)} \in [1-\frac{ \mu}{2 } ,1 + \frac{ \mu}{2 } ] $, $- \frac{\hat{\rho}''_{ \kappa,\mu } (r) }{\kappa^2 \hat{\rho}_{ \kappa,\mu } (r)} \geq 1-\frac{1}{2\mu } $ and $ \hat{\rho}'_{ \kappa,\mu } (\hat{\mu}_+) < \hat{\rho}'_{ \kappa,\mu } (\hat{\mu}_- ) $, one can obtain a function $ \tilde{\rho}_{ \kappa,\mu } \in C^{\infty} ( [ 0,\frac{1}{5\kappa} ] ) $ by smoothing $ \hat{\rho}_{ \kappa,\mu } $ near $\hat{\mu}$ such that $\tilde{\rho}_{ \kappa,\mu } = \hat{\rho}_{ \kappa,\mu } $ on $[ \frac{1}{10\kappa},\frac{1}{5\kappa} ]$, and $\tilde{\rho}_{ \kappa,\mu }$ satisfies condition (iii) except the first inequality.

Let
\begin{equation}
\hat{\phi}_{ \kappa,\mu } (r) =\left\{
\begin{aligned}
 1 \;\;\;\;\;\;\;\;\;\;\;\;\;\;\;\;\;\;\;\;\;\;\;\;\;\;\;\;\;\;\;\;\;\; &, \textrm{ on $\;\;\left[ 0,\frac{1}{8\kappa } \right] $,}\\
 1 + (\epsilon_\kappa \hat{\mu} )^{20} \left( r - \frac{1}{8\kappa} \right)^4 \;\;\;\;\;\;\;\;\;\;\;\;\;\;\;\;\;\; &, \textrm{ on $\;\;\left[ \frac{1}{8\kappa } , \frac{3}{20\kappa } \right] $,}\\
1 + (\epsilon_\kappa \hat{\mu} )^{20} \left( \frac{1}{40\kappa } \right)^4 + 4 (\epsilon_\kappa \hat{\mu} )^{20} \left( \frac{1}{40\kappa } \right)^3 \left( r - \frac{1}{8\kappa} \right) &,  \textrm{ on $\left[ \frac{3}{20\kappa } , \infty \right) $.}\\
\end{aligned}
\right.  
\end{equation}
where $ \epsilon_\kappa \in (0,\frac{1}{10}) $ is the constant in Lemma \ref{lemmakappariccipositive}. It is easy to check that $\hat{\phi}_{ \kappa,\mu } \in C^{1,1} ([0,\infty )) $, and $ |\hat{\phi}_{ \kappa,\mu } | + |\hat{\phi}_{ \kappa,\mu } | \leq (\epsilon_\kappa \hat{\mu} )^{19} $. By smoothing $ \hat{\phi}_{ \kappa,\mu } $ near $\frac{1}{8\kappa}$ and $\frac{3}{20\kappa}$, we can obtain a function $\tilde{\phi}_{\kappa ,\mu} \in C^\infty ([0,\infty)) $ such that $\hat{\phi}_{ \kappa,\mu } = \tilde{\phi}_{\kappa ,\mu} $ on $[0,\frac{1}{10\kappa}] \cup [ \frac{1}{5\kappa } ,\infty ) $, and $ |\tilde{\phi}'_{ \kappa,\mu } | + |\tilde{\phi}'_{ \kappa,\mu } | \leq (\epsilon_\kappa \hat{\mu} )^{18} $.

Choose $\hat{\mu} >0$ such that $(\sin(\kappa \hat{\mu}) )^\mu \leq \left( \frac{t_\kappa}{100n\kappa } \right)^2 $, where $ t_\kappa \in (0,\frac{1}{10}) $ is the constant in Lemma \ref{lemmakappariccipositive}. Let $\varepsilon_0 = \frac{t_\kappa}{100n\kappa } $ and $\varepsilon = (\sin(\kappa \hat{\mu}) )^\frac{\mu}{2} $. Then $\tilde{\rho}_{ \kappa,\mu }$ satisfies the first inequality in condition (iii), $ \frac{\tilde{\rho}^2_{\kappa ,\mu}}{ \tilde{\phi}^2_{\kappa ,\mu} } < t_\kappa $ on $[0, \frac{1}{5\kappa } ] $, and Lemma \ref{lemmakappariccipositive} implies that for any $r\in [0 , \frac{1}{5\kappa } ]$
\begin{eqnarray}
    \Ric_{g_{\tilde{\rho}_{\kappa ,\mu} , \tilde{\phi}_{\kappa ,\mu} ,f_\kappa }} & = & -\left( \frac{\tilde{\rho}''_{\kappa ,\mu}}{\tilde{\rho}_{\kappa ,\mu}} + 2 \frac{\tilde{\phi}''_{\kappa ,\mu}}{\tilde{\phi}_{\kappa ,\mu}} \right) dr^2 + \Ric_{g_{f_\kappa , \frac{\tilde{\rho}^2_{\kappa ,\mu}}{ \tilde{\phi}^2_{\kappa ,\mu} } }} - \left( 2\frac{\tilde{\rho}_{\kappa ,\mu} \tilde{\rho}'_{\kappa ,\mu} \tilde{\phi}'_{\kappa ,\mu} }{\tilde{\phi}_{\kappa ,\mu}} + \tilde{\rho}_{\kappa ,\mu} \tilde{\rho}''_{\kappa ,\mu} \right) d\alpha^2 \nonumber\\
    & & - \left( \frac{\tilde{\rho}_{\kappa ,\mu} \tilde{\rho}'_{\kappa ,\mu} \tilde{\phi}'_{\kappa ,\mu} }{\tilde{\phi}_{\kappa ,\mu}} + \tilde{\phi}_{\kappa ,\mu} \tilde{\phi}''_{\kappa ,\mu} + (\tilde{\phi}'_{\kappa ,\mu})^2 \right) \pi_{Hopf}^* h_{f_\kappa } \nonumber\\
    & \geq & -\left( \frac{\tilde{\rho}''_{\kappa ,\mu}}{\tilde{\rho}_{\kappa ,\mu}} + 2 \frac{\tilde{\phi}''_{\kappa ,\mu}}{\tilde{\phi}_{\kappa ,\mu}} \right) dr^2 + \left( \epsilon_\kappa \frac{\tilde{\rho}^4_{\kappa ,\mu}}{ \tilde{\phi}^4_{\kappa ,\mu} } - 2\frac{\tilde{\rho}_{\kappa ,\mu} \tilde{\rho}'_{\kappa ,\mu} \tilde{\phi}'_{\kappa ,\mu} }{\tilde{\phi}_{\kappa ,\mu}} - \tilde{\rho}_{\kappa ,\mu} \tilde{\rho}''_{\kappa ,\mu} \right) d\alpha^2 \\
    & & + \left( \epsilon_\kappa - \frac{\tilde{\rho}_{\kappa ,\mu} \tilde{\rho}'_{\kappa ,\mu} \tilde{\phi}'_{\kappa ,\mu} }{\tilde{\phi}_{\kappa ,\mu}} - \tilde{\phi}_{\kappa ,\mu} \tilde{\phi}''_{\kappa ,\mu} - (\tilde{\phi}'_{\kappa ,\mu})^2 \right) \pi_{Hopf}^* h_{f_\kappa } .\nonumber 
\end{eqnarray}

Since $\tilde{\phi}_{\kappa ,\mu} =1 $ on $ [0, \frac{1}{10\kappa } ]$, one can see that for any $r\in [0, \frac{1}{10\kappa } ] $,
\begin{eqnarray}
 \Ric_{g_{\tilde{\rho}_{\kappa ,\mu} , \tilde{\phi}_{\kappa ,\mu} ,f_\kappa }} & \geq & \left( 1-\mu \right) dr^2 + \left( \epsilon_\kappa \frac{\tilde{\rho}^4_{\kappa ,\mu}}{ \tilde{\phi}^4_{\kappa ,\mu} } + (1-\mu )\tilde{\rho}^2_{\kappa ,\mu} \right) d\alpha^2 + \epsilon_\kappa \pi_{Hopf}^* h_{f_\kappa } \geq 0 .
\end{eqnarray}
and for any $r\in [\frac{1}{10\kappa } , \frac{1}{5\kappa } ]$
\begin{eqnarray}
 \Ric_{g_{\tilde{\rho}_{\kappa ,\mu} , \tilde{\phi}_{\kappa ,\mu} ,f_\kappa }} & \geq & \left( 1-\mu - 2(\epsilon_\kappa \hat{\mu} )^{15} \right) dr^2 + \left( \epsilon_\kappa \left(\frac{\hat{\mu}}{100}\right)^4 - 2n^2 \kappa^{ \mu} \hat{\mu}^{17+ \mu} \epsilon^{18}_\kappa \right) d\alpha^2 \\
 & & + \left( \epsilon_\kappa - n^2 \kappa^{ \mu} \hat{\mu}^{17+ \mu} \epsilon^{18}_\kappa - 2 \epsilon^{18}_\kappa \hat{\mu}^{18} - \epsilon^{36}_\kappa \hat{\mu}^{36} \right) \pi_{Hopf}^* h_{f_\kappa } \geq 0.\nonumber
\end{eqnarray}

Write $c_2 = 4 (\epsilon_\kappa \hat{\mu} )^{20} \left( \frac{1}{40\kappa } \right)^3 $, $c_3 = \frac{(40\kappa)^3  }{4 (\epsilon_\kappa \hat{\mu} )^{20} } - \frac{19}{160\kappa } $. By definition, for any $r\geq \frac{1}{5\kappa}$, we have $\tilde{\phi}_{\kappa ,\mu} (r) =c_2 (r+c_3) $. Since $\frac{1}{c_3 +\frac{1}{5\kappa } } \leq (1-\mu) \kappa \cot (\frac{1}{5}) = \frac{\tilde{\rho}'_{ \kappa,\mu } (\frac{1}{5\kappa})}{\tilde{\rho}_{ \kappa,\mu } (\frac{1}{5\kappa})} $, we can find a function $\rho_{\kappa ,\mu } \in C^{\infty} ([0,\infty)) $ satisfies that $\rho_{\kappa ,\mu} = \tilde{\rho}_{\kappa ,\mu} $ on $[0,\frac{1}{5\kappa}]$, $\rho''_{\kappa ,\mu} \leq 0 $, and $\rho_{\kappa ,\mu} (r) = c_1 (r+c_3 ) $ on $[ \frac{1}{4\kappa} ,\infty)$, where $c_1 >0$ is a constant such that $\frac{\tilde{\rho}_{ \kappa,\mu } (\frac{1}{5\kappa})}{c_3 +\frac{1}{5\kappa } } \leq c_1 \leq \frac{2\tilde{\rho}_{ \kappa,\mu } (\frac{1}{5\kappa})}{c_3 +\frac{1}{5\kappa } } $.

Let $\phi_{\kappa ,\mu} = \tilde{\phi}_{\kappa ,\mu} $. Then the functions $\rho_{\kappa ,\mu} , \phi_{\kappa ,\mu}$ satisfy the conditions (ii), (iii), (iv). By applying Lemma \ref{lemmakappariccipositive} again, we see that $\Ric_{g_{\rho_{\kappa ,\mu} ,\phi_{\kappa ,\mu} , f_\kappa}} \geq 0$, and (i) is proved. This completes the proof.
\end{proof}

\begin{rmk}
Actually, we only need to use Lemma \ref{lemmakappariccipositive} and Lemma \ref{lemmaedgesingularities} in the case $\kappa =2$.
\end{rmk}

\subsection{Adding conical singularities}
In the last step, we have constructed Riemannian metrics $g_{\rho_{\kappa ,\mu} ,\phi_{\kappa ,\mu} , f_\kappa}$ on the orbifold $(\mathcal{O}(-n) /\Gamma_{p,0,1} ) $ with non-negative Ricci curvature and Euclidean volume growth. But the tangent cone at the singular point $\{(0,0,[0,1]) \}$ is isometric to $C(\mathbb{S}^3 /\Gamma_{p,-n,1})$ for some small constant $\delta >0$. Now we will modify the metric near the singular point $\{(0,0,[0,1]) \}$ such that the tangent cone at $\{(0,0,[0,1]) \}$ is isometric to $C((N,g_N))$ for some manifold $(N,g_N)$ with $\Ric_{g_N} >1 $ and small volume.

For calculate the curvature near the point $ (0,0,[0,1]) $, we need to use another coordinate. Now we summarize the results obtained in the last two steps into the following proposition.

\begin{prop}\label{propositionlocalproductstructure}
Let $(n,p)$ be a coprime pair of integers, and $n>p\geq 1$. Then there exists a constant $r_0 = r_0 (n,p) >0$ satisfies the following properties. For any $\mu\in (0,\frac{1}{100})$, there exist
\begin{itemize}
    \item A Riemannian metric $g_{n,p,\mu }$ on $(\mathcal{O}(-n) /\Gamma_{p,0,1} ) \sq ( \proj^{-1} ([1,0]) \cup \{(0,0,[0,1]) \} ) $,
    \item A Riemannian metric $h_{n,p,\mu}$ on $\mathbb{S}^3 /\Gamma_{n,1,p} $,
    \item A positive constant $R_\mu >0$,
    \item A non-negative function $\rho_\mu$ on $[0,r_0]$,
    \item A compact subset $K_\mu \subset (\mathcal{O}(-n) /\Gamma_{p,0,1} ) $ containing the singular point $ (0,0,[0,1]) $,
\end{itemize}
such that
\begin{itemize}
    \item The extension of the metric $g_{n,p,\mu }$ on $(\mathcal{O}(-n) /\Gamma_{p,0,1} ) \sq \{ (0,0,[0,1]) \}$ gives a smooth Riemannian manifold structure on $(\mathcal{O}(-n) /\Gamma_{p,0,1} ) \sq \{ (0,0,[0,1]) \}$ with $\Ric_{g_{n,p,\mu}} \geq 0$,
    \item The Riemannian manifold $((\mathcal{O}(-n) /\Gamma_{p,0,1} ) \sq K_\mu ,g_{n,p,\mu} )$ is isometric to the annulus $A_{R_\mu ,\infty } (o) $ in the metric cone $ (C(\mathbb{S}^3 /\Gamma_{n,1,p} ) ,dr^2 +r^2 h_{n,p,\mu} ,o ) $,
    \item The function $\frac{\rho_{ \mu} (r)}{n} \leq \min \{ \sin ( r) , \mu + \mu \sin ( r) \} $, $ \left|\frac{\rho'_{ \mu } (r) \sin (2 r) }{ 2 \rho_{ \mu } (r) \cos (2 r) } -1 \right| \leq \mu $, and $ \frac{-\rho''_{ \mu } (r) }{ 4 \rho_{ \mu } (r) } \geq 1-\mu $,
    \item The metric ball $B_{r_0} ((0,0,[0,1])) \subset \mathcal{O}(-n) /\Gamma_{p,0,1} $ is isometric to the metric ball $B_{r_0} ((0,0)) \subset ( W_{\frac{\sin (2r)}{2}} \times W_{\frac{\rho_\mu (r)}{n}}  ) / G_{n,p,\mu } $ for some finite group $G_{n,p,\mu } < \mathbb{S}^1 \times \mathbb{S}^1 \leq \Aut (W_{\frac{\sin (2r)}{2}} \times W_{\frac{\rho_\mu (r)}{n}} ) $,
\end{itemize}
where $\proj^{-1} ([1,0]) \subset \mathcal{O}(-n) $ is the fiber over $[1,0] \in \mathbb{C}P^1 $, and $W_\varphi $ is the $\mathbb{S}^1$-invariant Riemannian manifold given by the rotationally symmetric warped product structure $( [0,r_0 ] \times \mathbb{S}^1 ,dr^2 + \varphi(r)^2 d\theta^2 ) $.
\end{prop}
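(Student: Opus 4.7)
The plan is to instantiate Lemmas \ref{lemmakappariccipositive} and \ref{lemmaedgesingularities} at $\kappa = 2$ (the only case needed, per the remark) and read off each of the five pieces of data from the output. First apply Lemma \ref{lemmakappariccipositive} with $\kappa = 2$ to obtain $f_2$, then Lemma \ref{lemmaedgesingularities} to obtain $\rho_{2, \mu}, \phi_{2, \mu}$ together with constants $c_1, c_2, c_3, R > 0$. Set $\rho_\mu := \rho_{2, \mu}$ and define
\begin{equation*}
g_{n, p, \mu} := g_{\rho_{2, \mu}, \phi_{2, \mu}, f_2}
\end{equation*}
via Fact \ref{factwarpedbergersphere}. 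The Ricci nonnegativity $\Ric_{g_{n,p,\mu}} \geq 0$ is immediate from Lemma \ref{lemmaedgesingularities}(i); smoothness on $(\mathcal{O}(-n)/\Gamma_{p, 0, 1}) \sq \{(0, 0, [0, 1])\}$ is the content of Fact \ref{factwarpedbergersphere} using $f_2'(0) = 1$, $f_2^{(\mathrm{even})}(0) = 0$, $\rho_\mu(0) = 0$, $\rho_\mu'(0) = n$, and $\phi_\mu(0) = 1$. The third bullet is then obtained by taking $r_0 := 1/20 = 1/(10\kappa)$ and choosing the parameter $\varepsilon$ in Lemma \ref{lemmaedgesingularities}(iii) to be at most $\mu$, so that $\rho_\mu/n \leq \varepsilon \sin r + \mu \leq \mu(1 + \sin r)$, together with the stated derivative bounds.

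For the cone structure at infinity, Lemma \ref{lemmaedgesingularities}(iv) gives $\rho_\mu(r) = c_1(r + c_3)$ and $\phi_\mu(r) = c_2(r + c_3)$ for $r \geq R$. Substituting $\tilde r := r + c_3$ recasts the metric on $\{r \geq R\}$ as $d\tilde r^2 + \tilde r^2 \bigl( c_1^2 d\alpha^2 + c_2^2 \pi^*_{Hopf} h_{f_2} \bigr)$. Define
\begin{equation*}
h_{n, p, \mu} := c_1^2 d\alpha^2 + c_2^2 \pi^*_{Hopf} h_{f_2}
\end{equation*}
as a Riemannian metric on the lens space $\mathbb{S}^3 / \Gamma_{n, 1, p}$ (using the identification $\mathbb{S}^3 / \Gamma' \cong \mathbb{S}^3 / \Gamma_{n, 1, p}$ via the biholomorphism of Section \ref{sectionblowup}), and set $R_\mu := R + c_3$ and $K_\mu := \{r \leq R\}$; the complement is isometric to $A_{R_\mu, \infty}(o) \subset C(\mathbb{S}^3 / \Gamma_{n, 1, p})$.

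The most delicate bullet is the local product structure at the singular point $(0, 0, [0, 1])$, which corresponds to $r = 0$, $\xi = 0$. Here Lemma \ref{lemmaedgesingularities}(ii) gives $\phi_\mu \equiv 1$ and Lemma \ref{lemmakappariccipositive}(i) gives $f_2(\xi) = \sin(2\xi)/2$ exactly on a neighborhood of $\xi = 0$, so the base $\mathbb{S}^2$-metric $h_{f_2}$ is literally the round metric of radius $1/2$ there. The obstruction to an exact product decomposition is the Hopf connection cross term $\cos^2 \xi \, d\alpha \, d\beta$ in $g_{\rho,\phi,f}$, which is of order $O(1)$ near $\xi = 0$. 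To realize the stated isometry with $(W_{\sin(2r)/2} \times W_{\rho_\mu(r)/n})/G_{n, p, \mu}$, I would modify the Hopf connection in a small $\xi$-neighborhood of $0$ by replacing $\cos^2\xi$ in the connection 1-form $d\alpha + \cos^2\xi \, d\beta$ with $\eta(\xi)\cos^2\xi$ for a smooth cutoff $\eta$ vanishing near $\xi = 0$. After this modification the metric splits exactly as $(dr^2 + \rho_\mu^2 d\alpha^2) + (d\xi^2 + (\sin(2\xi)/2)^2 d\beta^2)$ in the neighborhood, and the $\Gamma' = \Gamma_{n, 1, 1} \oplus \Gamma_{p, 0, 1}$-quotient descends it (after unfolding $\alpha$ and $\beta$ to period $2\pi$) to a ball in $(W_{\rho_\mu/n} \times W_{\sin(2r)/2})/G_{n, p, \mu}$, with $G_{n, p, \mu} < \mathbb{S}^1 \times \mathbb{S}^1$ capturing the residual identifications from the Hopf twist outside the cutoff region. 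The main obstacle is to verify that this cutoff preserves $\Ric \geq 0$: one must redo the Ricci computation (via Proposition \ref{proptotallygeodesiccalculation} with the modified O'Neill tensor $A$) in the thin transition region where $\eta$ varies, and absorb any loss by a small further perturbation of $\rho_\mu$ or $\phi_\mu$ there, in parallel with the argument of Lemma \ref{lemmaedgesingularities}.
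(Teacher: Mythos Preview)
Your outline up through the asymptotic cone is exactly what the paper does: instantiate Lemma~\ref{lemmakappariccipositive} and Lemma~\ref{lemmaedgesingularities} at $\kappa=2$ with $\varepsilon\leq\mu$, read off $g_{n,p,\mu}=g_{\rho_{2,\mu},\phi_{2,\mu},f_2}$, and obtain the cone structure at infinity from the linear growth of $\rho,\phi$ for $r\geq R$.

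The gap is in your treatment of the product structure near $(0,0,[0,1])$. The paper does \emph{not} cut off $\cos^2\xi$ in the connection form; it first passes to the coordinate $\hat F$ in which the Hopf connection coefficient is $+n\sin^2\hat\xi$ rather than $-\cos^2\xi$ (same horizontal distribution, different gauge), and then multiplies that by $1-\eta_{\sigma_1}(r)\eta_{\sigma_2}(\hat\xi)$. This is the crucial point: in the $\hat F$ gauge the twist is $O(\hat\xi^{2})$ near the pole, so the cutoff is a \emph{second-order} perturbation and the quantities $\psi_{\hat\xi}/\sin(2\hat\xi)$ and $\psi_{\hat\xi}/\sin^2(2\hat\xi)$ appearing in the Ricci formula stay bounded. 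Your proposal $\cos^2\xi\mapsto\eta(\xi)\cos^2\xi$ is an $O(1)$ change in the transition region, and the analogous $\psi_\xi/\sin(2\xi)$ then scales like $1/\sigma^2$; absorbing that by ``a small further perturbation of $\rho_\mu$ or $\phi_\mu$'' is not what the paper does, and it is not clear it can be done without spoiling the other bullets.

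Second, the paper cuts off in \emph{both} $r$ and $\hat\xi$, confining the modification to the region $V_{\xi_0}$ where $\phi_{2,\mu}\equiv1$ and $f_2(\hat\xi)=\sin(2\hat\xi)/2$ exactly. This is what allows the explicit Ricci computation in the modified frame $\{X_1,X_2,X_3,X_4\}$ and the specific choice $\sigma_1=\xi_0/200$, $\sigma_2=\sigma_1/(200n^2)$, followed by taking $\mu$ small. Your $\xi$-only cutoff would alter the metric along the entire fibre over $[0,1]$, including the region $r\geq R$ where $\phi$ is no longer constant; you would then need a separate Ricci verification there, which you do not provide.
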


\begin{proof}
By letting $\kappa =2$ and $\varepsilon \leq \mu $ in Lemma \ref{lemmakappariccipositive} and Lemma \ref{lemmaedgesingularities}, one can obtain a Riemannian metric $\tilde{g}_{n,p,\mu}$ on $(\mathcal{O}(-n) /\Gamma_{p,0,1} ) \sq ( \proj^{-1} ([1,0]) \cup \{(0,0,[0,1]) \} ) $, a Riemannian metric $h_{n,p,\mu}$ on $\mathbb{S}^3 /\Gamma_{n,1,p} $, a positive constant $R_\mu >0$, a non-negative function $\rho_\mu$ on $[0,r_0]$, and a compact subset $K_\mu \subset (\mathcal{O}(-n) /\Gamma_{p,0,1} ) $ containing the singular point $ (0,0,[0,1]) $ satisfying the properties except the last one. So we only need to modify the metric $\tilde{g}_{n,p,\mu}$ near the point $(0,0,[0,1])$ to satisfy the last property.

Now we consider the coordinate:
\begin{equation*}
\hat{F}:\left\{
\begin{aligned}
\relax \left( 0,\frac{\pi}{2} \right) \times ( 0,2\pi )\times (0,2\pi ) \;\;\;\;& \longrightarrow \;\;\;\;\;\;\;\;\;\;\;\;\;\;\;\;\;\;\;\;\;\;\;\; \mathbb{S}^3 /\mathbb{Z}_n ,\\
(\hat{\xi} ,\hat{\alpha} ,\hat{\beta})  \;\;\;\;\;\;\;\;\;\;\;\;\;\;\;\; &\longmapsto \;\;\;\; \left( \sin\xi e^{\frac{\hat{\alpha}}{n} \sqrt{-1}} , \cos \xi e^{(\frac{\hat{\alpha}}{n} +\hat{\beta} )\sqrt{-1}} \right) , \\
\end{aligned}
\right.  
\end{equation*}
and the Hopf fibration $\pi_{Hopf} ( z_1 , z_2 ) = \left( z_1 \bar{z}_2 , \frac{1}{2} (|z_1|^2 -|z_2|^2 ) \right) $. Since $\pi_{Hopf} ( e^{\zeta \sqrt{-1}} z_1 , e^{\zeta \sqrt{-1}} z_2 ) = \pi_{Hopf} ( z_1 , z_2 ) $, $\forall \zeta\in\mathbb{R} $, we see that $ \pi_{Hopf} : \mathbb{S}^3 /\mathbb{Z}_n \to \mathbb{S}^2 $ is well-defined. It is easy to see that the basic horizontal lift of $D_{\pi_{Hopf}} ( \frac{\partial }{\partial \hat{\xi} } )$ and $D_{\pi_{Hopf}} ( \frac{\partial }{\partial \hat{\beta} } )$ are $\frac{\partial }{\partial \hat{\xi} }$ and $\frac{\partial }{\partial \hat{\beta} } + n\sin^2 (\hat{\xi}) \frac{\partial }{\partial \hat{\alpha} } $, respectively. Let $\xi_0 $, $f_{2}$ and $ \phi_{2,\mu} $ be the data in Lemma \ref{lemmakappariccipositive} and Lemma \ref{lemmaedgesingularities}. Then 
$$ \left\{ \frac{\partial}{\partial r} ,\;\;\; \frac{n}{\rho_{\mu} (r) } \frac{\partial}{\partial \hat{\alpha}} ,\;\;\; \frac{1}{\phi_{2,\mu} (r) } \frac{\partial}{\partial \hat{\xi}} ,\;\;\; \frac{1}{\phi_{2,\mu} (r) f_2 (\hat{\xi})} \left( \frac{\partial }{\partial \hat{\beta} } + n\sin^2 (\hat{\xi}) \frac{\partial }{\partial \hat{\alpha} } \right) \right\} $$
gives an orthonormal basis corresponding to $\tilde{g}_{n,p,\mu}$ on this coordinate. Moreover, for any $ \hat{\xi} \in [0,\frac{\xi_0}{2}] $, we have $\phi_{2,\mu} (\hat{\xi}) = 1 $ and $f_{2} (\hat{\xi}) = \frac{\sin (2\hat{\xi})}{2} $. Hence when $r,\hat{\xi} \in [0,\frac{\xi_0}{2}] $, the orthonormal basis above corresponding to $\tilde{g}_{n,p,\mu}$ becomes
$$ \left\{ \frac{\partial}{\partial r} ,\;\;\; \frac{n}{\rho_{\mu} (r) } \frac{\partial}{\partial \hat{\alpha}} ,\;\;\;  \frac{\partial}{\partial \hat{\xi}} ,\;\;\; \frac{2}{ \sin (2\hat{\xi})} \left( \frac{\partial }{\partial \hat{\beta} } + n\sin^2 (\hat{\xi}) \frac{\partial }{\partial \hat{\alpha} } \right) \right\} .$$

Now we glue $ ( W_{\frac{\sin (2r)}{2}} \times W_{\frac{\rho_\mu (r)}{n}}  ) $ to $\mathcal{O}(-n)$ near the point $(0,0,[0,1])$.

For any $\sigma \in \left( 0,\frac{\xi_0}{100} \right) $, set $\eta_\sigma (t)$ be the cut-off function on $[0,1]$ such that $\eta_\sigma (t) =1 $ on $[0,\sigma ]$, $\eta_\sigma (t) =0 $ on $[2\sigma ,1]$, and $| \eta'_\sigma | \leq \frac{2}{\sigma} $. Then we consider the four vector fields:
$$ X_1 = \frac{\partial}{\partial r} ,\;\;\; X_2 = \frac{n}{\rho_{\mu} (r) } \frac{\partial}{\partial \hat{\alpha}} ,\;\;\; X_3 = \frac{\partial}{\partial \hat{\xi}} ,\;\;\; X_4 = \frac{2}{ \sin (2\hat{\xi})} \left( \frac{\partial }{\partial \hat{\beta} } + n(1-\eta_{\sigma_1} (r) \eta_{\sigma_2} (\hat{\xi}) )\sin^2 (\hat{\xi}) \frac{\partial }{\partial \hat{\alpha} } \right) ,$$
where $\sigma_1 ,\sigma_2 \in \left( 0,\frac{\xi_0}{100} \right) $ are positive constants to be determined. One can see that there exists a unique smooth Riemannian metric $\tilde{g}_{\mu ,\sigma_1 ,\sigma_2}$ on $V_{\xi_0} 
=\left\{ (r,\hat{\xi} ,\hat{\alpha} ,\hat{\beta}) :\;\; r,\hat{\xi} \in [0,\frac{\xi_0}{2}] \right\} $ such that $\tilde{g}_{\mu ,\sigma_1 ,\sigma_2} (X_i ,X_j ) =\delta_{i,j} $, where $\delta_{i,j}$ is the Kronecker symbol.

For abbreviation, we write $\psi (r,\hat{\xi})$ instead of $n( \eta_{\sigma_1} (r) \eta_{\sigma_2} (\hat{\xi}) -1 )\sin^2 (\hat{\xi})$, and we will subscript the variable that we differentiated with respect to. For example, we abbreviate $\frac{\partial^2 \psi }{\partial r \partial \hat{\xi} }$ to $\psi_{r\hat{\xi}}$ or $\psi_{ \hat{\xi}r}$. By a straightforward calculation similar to the computation below Fact \ref{factwarpedbergersphere}, we can conclude that
\begin{eqnarray}
\Ric_{\tilde{g}_{\mu ,\sigma_1 ,\sigma_2}} (X_1) & = & \left( -\frac{\rho''_\mu}{\rho_\mu} - \frac{2\rho_\mu^2 \psi_r^2 }{n^2 \sin^2 (2\hat{\xi})} \right) X_1 -\frac{2\rho_\mu^2 \psi_r \psi_{\hat{\xi}} }{n^2 \sin^2 (2\hat{\xi})} X_2 , \\
\Ric_{\tilde{g}_{\mu ,\sigma_1 ,\sigma_2}} (X_2) & = & \left(4 - \frac{2\rho_\mu^2 \psi_{\hat{\xi}}^2 }{n^2 \sin^2 (2\hat{\xi})} \right) X_2 -\frac{2\rho_\mu^2 \psi_r \psi_{\hat{\xi}} }{n^2 \sin^2 (2\hat{\xi})}  X_1 ,\\
\Ric_{\tilde{g}_{\mu ,\sigma_1 ,\sigma_2}} (X_3) & = & \left( -\frac{\rho''_\mu}{\rho_\mu} + \frac{2\rho_\mu^2 \psi_r^2 }{n^2 \sin^2 (2\hat{\xi})}  + \frac{2\rho_\mu^2 \psi_{\hat{\xi}}^2 }{n^2 \sin^2 (2\hat{\xi})} \right) X_3 \\
& & - \left( \frac{3\rho'_\mu \psi_r }{ n\sin (2\hat{\xi})} + \frac{\rho_\mu \psi_r^2}{n\sin (2\hat{\xi})} + \frac{\rho_\mu \psi_{\hat{\xi}}^2 }{n\sin (2\hat{\xi})} - \frac{2\rho_\mu  \psi_{\hat{\xi}} \cos (2\hat{\xi}) }{n\sin^2  (2\hat{\xi})}  \right) X_4 ,\nonumber\\
\Ric_{\tilde{g}_{\mu ,\sigma_1 ,\sigma_2}} (X_4) & = & \left( 4 - \frac{2\rho_\mu^2 \psi_r^2 }{n^2 \sin^2 (2\hat{\xi})}  - \frac{2\rho_\mu^2 \psi_{\hat{\xi}}^2 }{n^2 \sin^2 (2\hat{\xi})}  \right) X_4 \\
& & - \left( \frac{3\rho'_\mu \psi_r }{ n\sin (2\hat{\xi})} + \frac{\rho_\mu \psi_r^2}{n\sin (2\hat{\xi})} + \frac{\rho_\mu \psi_{\hat{\xi}}^2 }{n\sin (2\hat{\xi})} - \frac{2\rho_\mu  \psi_{\hat{\xi}} \cos (2\hat{\xi}) }{n\sin^2  (2\hat{\xi})}  \right) X_3 . \nonumber
\end{eqnarray}
By definition, we have $|\rho_\mu|\leq 2n\mu$, $|\rho'_{\mu}|\leq n$, $-\rho''_{\mu} \geq 4(1-\mu) \rho_\mu $ and $\left| \frac{\psi_r }{\sin (2\hat{\xi})} \right| \leq \frac{ 2n \sigma_2 }{\sigma_1}  $. Fix $\sigma_1 = \frac{\xi_0}{200} $ and $\sigma_2 = \frac{\sigma_1 }{200n^2} $. Then $\left| \frac{3\rho'_\mu \psi_r }{ n\sin (2\hat{\xi})} \right| \leq \frac{1}{100} $. By choosing $\mu$ small enough, one can get
\begin{eqnarray}
    \sum_{i=1}^2 \left( \left| \Ric_{\tilde{g}_{\mu ,\sigma_1 ,\sigma_2}} (X_{2i-1}) + \frac{\rho''_\mu}{\rho_\mu} X_{2i-1} \right| + \left| \Ric_{\tilde{g}_{\mu ,\sigma_1 ,\sigma_2}} (X_{2i}) - 4 X_{2i} \right| \right) \leq \frac{1}{10} ,
\end{eqnarray}
and hence $ \Ric_{\tilde{g}_{\mu ,\sigma_1 ,\sigma_2}} \geq 0 $. Let
\begin{equation*}
g_{n,p,\mu } :\left\{
\begin{aligned}
\tilde{g}_{\mu ,\sigma_1 ,\sigma_2} &,\;\;\;\;\;\;\;\;\;\; \textrm{ on $\;\;\;\;\;\;\;\;\;\; V_{\xi_0} $} ,\\
\tilde{g}_{n,p,\mu} &,\;\;\;\;\;\;\;\;\;\; \textrm{ on $(\mathcal{O}(-n) /\Gamma_{p,0,1} ) \sq V_{\xi_0} $} , \\
\end{aligned}
\right.  
\end{equation*}
and $r_0 = \frac{\xi_0}{1000} $. Note that $\Gamma_{p,0,1}$ is a finite subgroup of the automorphism group generated by $\frac{\partial}{\partial \hat{\alpha}}$ and $\frac{\partial}{\partial \hat{\beta}} $, which gives the last condition we need. This proves the proposition.
\end{proof}

Then we focus on the Riemannian metric on $ \mathcal{W}_\mu = W_{\frac{\sin (2r)}{2}} \times W_{\frac{\rho_\mu (r)}{n}}$. By the warped product structures of $W_{\frac{\sin (2r)}{2}} $ and $ W_{\frac{\rho_\mu (r)}{n}}$, one can get a natural coordinate $(r_1 ,\theta_1 ,r_2 ,\theta_2 )$, and the metric
$$ g_{\mathcal{W}} = g_{W_{\frac{\sin (2r)}{2}}} + g_{W_{\frac{\rho_\mu (r)}{n}}} = dr_1^2 + \frac{\sin^2 (2r_1)}{4} d\theta_1^2 + dr_2^2 + \frac{\rho^2_\mu (r_2)}{n^2} d\theta_2^2 .$$

Write $\gamma =\sqrt{r^2_1 + r^2_2 } \in [0,r_0] $ and $\theta = \arcsin \left( \frac{r_2}{\gamma} \right) \in [0,\frac{\pi}{2}] $. Then $r_1 = \gamma \cos \theta $, $r_2 = \gamma \sin \theta $, and
$$ g_{\mathcal{W}} = d\gamma^2 + \gamma^2 \left( d\theta^2 + \frac{\sin^2 (2\gamma \cos \theta)}{4 \gamma^2 } d\theta_1^2 + \frac{\rho^2_\mu (\gamma \sin \theta)}{n^2 \gamma^2 } d\theta_2^2 \right) .$$

Since we need to modify the metric under the restriction that it remains invariant under the $\mathbb{T}^2 = (\mathbb{S}^1 \times \mathbb{S}^1 )$-action, we are now constructing a family of metrics that satisfy this condition. By using the coordinate $(\gamma ,\theta ,\theta_1 ,\theta_2)$ constructed above, we obtain the following fact.

\begin{fact}\label{factT2invarantmetric}
Let $ \Phi ,\Psi , \Upsilon$ be non-negative smooth functions on $ (0,r_0] \times [0,\frac{\pi}{2} ] $ such that:
\begin{itemize}
    \item $\Phi >0$ on $ (0,r_0] \times [0,\frac{\pi}{2} ] $, $\frac{\partial^{(odd)}\Phi (\gamma ,\theta ) }{\partial \theta^{(odd)} } (\gamma ,0) = \frac{\partial^{(odd)}\Phi (\gamma ,\theta ) }{\partial \theta^{(odd)} } (\gamma ,\frac{\pi}{2} ) = 0 $,
    \item $\Psi >0$ on $ (0,r_0] \times [0,\frac{\pi}{2} ) $, $\frac{\partial^{(odd)}\Psi (\gamma ,\theta ) }{\partial \theta^{(odd)} } (\gamma ,0) = \frac{\partial^{(even)}\Psi (\gamma ,\theta ) }{\partial \theta^{(even)} } (\gamma ,\frac{\pi}{2} ) = \Psi (\gamma ,\frac{\pi}{2} ) = 0 $,\\ $\frac{\partial \Psi (\gamma ,\theta ) }{\partial \theta } (\gamma ,\frac{\pi}{2} ) = -\Phi (\gamma ,\frac{\pi}{2} ) $,
    \item $\Upsilon >0$ on $ (0,r_0] \times (0,\frac{\pi}{2} ] $, $\frac{\partial^{(odd)}\Upsilon (\gamma ,\theta ) }{\partial \theta^{(odd)} } (\gamma ,\frac{\pi}{2}) = \frac{\partial^{(even)}\Upsilon (\gamma ,\theta ) }{\partial \theta^{(even)} } (\gamma ,0 ) = \Upsilon (\gamma ,0 ) = 0 $,\\ $\frac{\partial \Psi (\gamma ,\theta ) }{\partial \theta } (\gamma ,0 ) = \Phi (\gamma ,0 ) $.
\end{itemize}
Then we have the following metrics.
\begin{itemize}
    \item For any $\gamma \in (0,r_0 ] $, the metric on $ [0,\frac{\pi}{2} ] \times [0,\frac{\pi}{2} ] \times [0,\frac{\pi}{2} ] $,
    $$ g_{\Phi_{\gamma } , \Psi_\gamma , \Upsilon_\gamma } = \Phi (\gamma ,\theta) d\theta^2 + \Psi (\gamma ,\theta) d\theta_1^2 + \Upsilon (\gamma ,\theta) d\theta_2^2  $$
    gives a $\mathbb{T}^2$-invariant smooth Riemannian metric on $\mathbb{S}^3$, where the $\mathbb{T}^2$-action is induced by $\frac{\partial}{\partial \theta_1}$ and $\frac{\partial}{\partial \theta_2}$.
    \item The metric on $ (0,r_0 ] \times [0,\frac{\pi}{2} ] \times [0,\frac{\pi}{2} ] \times [0,\frac{\pi}{2} ] $,
    $$ g_{\Phi , \Psi , \Upsilon } = d\gamma^2 + \Phi (\gamma ,\theta) d\theta^2 + \Psi (\gamma ,\theta) d\theta_1^2 + \Upsilon (\gamma ,\theta) d\theta_2^2  $$
    gives a $\mathbb{T}^2$-invariant smooth Riemannian metric on $(0,r_0 ] \times \mathbb{S}^3$, where the $\mathbb{T}^2$-action is induced by $\frac{\partial}{\partial \theta_1}$ and $\frac{\partial}{\partial \theta_2}$.
\end{itemize}
\end{fact}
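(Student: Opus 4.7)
The fact asserts that the coordinate expressions for $g_{\Phi_\gamma,\Psi_\gamma,\Upsilon_\gamma}$ and $g_{\Phi,\Psi,\Upsilon}$ extend to smooth Riemannian metrics across the two loci $\{\theta=0\}$ and $\{\theta=\pi/2\}$ where one of the $S^1$-factors of the $\mathbb{T}^2$-action collapses. Since the $\partial_\gamma$-direction is transverse to both collapses, the second statement reduces to the first by carrying the $d\gamma^2$-summand along, and away from these two loci the metric is manifestly smooth and positive definite because $\Phi,\Psi,\Upsilon>0$ there. So everything reduces to smooth extension at the two polar circles $\theta=0$ (where the $\theta_2$-circle shrinks) and $\theta=\pi/2$ (where the $\theta_1$-circle shrinks).

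\textbf{Plan.} The argument is the standard smoothness criterion for warped products along a collapsing circle, applied at each polar circle separately. Near $\theta=0$ I introduce quasi-Cartesian coordinates $(u_1,u_2)=(\theta\cos\theta_2,\theta\sin\theta_2)$, so that $d\theta^2+\theta^2\,d\theta_2^2=du_1^2+du_2^2$, and rewrite
\begin{equation*}
\Phi\,d\theta^2+\Upsilon\,d\theta_2^2=\Phi(\gamma,0)(du_1^2+du_2^2)+\bigl(\Phi-\Phi(\gamma,0)\bigr)d\theta^2+\Bigl(\tfrac{\Upsilon}{\theta^2}-\Phi(\gamma,0)\Bigr)\theta^2\,d\theta_2^2.
\end{equation*}
By the classical criterion that an $SO(2)$-invariant function on $\mathbb{R}^2$ is smooth iff it is a smooth function of $u_1^2+u_2^2=\theta^2$, smoothness at $\theta=0$ reduces to checking that $\Phi$, $\Psi$ and $\Upsilon/\theta^2$ each extend as smooth functions of $\theta^2$. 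The assumption that odd $\theta$-derivatives of $\Phi$ and $\Psi$ vanish at $\theta=0$ handles the first two coefficients; the vanishing of $\Upsilon$ and its even $\theta$-derivatives at $\theta=0$ together with the matching condition pinning the leading $\theta^2$-coefficient of $\Upsilon$ to $\Phi(\gamma,0)$ handles the third. The argument at $\theta=\pi/2$ is identical after substituting $\theta'=\pi/2-\theta$ and swapping the roles of $\Psi$ and $\Upsilon$; the parity and matching hypotheses on $\Psi$ at $\theta=\pi/2$ are what the same lemma requires.

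\textbf{Expected main obstacle.} There is essentially none: the statement is a bookkeeping exercise and the hypotheses are set up so that the standard warped-product smoothness criterion (cf.\ \cite[Proposition 1.4.7]{pp1}) applies verbatim. The one point worth double-checking is that along, say, $\theta=0$ the non-collapsing $\theta_1$-circle still has positive radius $\sqrt{\Psi(\gamma,0)}$, and one needs the parity hypothesis on $\Psi$ at $\theta=0$ to ensure $\Psi\,d\theta_1^2$ extends smoothly across the axis as a function of $(u_1,u_2)$ — but this is again the same $SO(2)$-invariance criterion. Smooth dependence on $\gamma$ is then automatic because all the data are smooth in $\gamma$ and $\partial_\gamma$ is transverse to both singular loci, so $g_{\Phi,\Psi,\Upsilon}$ inherits smoothness on $(0,r_0]\times S^3$ from the $\gamma$-slice-by-slice version.
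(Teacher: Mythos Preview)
The paper states this as a ``Fact'' and gives no proof; it is treated as a routine application of the standard smoothness criterion for doubly warped products (the same criterion the paper invokes earlier via \cite[Proposition~1.4.7]{pp1}). Your approach---passing to quasi-Cartesian coordinates near each collapsing circle and using the even/odd parity hypotheses to verify that the coefficient functions extend smoothly---is exactly that standard argument, so there is nothing to compare.

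One remark: the statement as printed in the paper contains evident typos. Comparing with the orthonormal frame $Y_2=\frac{1}{\Phi}\partial_\theta$, $Y_3=\frac{1}{\Psi}\partial_{\theta_1}$, $Y_4=\frac{1}{\Upsilon}\partial_{\theta_2}$ used immediately afterward, the intended metric is $\Phi^2\,d\theta^2+\Psi^2\,d\theta_1^2+\Upsilon^2\,d\theta_2^2$ rather than $\Phi\,d\theta^2+\Psi\,d\theta_1^2+\Upsilon\,d\theta_2^2$, and the matching condition in the third bullet should read $\partial_\theta\Upsilon(\gamma,0)=\Phi(\gamma,0)$ (not $\partial_\theta\Psi$). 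With those corrections your parity bookkeeping goes through verbatim; in particular the quantity you should be checking near $\theta=0$ is that $\Upsilon/\theta$ (not $\Upsilon/\theta^2$) extends smoothly and evenly with value $\Phi(\gamma,0)$, which is precisely what the corrected hypotheses give.
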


Let $Y_1=\frac{\partial}{\partial \gamma}$, $Y_2= \frac{1}{\Phi} \frac{\partial}{\partial \theta} $, $Y_3= \frac{1}{\Psi} \frac{\partial}{\partial \theta_1} $ and $Y_4= \frac{1}{\Upsilon} \frac{\partial}{\partial \theta_2} $. Then $\{ Y_1 ,Y_2 ,Y_3 ,Y_4 \}$ is an orthonormal basis. For abbreviation, we will subscript the variable that we differentiated with respect to. For example, we abbreviate $\frac{\partial^2 \Psi }{\partial \gamma \partial \theta }$ to $\Psi_{\gamma \theta}$ or $\Psi_{ \theta \gamma}$. By direct calculation, we obtain:
\begin{eqnarray}
\Ric_{g_{\Phi , \Psi , \Upsilon }} (Y_1) & = & -\left( \frac{\Phi_{\gamma\gamma}}{\Phi} + \frac{\Psi_{\gamma\gamma}}{\Psi}  + \frac{\Upsilon_{\gamma\gamma}}{\Upsilon} \right) Y_1 - \left( \frac{1}{\Psi } \frac{\partial}{\partial \gamma} \left( \frac{\Psi_\theta}{\Phi} \right) + \frac{1}{\Upsilon } \frac{\partial}{\partial \gamma} \left( \frac{\Upsilon_\theta }{\Phi} \right) \right) Y_2 , \\
\Ric_{g_{\Phi , \Psi , \Upsilon }} (Y_2) & = & - \left( \frac{1}{\Psi } \frac{\partial}{\partial \gamma} \left( \frac{\Psi_\theta}{\Phi} \right) + \frac{1}{\Upsilon } \frac{\partial}{\partial \gamma} \left( \frac{\Upsilon_\theta }{\Phi} \right) \right) Y_1 ,\\
& & + \left( -\frac{\Phi_{\gamma\gamma}}{\Phi} - \frac{\Psi_{\theta\theta}}{\Phi^2 \Psi} - \frac{\Upsilon_{\theta\theta}}{\Phi^2 \Upsilon} + \frac{\Phi_\theta \Psi_\theta }{\Phi^3 \Psi} + \frac{\Phi_\theta \Upsilon_\theta}{\Phi^3 \Upsilon} - \frac{\Phi_\gamma \Psi_\gamma}{\Phi \Psi} - \frac{\Phi_\gamma \Upsilon_\gamma}{\Phi \Upsilon}  \right) Y_2 ,\nonumber\\
\Ric_{g_{\Phi , \Psi , \Upsilon }} (Y_3) & = & \left( -\frac{\Psi_{\gamma\gamma}}{\Psi} - \frac{\Psi_{\theta\theta}}{\Phi^2 \Psi} +\frac{\Phi_\theta \Psi_\theta }{\Phi^3 \Psi} - \frac{\Phi_\gamma \Psi_\gamma}{\Phi \Psi} - \frac{\Psi_\gamma \Upsilon_\gamma }{\Psi\Upsilon} - \frac{\Psi_\theta \Upsilon_\theta}{\Phi^2 \Psi\Upsilon } \right) Y_3 ,\\
\Ric_{g_{\Phi , \Psi , \Upsilon }} (Y_4) & = & \left( -\frac{\Upsilon_{\gamma\gamma}}{\Upsilon} - \frac{\Upsilon_{\theta\theta}}{\Phi^2 \Upsilon} +\frac{\Phi_\theta \Upsilon_\theta }{\Phi^3 \Upsilon} - \frac{\Phi_\gamma \Upsilon_\gamma}{\Phi \Upsilon} - \frac{\Psi_\gamma \Upsilon_\gamma }{\Psi\Upsilon} - \frac{\Psi_\theta \Upsilon_\theta}{\Phi^2 \Psi\Upsilon } \right) Y_4 .
\end{eqnarray}

Now we are ready to add a conical singular point on $\mathcal{W}_\mu = W_{\frac{\sin (2r)}{2}} \times W_{\frac{\rho_\mu (r)}{n}}$.

\begin{lmm}\label{addaconicalsingularpoint}
There are constants $\mu_0 = \mu_0 (r_0) \in (0,\frac{1}{10}) $, $\sigma = \sigma (r_0) \in (0,r_0) $ and $\zeta = \zeta (r_0) \in (0,1) $ satisfying the following properties. For any $\mu \in (0,\mu_0 ) $, there are a $\mathbb{T}^2$-invariant metric space $(\mathcal{U} ,d_{\mathcal{U}})$, a compact subset $K_\mu \subset \mathcal{U} $, a point $p\in K$, and a Riemannian manifold $(\mathcal{U} \sq\{p\} ,g_{\mathcal{U}} )$ with $\Ric_{g_{\mathcal{U}}} \geq 0 $, such that
\begin{enumerate}[\rm (i).]
    \item The restriction of $d_{\mathcal{U}}$ on $\mathcal{U} \sq\{p\}$ is given by $g_{\mathcal{U}}$,
    \item The Riemannian manifold $(\mathcal{U} \sq K ,g_{\mathcal{U}} , \mathbb{T}^2 )$ is equivariant isometric to $\left( \mathcal{W}_\mu \sq Z_\mu , \mathbb{T}^2 \right) $ for some $\mathbb{T}^2$-invariant compact subset $Z_\mu \subset \mathcal{W}_\mu $,
    \item The metric ball $(B_{\frac{\sigma}{2}} (p) ,d_{\mathcal{U}} , \mathbb{T}^2 )$ is equivariant isometric to the metric space given by
    $$ g_{\mathcal{U} ,\sigma ,\mu } = d\gamma^2 + (1-\zeta)^2 \gamma^2 \left( d\theta^2 + \frac{\sin^2 (2\sigma \cos \theta )}{4\sigma^2 } d\theta_1^2 + \frac{\rho_\mu^2 (\sigma \sin \theta ) }{n^2 \sigma^2 } d\theta_2^2 \right) $$
    on $(0,r_0 ] \times \mathbb{S}^3$, where $\rho_\mu $ is the function in Proposition \ref{propositionlocalproductstructure}.
\end{enumerate}
\end{lmm}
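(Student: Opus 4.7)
I will build $\mathcal{U}$ by attaching a truncated cone with small angle defect at a tip $p$ to $\mathcal{W}_\mu$ with a metric ball removed, joined by a smooth $\mathbb{T}^2$-invariant transition collar. Fix $\sigma = \sigma(r_0)$ and $\zeta = \zeta(r_0)$ small, to be specified below, and set
\begin{equation*}
\tilde{h}_\sigma = d\theta^2 + \frac{\sin^2(2\sigma\cos\theta)}{4\sigma^2}d\theta_1^2 + \frac{\rho_\mu^2(\sigma\sin\theta)}{n^2\sigma^2}d\theta_2^2,
\end{equation*}
the normalized link metric on $\mathbb{S}^3$ appearing in the statement. By condition (iii) of Proposition \ref{propositionlocalproductstructure}, the factors $\frac{\sin(2\sigma\cos\theta)}{2\sigma\cos\theta}$ and $\frac{\rho_\mu(\sigma\sin\theta)}{n\sigma\sin\theta}$ are $C^2$-close to $1$ when $\sigma$ and $\mu$ are small, so $\tilde{h}_\sigma$ is $C^2$-close to the round $\mathbb{S}^3$-metric $d\theta^2 + \cos^2\theta\,d\theta_1^2 + \sin^2\theta\,d\theta_2^2$; in particular $\Ric_{\tilde{h}_\sigma} \geq (2-\eta)\tilde{h}_\sigma$ with $\eta = \eta(\sigma,\mu)$ arbitrarily small. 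Writing $g_{\mathcal{U},\sigma,\mu} = d\gamma^2 + \gamma^2 h'$ with $h' = (1-\zeta)^2\tilde{h}_\sigma$, the standard $4$-dimensional cone computation gives $\Ric_{g_{\mathcal{U},\sigma,\mu}} = 0$ radially and $\Ric_{g_{\mathcal{U},\sigma,\mu}}|_{\mathrm{ang}} = \Ric_{h'} - 2h' = \Ric_{\tilde{h}_\sigma} - 2(1-\zeta)^2\tilde{h}_\sigma$, which is nonnegative provided $(1-\zeta)^2 \leq 1 - \eta/2$. Fixing $\zeta$ to satisfy this gives the inner cone piece with $\Ric \geq 0$ away from the tip.

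Next, pick $r_1 \in (\sigma, r_0/2)$ and take as the outer piece the exterior region $\mathcal{W}_\mu \setminus B_{r_1}(0)$ equipped with $g_\mathcal{W}$. To join the two pieces along the annulus $\gamma \in [\sigma/2, r_1]$, use the $\mathbb{T}^2$-invariant ansatz of Fact \ref{factT2invarantmetric}: choose smooth non-decreasing interpolants $a, b : [\sigma/2, r_1] \to (0, \infty)$ with $a(\gamma) = (1-\zeta)\gamma$, $b(\gamma) = \sigma$ near $\gamma = \sigma/2$, and $a(\gamma) = b(\gamma) = \gamma$ near $\gamma = r_1$, and set
\begin{equation*}
\Phi = a(\gamma)^2, \qquad \Psi = \frac{a(\gamma)^2}{b(\gamma)^2}\cdot\frac{\sin^2(2b(\gamma)\cos\theta)}{4}, \qquad \Upsilon = \frac{a(\gamma)^2}{b(\gamma)^2}\cdot\frac{\rho_\mu^2(b(\gamma)\sin\theta)}{n^2}.
\end{equation*}
A direct check shows this metric glues smoothly to the inner cone across $\gamma = \sigma/2$ and to $g_\mathcal{W}$ across $\gamma = r_1$. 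Combining the three pieces yields $(\mathcal{U} \setminus \{p\}, g_\mathcal{U})$, and metric-completing with the tip $p$ gives $(\mathcal{U}, d_\mathcal{U})$. Taking $K_\mu$ to be the image of $\{\gamma \leq r_1\}$ under the gluing, conditions (i)--(iii) hold by construction.

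The main obstacle is verifying $\Ric_{g_\mathcal{U}} \geq 0$ on the transition annulus. Using the Ricci formulas for $g_{\Phi,\Psi,\Upsilon}$ stated immediately before the lemma, the $Y_1$-diagonal term is $-\Phi_{\gamma\gamma}/\Phi - \Psi_{\gamma\gamma}/\Psi - \Upsilon_{\gamma\gamma}/\Upsilon$, and the three angular diagonal terms are perturbations of the corresponding entries for $\mathcal{W}_\mu$ (which are nonnegative by Proposition \ref{propositionlocalproductstructure}) and for the cone (which are positive by Step 1). One chooses $a$ strictly concave so that $-\Phi_{\gamma\gamma}/\Phi$ contributes a definite positive amount to the radial Ricci, and $b$ with $|b'|,|b''|$ uniformly small. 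The crucial off-diagonal term is
\begin{equation*}
\Ric(Y_1,Y_2) = -\frac{1}{\Psi}\frac{\partial}{\partial\gamma}\!\left(\frac{\Psi_\theta}{\Phi}\right) - \frac{1}{\Upsilon}\frac{\partial}{\partial\gamma}\!\left(\frac{\Upsilon_\theta}{\Phi}\right),
\end{equation*}
and since $\Psi_\theta/\Phi = b(\gamma)^{-2}\partial_\theta(\sin^2(2b\cos\theta)/4)$ depends on $\gamma$ only through $b$, this quantity is $O(|b'|)$ and vanishes at both endpoints where $b$ is locally constant or $b = \gamma$. Stretching the transition interval so that $|a'-1|, |a''|, |b'|, |b''|$ are all much smaller than $\eta$ and $\zeta$ then makes the Ricci matrix diagonally dominant, hence positive semidefinite, on the collar. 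The simultaneous solvability of all these inequalities pins down the admissible range $\mu < \mu_0(r_0)$ and the choices $\sigma(r_0), \zeta(r_0)$.
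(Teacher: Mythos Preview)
Your ansatz is essentially the paper's: the paper also writes the transition metric in the form $g_{\Phi,\Psi,\Upsilon}$ with $\Phi$ depending on $\gamma$ alone and $\Psi,\Upsilon$ obtained from $\psi_\mathcal{W},\upsilon_\mathcal{W}$ evaluated at a reparametrized radius. Your $a$ corresponds to the paper's $\varphi$ and your $b$ to the paper's $\eta_\delta$.

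The justification for $\Ric\geq 0$ on the collar, however, has a genuine gap. You claim that by stretching the transition interval one can make $|a'-1|,|a''|,|b'|,|b''|$ all much smaller than $\eta$ and $\zeta$, but this is impossible for $b$: since $b$ equals the constant $\sigma$ near the inner boundary and the identity $\gamma$ near the outer boundary, $b'$ runs from $0$ to $1$, and no stretching makes $\sup|b'|$ small. Your diagonal--dominance argument therefore does not go through. (Relatedly, the off--diagonal term $\Ric(Y_1,Y_2)$ does not vanish when $b=\gamma$: already for $\mathcal{W}_\mu$ itself it equals $-(4+\rho_\mu''/\rho_\mu)\sin\theta\cos\theta$, which is small only because $-\rho_\mu''/\rho_\mu\approx 4$, not because of $b'$.)

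The paper circumvents this by separating the two deformations. In Part~1 the link is left at $\gamma$ (i.e.\ $b\equiv\gamma$) and only $\varphi$ is bent from $\gamma$ to $(1-\zeta)\gamma$; the relevant smallness is $|\varphi''|+|\varphi-\gamma\varphi'|\leq r_0^4/100$, and positivity of the $2\times 2$ block in $(Y_1,Y_2)$ comes from an explicit determinant using $-\rho_\mu''/\rho_\mu\geq 4(1-\mu)$. In Part~2, with $\varphi=(1-\zeta)\gamma$ already in place, the link is frozen via a \emph{convex} function $\eta_\delta$; the point is not that $\eta_\delta'$ is small but that convexity together with $0\leq\eta_\delta'\leq 1$, $\eta_\delta\leq\gamma$, and the approximations $\tfrac{\rho_\mu'}{\rho_\mu}\,t\approx 1$, $t\cot t\approx 1$ for small $t$ force the sign--sensitive combinations to come out right. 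The angle defect $\zeta$ inherited from Part~1 then supplies a margin of order $\zeta\gamma^{-2}$ in the $Y_3,Y_4$ directions that absorbs the remaining $O(\mu)$ errors. So the two--step decomposition is not cosmetic: the $\zeta$ produced in the first step is precisely what makes the second step close, and doing both simultaneously as you propose would require a different, more careful argument than a perturbation estimate.
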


\begin{proof}
Since the renormalized metric sphere of $(0,0)\in \mathcal{W}_\mu$ does not satisfy $\Ric >1$, our construction here is divide into two parts. The first part is to shrink the metric sphere, and the second part is to modify the metric inside the metric ball to the cone metric.

We now consider the metric $g_{\Phi , \Psi , \Upsilon }$ constructed above under the restriction $\Phi (\gamma ,\theta) = \varphi (\gamma) $, $ \Psi (\gamma ,\theta) = \varphi (\gamma) \phi (\gamma ,\theta) $ and $\Upsilon (\gamma ,\theta) = \varphi (\gamma) \upsilon (\gamma ,\theta) $. In this case, we have
\begin{eqnarray}
\Ric_{g_{\Phi , \Psi , \Upsilon }} (Y_1) & = & -\left( 3\frac{\varphi''}{\varphi} + 2\frac{\varphi' \psi_\gamma}{\varphi \psi} + 2\frac{ \varphi' \upsilon_\gamma}{\varphi \upsilon} + \frac{\psi_{\gamma\gamma}}{\psi} + \frac{\upsilon_{\gamma\gamma}}{\upsilon} \right) Y_1 - \left( \frac{\psi_{\gamma\theta}}{\varphi\psi} + \frac{\upsilon_{\gamma\theta}}{\varphi\upsilon} \right) Y_2 , \\
\Ric_{g_{\Phi , \Psi , \Upsilon }} (Y_2) & = & - \left( \frac{\psi_{\gamma\theta}}{\varphi\psi} + \frac{\upsilon_{\gamma\theta}}{\varphi\upsilon} \right) Y_1 ,\\
& & + \left( -\frac{\varphi''}{\varphi} - \frac{\psi_{\theta\theta}}{\varphi^2 \psi} - \frac{\upsilon_{\theta\theta}}{\varphi^2 \upsilon} - 2\frac{\varphi'^2 }{\varphi^2 } - \frac{\varphi' \psi_\gamma}{\varphi \psi} - \frac{\varphi' \upsilon_\gamma}{\varphi \upsilon}  \right) Y_2 ,\nonumber\\
\Ric_{g_{\Phi , \Psi , \Upsilon }} (Y_3) & = & \left( -\frac{\varphi''}{\varphi} - 2\frac{\varphi'^2}{\varphi^2} - 4\frac{\varphi'\psi_\gamma}{\varphi \psi} -\frac{\psi_{\gamma\gamma}}{\psi} - \frac{\psi_{\theta\theta}}{\varphi^2 \psi} - \frac{\varphi'\upsilon_\gamma}{\varphi\upsilon} -\frac{\psi_\gamma \upsilon_\gamma}{\psi\upsilon} - \frac{\psi_\theta \upsilon_\theta}{\varphi^2 \psi\upsilon } \right) Y_3 ,\\
\Ric_{g_{\Phi , \Psi , \Upsilon }} (Y_4) & = & \left( -\frac{\varphi''}{\varphi} - 2\frac{\varphi'^2}{\varphi^2} - 4\frac{\varphi'\upsilon_\gamma}{\varphi \upsilon} -\frac{\upsilon_{\gamma\gamma}}{\upsilon} - \frac{\upsilon_{\theta\theta}}{\varphi^2 \upsilon} - \frac{\varphi'\psi_\gamma}{\varphi \psi} - \frac{\psi_\gamma \upsilon_\gamma}{\psi\upsilon} - \frac{\psi_\theta \upsilon_\theta}{\varphi^2 \psi\upsilon } \right) Y_4 .
\end{eqnarray}

Write $\varphi_\mathcal{W} = \gamma$, $\psi_\mathcal{W} = \frac{\sin (2\gamma \cos \theta)}{2\gamma} $ and $\upsilon_\mathcal{W} 
= \frac{\rho_\mu (\gamma \sin \theta)}{n\gamma} $. Then the metric $g_\mathcal{W}$ on $\mathcal{W}_\mu$ is just the metric $g_{\Phi , \Psi , \Upsilon }$ in the case $\varphi = \varphi_\mathcal{W} $, $\psi = \psi_\mathcal{W} $ and $\upsilon = \upsilon_\mathcal{W} $.

\smallskip

\par {\em Part 1:} First, we choose $\psi = \psi_\mathcal{W} $ and $\upsilon = \upsilon_\mathcal{W} $. By a direct calculation, one can obtain
\begin{eqnarray}
\Ric_{g_{\Phi , \Psi , \Upsilon }} (Y_1 ,Y_1) & = & -3\frac{\varphi''}{\varphi} + 4\cos^2 \theta - \frac{\rho''_\mu}{\rho_\mu} \sin^2\theta \\
& & + \left( \frac{2}{\gamma^2} - \frac{2\varphi'}{\gamma \varphi} \right) \left( 2-2\cot (2\gamma \cos \theta ) \gamma \cos \theta - \frac{\rho_\mu' \gamma \sin\theta}{\rho_\mu} \right) ,\nonumber\\
\Ric_{g_{\Phi , \Psi , \Upsilon }} (Y_1 ,Y_2) & = & - \frac{\gamma}{\varphi} \left( 4 + \frac{\rho_\mu''}{\rho} \right) \sin \theta \cos \theta ,\\
\Ric_{g_{\Phi , \Psi , \Upsilon }} (Y_2 ,Y_2) & = & -\frac{\varphi''}{\varphi} + \frac{\gamma^2 }{\varphi^2 } \left( 4\sin^2 \theta - \frac{\rho''_\mu}{\rho_\mu} \cos^2\theta \right) + 2\frac{\varphi'}{\varphi} \left( \frac{1}{\gamma} - \frac{\varphi'}{\varphi} \right) \\
& & + \left( \frac{2}{\varphi^2} - \frac{2\varphi'}{\gamma \varphi} \right) \left( 2-2\cot (2\gamma \cos \theta ) \gamma \cos \theta - \frac{\rho_\mu' \gamma \sin\theta}{\rho_\mu} \right) ,\nonumber\\
\Ric_{g_{\Phi , \Psi , \Upsilon }} (Y_3 ,Y_3) & = & -\frac{\varphi''}{\varphi} + 4 \left( \frac{\gamma^2}{\varphi^2} \sin^2 \theta + \cos^2\theta \right) - \left( \frac{\varphi'}{\varphi} -\frac{1}{\gamma} \right) \left( \frac{2\varphi'}{\varphi} -\frac{3}{\gamma} \right) \\
& & + 2 \left( \frac{\gamma}{\varphi^2} -\frac{4\varphi'}{\varphi} +\frac{3}{\gamma} \right) \cot (2\gamma \cos \theta ) \cos \theta + \left( \frac{1}{\gamma} -\frac{ \varphi'}{\varphi} \right) \frac{\rho'_\mu}{\rho_\mu} \sin\theta \nonumber \\
& & + \left( \frac{\gamma^2}{\varphi^2} -1 \right) \frac{2\rho'_\mu}{\rho_\mu} \cot (2\gamma \cos \theta ) \cos \theta \sin\theta ,\nonumber \\
\Ric_{g_{\Phi , \Psi , \Upsilon }} (Y_4 ,Y_4) & = &  -\frac{\varphi''}{\varphi} - \frac{\rho_\mu''}{\rho_\mu} \left( \frac{\gamma^2}{\varphi^2} \cos^2 \theta + \sin^2\theta \right) - \left( \frac{\varphi'}{\varphi} -\frac{1}{\gamma} \right) \left( \frac{2\varphi'}{\varphi} -\frac{3}{\gamma} \right) \\
& & + 2 \left( \frac{1}{\gamma} -\frac{ \varphi'}{\varphi} \right) \cot (2\gamma \cos \theta ) \cos \theta + \left( \frac{\gamma}{\varphi^2} -\frac{4\varphi'}{\varphi} +\frac{3}{\gamma} \right) \frac{\rho'_\mu}{\rho_\mu} \sin\theta \nonumber \\
& & + \left( \frac{\gamma^2}{\varphi^2} -1 \right) \frac{2\rho'_\mu}{\rho_\mu} \cot (2\gamma \cos \theta ) \cos \theta \sin\theta \nonumber .
\end{eqnarray}
In the above equations, the independent variables of the functions with respect to $\varphi$ are all taken as $\gamma$, and the independent variables of the functions with respect to $\rho_\mu$ are all taken as $\rho_\mu (\gamma \sin\theta )$, so we can omit both for convenience. For example, we abbreviate $\rho_\mu' (\gamma \sin\theta )$ to $\rho_\mu'$.

By the construction of $\rho_\mu$, one can see that $\left|\frac{\rho'_\mu}{\rho_\mu} \gamma \sin\theta \right| + | \gamma \cos \theta \cot (2\gamma \cos \theta ) | \leq 2 + \mu $. Fix a small constant $\zeta = \zeta (r_0) >0 $ and a function $\varphi_1 \in C^\infty ([0,r_0 ]) $ such that
\begin{itemize}
    \item $ |\varphi''_1| + |\varphi_1 - \gamma  \varphi'_1 | \leq \frac{r_0^4}{100} $,
    \item $(1-\zeta ) \gamma \leq \varphi_1 (\gamma) \leq \gamma $,
    \item $\varphi_1 (\gamma) = (1-\zeta ) \gamma $ on $[0,\frac{r_0}{2}]$,
    \item $\varphi_1 (\gamma) = \gamma $ on $[ \frac{r_0}{2} ,r_0 ]$.
\end{itemize}
Now we return to estimating the lower bound of Ricci curvature. Let $\varphi=\varphi_1$. Then the above construction implies that $\Ric_{g_{\Phi , \Psi , \Upsilon }} (Y_3 ,Y_3) \geq 3 $ and $\Ric_{g_{\Phi , \Psi , \Upsilon }} (Y_4 ,Y_4) \geq 3 $. For $Y_1$ and $Y_2$, note that the Ricci curvature here has a mixing term, so we need to control the mixing term. By the construction above again, we have
\begin{eqnarray}
\Ric_{g_{\Phi , \Psi , \Upsilon }} (Y_1 ,Y_1) & = &  4\cos^2 \theta - \frac{\rho''_\mu}{\rho_\mu} \sin^2\theta - \frac{1}{10} >0 ,\\
\Ric_{g_{\Phi , \Psi , \Upsilon }} (Y_2 ,Y_2) & = & \frac{\gamma^2 }{\varphi^2 } \left( 4\sin^2 \theta - \frac{\rho''_\mu}{\rho_\mu} \cos^2\theta  -\frac{1}{10} \right) >0 ,
\end{eqnarray}
and hence
\begin{eqnarray}
& &  \frac{\varphi^2}{\gamma^2} \left( \Ric_{g_{\Phi , \Psi , \Upsilon }} (Y_1 ,Y_1) \Ric_{g_{\Phi , \Psi , \Upsilon }} (Y_2 ,Y_2) - \Ric_{g_{\Phi , \Psi , \Upsilon }} (Y_1 ,Y_2)^2 \right) \nonumber \\
& \geq & \left(4\cos^2 \theta - \frac{\rho''_\mu}{\rho_\mu} \sin^2\theta - \frac{1}{10} \right) \left( 4\sin^2 \theta - \frac{\rho''_\mu}{\rho_\mu} \cos^2\theta  -\frac{1}{10} \right) - \left( 4 + \frac{\rho_\mu''}{\rho} \right)^2 \sin^2 \theta \cos^2 \theta \\
& = & -\frac{39}{10} \frac{\rho_\mu''}{\rho_\mu} - \frac{2}{5} \geq 0. \nonumber
\end{eqnarray}

It follows that $\Ric_{g_{\Phi , \Psi , \Upsilon }} \geq 0 $ if we choose $\varphi=\varphi_1$, $\psi = \psi_\mathcal{W} $ and $\upsilon = \upsilon_\mathcal{W} $.

\smallskip

\par {\em Part 2:} Set $\hat{\sigma} \in (0,\frac{r_0}{10} ) $ be a constant to be specified later. Fix a constant $\delta \in (0,\frac{\hat{\sigma}}{10}) $. Let $\eta_\delta $ be a $C^\infty$ convex function on $ [0 , \frac{r_0}{2} ] $ such that 
\begin{itemize}
    \item $\eta_\delta (\gamma) = \gamma $ on $[\hat{\sigma} , \frac{r_0}{2} ]$,
    \item $\eta_\delta (\gamma) = \hat{\sigma} - \frac{\delta}{2} $ on $[0,\hat{\sigma} - \delta ]$.
\end{itemize}

Let $ \varphi (\gamma) =(1-\zeta ) \gamma $, $ \psi (\gamma ,\theta) = \psi_{\mathcal{W}} (\eta_\delta (\gamma ) ,\theta ) $ and $ \upsilon (\gamma ,\theta) = \upsilon_{\mathcal{W}} (\eta_\delta (\gamma ) ,\theta ) $.

By $ \left|\frac{\rho'_{ \mu } (r) \sin (2 r) }{ 2 \rho_{ \mu } (r) \cos (2 r) } -1 \right| \leq \mu $, one can find constants $\mu_1 , \gamma_1 \in (0,\frac{r_0}{10} ) $ such that if $\mu\in (0,\mu_1 ) $, $\gamma \in (0,\gamma_1) $, then $ \frac{\rho'_{ \mu } (\gamma) }{ \rho_{ \mu } (\gamma) } \gamma \geq 1-\frac{\zeta}{10} $ and $ \gamma \cot \gamma \geq 1-\frac{\zeta}{10} $. Moreover, for each given $\gamma_2 \in (0,\frac{r_0}{10} ) $, there exists a constant $\mu_2 = \mu_2 (\gamma_2) $ such that for any $\gamma \in (\gamma_2 , \frac{r_0}{10} ) $ and $\mu\in (0,\mu_2 )$, we have
\begin{eqnarray}
    \frac{\rho_\mu' (\gamma \sin\theta ) }{\rho_\mu (\gamma \sin\theta )} \gamma\sin\theta \leq 1 .
\end{eqnarray}

Assume that $\hat{\sigma} \in (0,\gamma_1 ) $ and $\mu\in (0,\mu_1) \cap (0,\mu_2 (\hat{\sigma} -\delta ) ) $. Then we have
\begin{eqnarray}
\Ric_{g_{\Phi , \Psi , \Upsilon }} (Y_1 ,Y_1) & = & \left( \frac{2\eta'_{\delta}}{\gamma \eta_{\delta}} - \frac{2\eta'^2_{\delta}}{ \eta^2_{\delta}} + \frac{\eta''_{\delta}}{ \eta_{\delta}} \right) \left( 2- \frac{\rho_\mu' }{\rho_\mu } \eta_{\delta}\sin\theta - 2\cot (2\gamma \cos \theta ) \eta_{\delta} \cos \theta \right) \nonumber\\
& & + \eta_\delta'^2 \left( 4\cos^2 \theta - \frac{\rho_\mu''  }{\rho_\mu } \sin^2 \theta \right) \\
& \geq & \eta_\delta'^2 \left( 4\cos^2 \theta - \frac{\rho_\mu''  }{\rho_\mu } \sin^2 \theta \right) .\nonumber
\end{eqnarray}
In the above inequalities,  the independent variables of the functions with respect to $\eta_\delta $ are all taken as $\gamma$, and the independent variables of the functions with respect to $\rho_\mu$ are all taken as $ \eta_\delta (\gamma) \sin\theta $, so we can omit both for convenience. For example, we abbreviate $\rho_\mu' (\eta_\delta (\gamma) \sin\theta )$ to $\rho_\mu'$.

Similarly, one can obtain
\begin{eqnarray}
\Ric_{g_{\Phi , \Psi , \Upsilon }} (Y_1 ,Y_2) & = & - \frac{\eta_\delta \eta'_\delta}{(1-\zeta ) \gamma} \left( 4 + \frac{\rho_\mu''}{\rho} \right) \sin \theta \cos \theta ,\\
\Ric_{g_{\Phi , \Psi , \Upsilon }} (Y_2 ,Y_2) & \geq & \frac{ \eta_\delta^2}{(1-\zeta )^2 \gamma^2} \left( 4\sin^2 \theta - \frac{\rho''_\mu}{\rho_\mu} \cos^2\theta  \right) ,
\end{eqnarray}
and hence
\begin{eqnarray}
\Ric_{g_{\Phi , \Psi , \Upsilon }} (Y_1 ,Y_1) \Ric_{g_{\Phi , \Psi , \Upsilon }} (Y_2 ,Y_2) - \Ric_{g_{\Phi , \Psi , \Upsilon }} (Y_1 ,Y_2)^2 & 
\geq & 0 .
\end{eqnarray}

Now we estimate $\Ric_{g_{\Phi , \Psi , \Upsilon }} (Y_3 ,Y_3)$ and $\Ric_{g_{\Phi , \Psi , \Upsilon }} (Y_4 ,Y_4)$. Assume that $\hat{\sigma} \in (0,\gamma_1 ) $ and $\mu\in (0,\mu_1) \cap (0,\mu_2 (\hat{\sigma} -\delta ) ) $. Hence we have
\begin{eqnarray}
\Ric_{g_{\Phi , \Psi , \Upsilon }} (Y_3 ,Y_3) & = & \frac{ 1 }{(1-\zeta )^2 \gamma^2} \left( 2 \frac{\rho_\mu' }{\rho_\mu } \eta_{\delta} \cot (2\gamma \cos \theta ) \eta^2_{\delta} \sin \theta \cos \theta - (1-\zeta )^2 \right) \nonumber\\
& & + \frac{ 1 }{(1-\zeta )^2 \gamma^2} \left( 2 \cot (2\gamma \cos \theta ) \eta_{\delta} \cos \theta - (1-\zeta )^2 \right) \nonumber\\
& & + 4 \frac{\eta'_\delta}{\gamma \eta_\delta} \left( 1 - 2 \cot (2\gamma \cos \theta ) \eta_{\delta} \cos \theta \right) + \frac{\eta'_\delta}{\gamma \eta_\delta} \left( 1 - \frac{\rho_\mu' }{\rho_\mu } \eta_{\delta} \sin \theta \right) \nonumber\\
& & + \frac{\eta'^2_\delta}{ \eta^2_\delta} \left( 6 \cot (2\gamma \cos \theta ) \eta_{\delta} \cos \theta -3 +4 \cos^2 \theta \right) + 4 \frac{\eta_\delta^2}{(1-\zeta)^2 \gamma^2} \sin^2 \theta \\
& & + \left( \frac{\eta'^2_\delta}{ \eta^2_\delta} \frac{\rho_\mu' }{\rho_\mu } \eta_{\delta} \sin \theta + \frac{\eta''_\delta}{ \eta_\delta} \right)  \left( 1- 2 \cot (2\gamma \cos \theta ) \eta_{\delta} \cos \theta \right) \nonumber\\
& \geq & \frac{\zeta}{2(1-\zeta)^2 \gamma^2} - \frac{\mu}{\gamma^2} - \frac{3\mu}{\gamma^2} = \frac{4}{ \gamma^2} \left( \frac{\zeta}{8(1-\zeta)^2 } -  \mu \right) .\nonumber
\end{eqnarray}
Hence $\Ric_{g_{\Phi , \Psi , \Upsilon }} (Y_3 ,Y_3) \geq 0 $ when $ \mu \in (0, \frac{\zeta}{8(1-\zeta)^2 } ) $. Similarly, there exists a constant $\mu_3 = \mu_3 (\zeta) \in (0, \frac{\zeta}{8(1-\zeta)^2 } ) $ such that for any $\hat{\sigma} \in (0,\gamma_1 ) $ and $\mu\in (0,\mu_1) \cap (0,\mu_2 (\hat{\sigma} -\delta ) ) \cap (0, \mu_3 ) $, we have $\Ric_{g_{\Phi , \Psi , \Upsilon }} (Y_4 ,Y_4) \geq 0 $. Then we can prove this lemma by choosing $\sigma = \hat{\sigma} - \delta $ and $\mu_0 = \frac{1}{2} \min \{ \mu_1 ,\mu_2 (\hat{\sigma} -\delta ) , \mu_3 \} $.
\end{proof}

\begin{rmk}
In our construction, one can see that the metric
$$ g_{\mathbb{S}^3 ,\sigma ,\mu } = (1-\zeta)^2 \left( d\theta^2 + \frac{\sin^2 (2\sigma \cos \theta )}{4\sigma^2 } d\theta_1^2 + \frac{\rho_\mu^2 (\sigma \sin \theta ) }{n^2 \sigma^2 } d\theta_2^2 \right) $$
on $ \mathbb{S}^3$ satisfies $\Ric_{g_{\mathbb{S}^3 ,\sigma ,\mu }} \geq \left( 2+\frac{\zeta}{100} \right) g_{\mathbb{S}^3 ,\sigma ,\mu } $.
\end{rmk}

\subsection{Modify the tangent cones}
In the previous step, we have largely succeeded in constructing the space we need to use for desingularization. All we have to do now is to modify the tangent cone at $(0,0,[0,1])$ and the asymptotic cone at infinity to cones over round spheres. Note that we can use the following construction given by Colding-Naber \cite{coldingnaber2} in this step.

\begin{lmm}[{\cite[Lemma 2.1]{coldingnaber2}}]
\label{lmmcoldingnaberexamplelemma}
Let $X^{n-1}$ be a smooth compact manifold with $g(s)$, a smooth family of metrics $s\in [0,1] $ such that
\begin{enumerate}[\rm i).]
\item $\Ric_{g(s)} \geq (n-2) g(s) $.
\item $\frac{d}{ds} dv(g(s)) =0 $, where $dv$ is the associated volume form.
\end{enumerate}
Then for any pair $(\lambda_1,\lambda_2)$ such that $1\geq \lambda_1>\lambda_2>0$, there exist a constant $R>0$ and a metric $d$ on $(C(X) ,o) = ([0,\infty )\times X /\sim ,0) $ given by $dr^2 + f(r)^2 g(h(r)) $, such that $B_1 (o) \subset (C(X) ,d,o) $ is isometric to the unit ball $B_1 (o) \subset (C(X) ,dr^2 + \lambda_1 r^2 g(0) ,o) $, and the annulus $	A_{R,\infty } (o) \subset (C(X) ,d,o) $ is isometric to the annulus $	A_{R,\infty } (o) \subset (C(X) ,dr^2 + \lambda_2 r^2 g(1) ,o) $. 
\end{lmm}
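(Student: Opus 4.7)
The natural ansatz is a doubly-warped metric
\[
g \;=\; dr^2 \,+\, f(r)^2\, g(h(r))
\]
on $C(X) = [0,\infty) \times X / {\sim}$, with $f \colon [0,\infty) \to [0,\infty)$ smooth, $f(0) = 0$, and $h \colon [0,\infty) \to [0,1]$ smooth nondecreasing. The prescribed ball/annulus conditions force
\[
f(r) = \sqrt{\lambda_1}\, r,\ h(r) \equiv 0 \text{ on } [0,1]; \qquad f(r) = \sqrt{\lambda_2}\,(r+c_0),\ h(r) \equiv 1 \text{ on } [R',\infty),
\]
for some $R' > 1$, $c_0 > 0$. The shift $r \mapsto r + c_0$ identifies the outer end isometrically with an annulus in the cone $(C(X), dr^2 + \lambda_2 r^2 g(1))$, so the constant in the statement is $R := R' + c_0$. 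The task is to interpolate $f,h$ on $[1,R']$ so that $\Ric^g \geq 0$.

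For the Ricci computation, I introduce the shape operator of the $r$-level set
\[
S(r) \;=\; \tfrac{f'(r)}{f(r)}\,\mathrm{Id} \,+\, h'(r)\,T(h(r)), \qquad T(s) := \tfrac{1}{2}\,g(s)^{-1}\,g'(s).
\]
Hypothesis (ii) is exactly $\mathrm{tr}\,T \equiv 0$, so the mean curvature reduces to the classical warped-product value $H = (n-1)f'/f$. The Riccati equation along $\partial_r$ and the Gauss equation on a horizontal vector $V$, combined with the assumption $\Ric_{g(s)} \geq (n-2)\,g(s)$, give schematically
\begin{align*}
\Ric^g(\partial_r, \partial_r) &\;=\; -(n-1)\,\tfrac{f''}{f} \,-\, h'^{2}\,\mathrm{tr}\bigl(T(h)^2\bigr),\\
\Ric^g(V, V) &\;\geq\; \Bigl[-\,\tfrac{f''}{f} \,+\, (n-2)\,\tfrac{1-(f')^2}{f^2}\Bigr]\, |V|_g^2 \,+\, E(V,V),
\end{align*}
where the error $E$ is linear in $h',h''$ with coefficients built from the bounded tensors $T(h), T'(h)$ and from $f'/f$.

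I would then take $f$ concave on $[0,\infty)$ and affine outside $[1,R']$, which makes $-f''/f \geq 0$ with strict positivity on the transition interval; moreover $f' \leq \sqrt{\lambda_1} \leq 1$ throughout, so $1-(f')^2 \geq 0$, and both principal terms are nonnegative. The $h$-dependent error $E$ and the correction $h'^{2}\,\mathrm{tr}(T^2)$ can then be made as small as desired by lengthening the transition interval $[1,R']$, that is, by taking $h$ with $|h'|+|h''|$ arbitrarily small, while $T(h(r))$ and $T'(h(r))$ remain uniformly bounded by compactness of $[0,1]$. The main obstacle is precisely this matching of $f$ and $h$ on $[1,R']$: one must ensure the $h$-derivative errors are absorbed into the strictly positive principal terms even where the horizontal gain $1-(f')^2$ is small (near $r = 1$, where $f' = \sqrt{\lambda_1}$). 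This is carried out by an ODE-type argument requiring $h'^{2} \ll |f''|$ pointwise, which is achievable by first fixing a concave $f$ with enough strict concavity on $[1,R']$ and then choosing $h$ slowly varying on the same interval. Once the matching is done, $\Ric^g \geq 0$ follows from the schematic bound, and the ball/annulus isometries hold by construction.
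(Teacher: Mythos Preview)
The paper does not give its own proof of this lemma; it simply records that ``the proof is exactly the same as the proof of \cite[Lemma 2.1]{coldingnaber2}.'' Your sketch is precisely the Colding--Naber argument: the ansatz $dr^2+f(r)^2 g(h(r))$, the observation that hypothesis (ii) forces $\mathrm{tr}\,T\equiv 0$ so that the mean curvature is the pure warped value $(n-1)f'/f$, the choice of a concave $f$ interpolating between the two linear profiles, and the absorption of the $h$-derivative errors by taking $h$ slowly varying on a long interval --- this is exactly how the original proof proceeds, and your outline is correct.
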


The proof of Lemma \ref{lmmcoldingnaberexamplelemma} is exactly the same as the proof of \cite[Lemma 2.1]{coldingnaber2}. 

Now we begin to modify the tangent cone at $(0,0,[0,1])$. Due to Colding-Naber's lemma, we only need to connect $g_{\mathbb{S}^3 ,\sigma ,\mu }$ and the round sphere by a family of metrics.

For any $s\in [0,1]$, set 
\begin{eqnarray}
\hat{g} (s) & = & \left( 1-\frac{999\zeta}{1000} \right)^2 \left( d\theta^2 + \left( (1-s) \cos \theta + s \frac{\sin  (2\sigma \cos \theta )}{2\sigma } \right)^2 d\theta_1^2 \right.\\
& & \left.\;\;\;\;\;\;\;\;\;\;\;\;\;\;\;\;\;\;\;\;\;\;\;\;\;\;\;\;\;\; + \left( (1-s) \sin \theta + s \frac{\rho_\mu (\sigma \sin \theta ) }{n \sigma } \right)^2 d\theta_2^2 \right) .\nonumber
\end{eqnarray}
It is easy to see that $\Ric_{\hat{g} (s)} \geq 2 \hat{g} (s) $, and $\Vol (\hat{g}(s))$ is decreasing. Then the family of metrics $\tilde{g}(s) = \frac{\Vol (\hat{g}(1)) }{\Vol (\hat{g}(s)) } \hat{g}(s) $ have the same volume. Hence there exists a family of differeomorphisms $ \mathfrak{F} (s,x) : [0,1] \times \mathbb{S}^3 \to \mathbb{S}^3 $ such that $\mathfrak{F} (1,x)=x$, and the volume form of $\mathfrak{F} (s,\cdot)^* \tilde{g}(s) $ is independent of $s$. Since $\hat{g} (s)$ are $\mathbb{T}^2$-invariant, one can see that $\mathfrak{F} (s,\cdot)^* \tilde{g}(s) $ is also $\mathbb{T}^2$-invariant. Hence we can choose $\lambda_1 =1 $, $\lambda_2 = \frac{1000-1000\zeta}{1000-999\zeta} $, and $g(s)=\mathfrak{F} (s,\cdot)^* \tilde{g}(s) $.

Then we consider the asymptotic cone.

By a classical result of Hamilton \cite{rhamilton1}, for any $3$-manifold $(M,g)$ with $\Ric_g >0$, there exists a family of metrics $g(t) $ on $M$ such that $g(0)=g$, $\Ric_{g(t)} >0 $, and the rescaled metrics $\tilde{g(t)}$ converge to a space form $\mathbb{S}^3 /G $ as $t\to T$ in the Cheeger-Gromov sense. Then one can use Lemma \ref{lmmcoldingnaberexamplelemma}.

Combining Proposition \ref{propositionlocalproductstructure} with the argument above, one can prove the following result, and Proposition \ref{propinductioncyclic} follows.

\begin{prop}\label{propinductioncyclicdetailedver}
Let $(n,p)$ be a coprime pair of integers, and $n>p\geq 1$. Then for any $\delta >0$, there exist
\begin{itemize}
    \item A Riemannian metric $g_{n,p,\delta }$ on $(\mathcal{O}(-n) /\Gamma_{p,0,1} ) \sq ( \proj^{-1} ([1,0]) \cup \{(0,0,[0,1]) \} ) $,
    \item Positive constants $\epsilon = \epsilon (n,p,\delta) $, $r_\delta = r_\delta (n,p,\delta) $, $R_\delta = R_\delta (n,p,\delta) $,
    \item A compact subset $K_\delta \subset (\mathcal{O}(-n) /\Gamma_{p,0,1} ) $ containing the singular point $ (0,0,[0,1]) $,
\end{itemize}
such that
\begin{itemize}
    \item The extension of the metric $g_{n,p,\delta }$ on $(\mathcal{O}(-n) /\Gamma_{p,0,1} ) \sq \{ (0,0,[0,1]) \}$ gives a smooth Riemannian manifold structure on $(\mathcal{O}(-n) /\Gamma_{p,0,1} ) \sq \{ (0,0,[0,1]) \}$ with $\Ric_{g_{n,p,\delta}} \geq 0$,
    \item The Riemannian manifold $((\mathcal{O}(-n) /\Gamma_{p,0,1} ) \sq K_\delta ,g_{n,p,\delta} )$ is isometric to the annulus $A_{R_\delta ,\infty } (o) $ in the metric cone $ (C(\mathbb{S}_{\epsilon}^3 /\Gamma_{n,1,p} ) ,o ) $,
    \item The metric ball $B_{r_\epsilon} ((0,0,[0,1])) \subset \mathcal{O}(-n) /\Gamma_{p,0,1} $ is isometric to the metric ball $B_{r_\epsilon} (o) \subset C(\mathbb{S}_{\delta}^3 / \Gamma_{p,-1,n } ) $,
\end{itemize}
where $\proj^{-1} ([1,0]) \subset \mathcal{O}(-n) $ is the fiber over $[1,0] \in \mathbb{C}P^1 $.
\end{prop}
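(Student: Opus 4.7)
The plan is to assemble Proposition~\ref{propinductioncyclicdetailedver} by stitching together the constructions of the three preceding subsections. Starting from Proposition~\ref{propositionlocalproductstructure} and then applying Lemma~\ref{addaconicalsingularpoint} inside the ball $B_{r_0}((0,0,[0,1]))$, for every sufficiently small $\mu$ I obtain a metric on $(\mathcal{O}(-n)/\Gamma_{p,0,1}) \setminus \proj^{-1}([1,0])$, smooth away from the isolated singular point, with $\Ric \geq 0$, which is an exact Euclidean cone over $(\mathbb{S}^3/\Gamma_{n,1,p}, h_{n,p,\mu})$ outside a compact set, and which near the singular point is isometric to the open cone $dr^2 + r^2 g_{\mathbb{S}^3,\sigma,\mu}$ over a $\mathbb{T}^2$-invariant metric on $\mathbb{S}^3/\Gamma_{p,-1,n}$ satisfying $\Ric \geq (2+\zeta/100)\,g_{\mathbb{S}^3,\sigma,\mu}$. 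The only remaining task is to replace each ``non-round'' conical cross section by a round sphere metric while preserving $\Ric \geq 0$.

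I then perform two applications of Colding--Naber's Lemma~\ref{lmmcoldingnaberexamplelemma}. For the tangent cone at the singular point, I connect $g_{\mathbb{S}^3,\sigma,\mu}$ to a round metric on $\mathbb{S}^3/\Gamma_{p,-1,n}$ through the explicit $\mathbb{T}^2$-invariant family $\hat g(s)$ written at the end of the previous subsection: linear interpolation of the warping factors $\cos\theta \leftrightarrow \sin(2\sigma\cos\theta)/(2\sigma)$ and $\sin\theta \leftrightarrow \rho_\mu(\sigma\sin\theta)/(n\sigma)$, which manifestly satisfies $\Ric_{\hat g(s)} \geq 2\hat g(s)$ and has monotone volume. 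An equivariant Moser argument then produces $\mathbb{T}^2$-preserving diffeomorphisms $\mathfrak{F}(s,\cdot)$ whose pullbacks give a family $g(s)$ of constant volume, meeting the hypotheses of Lemma~\ref{lmmcoldingnaberexamplelemma}; choosing the inner target radius to be $\delta$ yields a transition metric that is the exact round cone of radius $\delta$ near $r=0$ and matches $dr^2+r^2 g_{\mathbb{S}^3,\sigma,\mu}$ for $r$ bounded below. Symmetrically, for the asymptotic cone I first invoke Hamilton's theorem for the Ricci flow on three-manifolds with positive Ricci curvature to deform $h_{n,p,\mu}$ through $\mathbb{T}^2$-invariant metrics to a round sphere metric on $\mathbb{S}^3/\Gamma_{n,1,p}$; after the same volume-normalization step, Lemma~\ref{lmmcoldingnaberexamplelemma} produces a transition from the intermediate cone to an exact round cone of some radius $\epsilon=\epsilon(n,p,\delta)>0$.

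The main technical obstacle I expect is ensuring full compatibility with the orbifold quotient structures on both ends throughout the two interpolations. Both $\Gamma_{n,1,p}$ and $\Gamma_{p,-1,n}$ sit inside the $\mathbb{T}^2$ acting on the respective cross sections; the interpolating family $\hat g(s)$ is manifestly $\mathbb{T}^2$-invariant, Hamilton's flow preserves the full isometry group of its initial metric, and the Moser step can be carried out equivariantly because the relevant volume forms are $\mathbb{T}^2$-invariant with equal total mass, so the quotient by the discrete subgroup stays smooth in each case. Once the two Colding--Naber outputs are glued to the intermediate conical regions (which is automatic, since the intermediate metric is already an exact cone in a neighborhood of each end), the resulting metric $g_{n,p,\delta}$ inherits $\Ric\geq 0$ and all the stated isometric identifications, with the constants $r_\delta$, $R_\delta$, $\epsilon$, and $K_\delta$ read off directly from the two applications of Lemma~\ref{lmmcoldingnaberexamplelemma}. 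No new curvature estimate is needed beyond those already established in Lemmas~\ref{lemmakappariccipositive}--\ref{addaconicalsingularpoint}.
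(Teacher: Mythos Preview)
Your proposal is correct and follows essentially the same route as the paper: assemble Proposition~\ref{propositionlocalproductstructure} and Lemma~\ref{addaconicalsingularpoint}, then apply Lemma~\ref{lmmcoldingnaberexamplelemma} twice, once with the explicit interpolating family $\hat g(s)$ at the singular point and once with Hamilton's Ricci flow at infinity, exactly as the paper does in the paragraphs immediately preceding the statement. Your added remarks on equivariance (the $\mathbb{T}^2$-invariant Moser step and the isometry-preservation of Ricci flow) make explicit what the paper leaves implicit, but the argument is the same.
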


\section{Resolution surgery with \texorpdfstring{$\Ric \geq 0$}{Lg} : the general case}
\label{sectionsurgeryriccigeneral}

In this section, we will give another resolution surgery with $\Ric \geq 0$ can be used in the non-cyclic case. The statement of this surgery in the non-cyclic case is similar to Proposition \ref{propinductioncyclic}.

\begin{prop}\label{propinductiongeneral}
Let $\Gamma $ be a non-trivial finite subgroup of $U(2) \subset O(4)$ acts free on the unit sphere $\mathbb{S}^3$. Then for any $\epsilon >0$, we can find finite cyclic subgroups $\hat{\Gamma}_k$ of $U(2) $ acts free on the unit sphere $\mathbb{S}^3$, a metric space $(\mathcal{X},d )$, and points $x_k \in X$, $k=1,\cdots ,N$, satisfying the following properties:
\begin{itemize}
\item $|\hat{\Gamma}_k| \leq |\Gamma |$, $k=1,\cdots ,N$,
\item The metric subspace $(\mathcal{X}\sq \{x_k\}_{1\leq k\leq N} ,d)$ is isometric to a Riemannian manifold $(M,g)$ with nonnegative Ricci curvature,
\item The asymptotic cone of $(\mathcal{X},d)$ is isometric to $C(\mathbb{S}^3_{\delta } /\Gamma )$ for some $\delta =\delta (\Gamma ,\epsilon ) >0$,
\item The metric ball $(B_1 (x_k) ,d)$ is isometric to $B_1 (o) \subset ( C(\mathbb{S}^3_{\epsilon } /\hat{\Gamma}_k ) ,o)$.
\end{itemize}
\end{prop}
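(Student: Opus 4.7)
The strategy is to parallel Proposition~\ref{propinductioncyclic} one level higher: construct a $U(2)$-invariant metric with $\Ric \geq 0$ on $\hat{\mathbb{C}}^2/\Gamma_{cent} \cong \mathcal{O}(-n)$, where $n = |\Gamma_{cent}|$, and then take the quotient by $\Gamma/\Gamma_{cent} < PU(2) = SO(3)$. Because the round metric on $\mathbb{S}^2$ is $SO(3)$-invariant, specializing the warped Berger construction of Fact~\ref{factwarpedbergersphere} to $f(\xi) = \frac{\sin(2\xi)}{2}$ (the $\kappa = 2$ case flagged in the remarks below Lemma~\ref{lemmaedgesingularities}) produces a $U(2)$-invariant metric
\[
g = dr^2 + \rho(r)^2 d\alpha^2 + \phi(r)^2 \pi_{Hopf}^*(g_{\mathbb{S}^2_{\rm round}})
\]
on $\mathcal{O}(-n)$, and Lemmas~\ref{lemmakappariccipositive} and~\ref{lemmaedgesingularities} supply functions $\rho, \phi$ for which $\Ric_g \geq 0$, the metric extends smoothly across the exceptional divisor, and the resulting space has Euclidean volume growth with asymptotically conical behavior.

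Passing to the quotient by $\Gamma/\Gamma_{cent}$ (which acts isometrically since $g$ is $U(2)$-invariant) yields a smooth Riemannian manifold with $\Ric \geq 0$ away from finitely many points $p_1, \dots, p_N$ sitting above the orbifold points of $\mathbb{C}P^1/(\Gamma/\Gamma_{cent})$. By the analysis in Section~\ref{sectionblowup}, each $p_k$ is modeled on $\mathbb{C}^2/\hat{\Gamma}_k$ with $\hat{\Gamma}_k < U(2)$ cyclic, acting freely on $\mathbb{S}^3$, and satisfying $|\hat{\Gamma}_k| \leq |\Gamma|$. Since the $p_k$ are finite in number they admit pairwise disjoint $(\Gamma_x/\Gamma_{cent})$-equivariant neighborhoods, in each of which the local picture is precisely the cyclic setting of Section~\ref{sectionsurgeryriccicyclic}. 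One can therefore apply the local product-structure modification of Proposition~\ref{propositionlocalproductstructure} followed by the conical-singularity installation of Lemma~\ref{addaconicalsingularpoint} independently near each $p_k$ while preserving $\Ric \geq 0$, converting $p_k$ into a genuine metric cone singularity isometric on a small ball to $B_1(o) \subset C(\mathbb{S}^3_\epsilon/\hat{\Gamma}_k)$.

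Finally, at infinity one invokes Hamilton's convergence of $3$-dimensional Ricci flow with $\Ric > 0$ to a space form together with Colding--Naber's Lemma~\ref{lmmcoldingnaberexamplelemma} to interpolate between the cross-section of the warped product — a $\Gamma$-invariant metric on $\mathbb{S}^3/\Gamma$ of positive Ricci curvature but not round — and the round metric on $\mathbb{S}^3_\delta/\Gamma$, producing the prescribed asymptotic cone $C(\mathbb{S}^3_\delta/\Gamma)$ for some $\delta = \delta(\Gamma, \epsilon) > 0$. The principal obstacle is ensuring that the three families of modifications — smoothing the edge along the exceptional divisor at $r = 0$, installing conical singularities at the $p_k$, and rounding the asymptotic cross-section — are carried out in mutually disjoint annular or ball regions while preserving both the $\Gamma$-equivariance and the $\Ric \geq 0$ bound; compactness of the exceptional divisor and the purely local nature of each modification make this bookkeeping tractable, and the quantitative Ricci estimates needed reduce to specializations of those already carried out in Section~\ref{sectionsurgeryriccicyclic} at $\kappa = 2$.
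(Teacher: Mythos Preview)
Your proposal is correct and follows essentially the same route as the paper: build a $U(2)$-invariant (standard Berger sphere) metric $g_{\rho,\phi}$ on $\mathcal{O}(-n)$, pass to the quotient by $\tilde{\Gamma}=\Gamma/\Gamma_{cent}$, then repeat the local product-structure/conical-singularity modifications of Proposition~\ref{propositionlocalproductstructure} and Lemma~\ref{addaconicalsingularpoint} at each orbifold point, and finally round the cross-section at infinity via Hamilton plus Lemma~\ref{lmmcoldingnaberexamplelemma}. The only imprecision is your appeal to Lemma~\ref{lemmakappariccipositive}: that lemma constructs a \emph{non-round} $f_\kappa$ with $f_\kappa'(\tfrac{\pi}{2})=-p$, whereas here you have fixed $f=\tfrac{\sin(2\xi)}{2}$ from the outset; what you actually need is only the \emph{method} of Lemma~\ref{lemmaedgesingularities} (the Ricci lower bound for $g_{f,t}$ with $f=\tfrac{\sin(2\xi)}{2}$ is immediate since then $\tfrac{\sin(2\xi)}{2f}\equiv 1$ and the mixed term vanishes), which is exactly how the paper phrases it.
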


\begin{rmk}
In particular, if $\Gamma \cap Z(U(2)) \neq 0$, then we have $|\hat{\Gamma}_k| < |\Gamma |$, $k=1,\cdots ,N$, where $Z(U(2))$ is the center of $U(2)$.
\end{rmk}

Similar to Proposition \ref{propinductioncyclic}, the topological space in Proposition \ref{propinductiongeneral} is also the topological space given in Section \ref{sectionblowup}. Since many of the arguments in the proof of Proposition \ref{propinductiongeneral} are similar to the corresponding parts of the proof of Proposition \ref{propinductioncyclic}, we give here only an outline of the proof.

\begin{proof}
At first , we describe how to construct the ambient space.

By the construction in Section \ref{sectionblowup}, the space we consider here is the orbifold $\mathcal{O}(-n)/\tilde{\Gamma}$ with discrete singularities, where $\tilde{\Gamma} \cong \Gamma /\Gamma \cap Z(U(2)) $, and $Z(U(2))$ is the center of $ U(2) $. Write $S=\{ x\in \mathcal{O}(-n) : \tilde{\Gamma}_x \neq \{\rm id\}  \}$, where $\tilde{\Gamma}_x$ is the stabilizer of $x$. Clearly, $S$ is a discrete subset of the base $\mathbb{C}P^1 \subset \mathcal{O}(-n) $.

Now we can construct a Riemannian metric on $\mathcal{O}(-n)/\tilde{\Gamma}$ by considering the standard Berger sphere. Let $\pi_{Hopf} :\mathbb{S}^3 \to \mathbb{S}^2_{\frac{1}{2}} $ be the Hopf fibration as in Section \ref{sectionsurgeryriccicyclic}, $U$ be the unit tangent vector field along the Hopf fibers, and $\{ U,X,Y \}$ be an orthonormal basis of the tangent bundle of $\mathbb{S}^3$. Then for any smooth functions $\rho (r) $, $\phi (r)$ on $[0,\infty )$ satisfying that
\begin{itemize}
    \item $\rho >0$ on $(0,\infty )$, $\rho (0)=\rho^{even} (0) =0$, $\rho'(0) =n $,
    \item $\phi >0$ on $[0,\infty )$, $\phi^{odd} (0)= 0$,
\end{itemize}
the metric
$$ g_{\rho ,\phi} = dr^2 + \rho (r)^2 U^{*2} + \phi(r)^2 ( X^{*2} + Y^{*2} ) $$
gives a Riemannian metric on $\mathcal{O} (-n)$, and it is easy to see that $g_{\rho ,\phi}$ is invariant under the action of $\tilde{\Gamma}$, where $\{ U^* ,X^* ,Y^* \}$ is the dual basis of $\{ U,X,Y \}$.

As in Section \ref{sectionsurgeryriccicyclic}, by a direct calculation, one see that
\begin{eqnarray}
 \Ric_{g_{\rho ,\phi}} ( \frac{\partial}{\partial r} , \frac{\partial}{\partial r} ) & = & -\frac{\rho''}{\rho} -2\frac{\phi''}{\phi} ,\\
 \Ric_{g_{\rho ,\phi}} ( U , U ) & = & 2\frac{\rho^4}{\phi^4} - 2\frac{\rho\rho'\phi'}{\phi} - \rho \rho'' ,\\
 \Ric_{g_{\rho ,\phi}} ( X , X ) & = & \Ric_{g_{\rho ,\phi}} ( Y , Y ) = 4 -2 \frac{\rho^2}{\phi^2} - \frac{\rho\rho'\phi'}{\phi} - \phi\phi'' - (\phi')^2 ,
\end{eqnarray}
and the mixed terms of $\Ric_{g_{\rho ,\phi}}$ are equal to $0$.

Then we can use the way in the proof of Lemma \ref{lemmaedgesingularities} to construct functions $\rho_{\mu}$ and $\phi_{\mu}$ such that
\begin{itemize}
    \item $\Ric_{g_{\rho ,\phi}} \geq 0$,
    \item For any $r\in \left( 0,\frac{1}{10 } \right) $, $\phi_{ \mu} (r) =1$,
    \item For any $r\in \left( 0,\frac{1}{10 } \right) $, $\frac{\rho_{ \mu} (r)}{n} \leq \min \{ \sin ( r) , \mu + \mu \sin ( r) \} $, $ \frac{\rho'_{ \mu } (r) \sin (2 r) }{ 2 \rho_{ \mu } (r) \cos (2 r) } \in [1-\mu ,1+\mu] $, and $ \frac{-\rho''_{ \mu } (r) }{ 4 \rho_{ \mu } (r) } \geq 1-\mu $,
    \item There are constants $c_1, c_2 ,c_3 >0$ and $R>0$ such that for any $r\geq R$, $\rho_{ \mu} (r) = c_1 (r+ c_3 ) $, and $\phi_{ \mu} (r) = c_2 (r+ c_3 ) $.
\end{itemize} 

Our next step is to adding conical singularities on $x\in S \subset \mathcal{O} (-n) $. The argument in this step follows almost verbatim from the proof of Proposition \ref{propositionlocalproductstructure} and Lemma \ref{addaconicalsingularpoint}. Then we can find a constant $\mu_0$ such that for any $\mu\in (0,\mu_0 )$, one can modify the metric at each point in $ S $.

Finally we just need to modify the cones we constructed to cones over round spheres, and this part is the same as the proof of Proposition \ref{propinductioncyclic}.
\end{proof}

\section{Proof of Theorem \ref{thmconstructionO4}}
\label{sectionproofmainthm}
In this section, we will prove Proposition \ref{propconstructionU2} and Theorem \ref{thmconstructionO4}.

For convenience, let us restate it here.

\begin{prop}[{= Proposition \ref{propconstructionU2}}]
Let $\Gamma $ be a finite subgroup of $U(2) \subset O(4)$ acting freely on the unit sphere $\mathbb{S}^3$, and $M$ be the minimal resolution of $\mathbb{C}^2 /\Gamma $. Then there exist a complete Riemannian metric $g$ on $M$ and a compact subset $K\subset M$ such that $\Ric_g \geq 0 $, and the manifold $(M\sq K,g)$ is asymptotic to the annulus $A_{1,\infty} (o) \subset (C(\mathbb{S}_\delta^3 /\Gamma ) ,o)$ for some $\delta = \delta (\Gamma ) >0$.
\end{prop}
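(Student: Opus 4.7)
The plan is to prove the proposition by induction on $|\Gamma|$, using the surgery statements of Sections \ref{sectionsurgeryriccicyclic} and \ref{sectionsurgeryriccigeneral} as the engine of one inductive step and following the iterative resolution procedure of Section \ref{sectionblowup}. The base case $\Gamma=\{{\rm id}\}$ is immediate: $(M,g)=(\mathbb{C}^2,g_{\Euc})$ is flat and asymptotic to $C(\mathbb{S}^3)$.

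For the inductive step, I would first apply Proposition \ref{propinductiongeneral} to $\Gamma$ (or Proposition \ref{propinductioncyclic} when $\Gamma$ is already cyclic) with a fixed small parameter $\epsilon_0>0$. This yields a metric space $(\mathcal{X}_0,d_0)$ with finitely many conical singular points $\{x_k\}_{1\leq k\leq N}$ such that $\Ric\geq 0$ on the regular part, the asymptotic cone is $C(\mathbb{S}^3_{\delta_0}/\Gamma)$ for some $\delta_0=\delta_0(\Gamma,\epsilon_0)>0$, and a unit ball around each $x_k$ is isometric to $B_1(o)\subset C(\mathbb{S}^3_{\epsilon_0}/\hat{\Gamma}_k)$ for some finite cyclic $\hat{\Gamma}_k<U(2)$. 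The key point is that $|\hat{\Gamma}_k|<|\Gamma|$ strictly in every case: for cyclic $\Gamma$ this is built into Proposition \ref{propinductioncyclic}, and for non-cyclic $\Gamma$ it follows from the remark after Proposition \ref{propinductiongeneral} together with the Appendix \ref{appspherical} observation that $\Gamma\cap Z(U(2))\neq 0$ for every non-cyclic finite subgroup of $U(2)$ acting freely on $\mathbb{S}^3$.

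The inductive hypothesis, applied to each $\hat{\Gamma}_k$, yields a complete manifold $(N_k,h_k)$ with $\Ric_{h_k}\geq 0$ asymptotic to $C(\mathbb{S}^3_{\tilde\delta_k}/\hat{\Gamma}_k)$. Rescaling $h_k$ by the positive constant $\epsilon_0/\tilde\delta_k$ preserves the Ricci lower bound and adjusts the asymptotic cone to $C(\mathbb{S}^3_{\epsilon_0}/\hat{\Gamma}_k)$. By the detailed statement of Proposition \ref{propinductioncyclicdetailedver}, after excising a compact core the end of the rescaled $N_k$ is \emph{exactly} isometric to a conical annulus $A_{R_k,\infty}(o)\subset C(\mathbb{S}^3_{\epsilon_0}/\hat{\Gamma}_k)$; the same exact conical identification holds for the ball around $x_k$ in $\mathcal{X}_0$. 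One can therefore cut a neighborhood of each $x_k$ out of $\mathcal{X}_0$ and glue in the rescaled $N_k$ along the common conical annulus by the identity map. Performing this gluing at all $x_k$ yields a smooth complete Riemannian manifold $(M,g)$ with $\Ric_g\geq 0$, whose end is unchanged from that of $\mathcal{X}_0$ and hence, outside a compact subset $K$, is exactly isometric to $A_{1,\infty}(o)\subset C(\mathbb{S}^3_{\delta_0}/\Gamma)$ after a final rescaling.

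The main obstacle is the exact metric matching along the gluing annuli: any approximate gluing would demand a perturbation that could violate the Ricci lower bound. This exactness is precisely what Propositions \ref{propinductioncyclicdetailedver} and \ref{propinductiongeneral} were tailored to deliver, so the inductive step goes through verbatim. A secondary point is that the smooth underlying manifold produced is the \emph{minimal} resolution of $\mathbb{C}^2/\Gamma$: topologically each gluing reproduces the complex blow-up of Section \ref{sectionblowup}, and the remark following Section \ref{sectionblowup} rules out $(-1)$-curves on the resulting surface (using $\Gamma\cap Z(U(2))\neq 0$ in the non-cyclic case and the standard Hirzebruch–Jung theory in the cyclic case). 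Since $|\Gamma|$ strictly decreases at every step, the induction terminates, proving the proposition.
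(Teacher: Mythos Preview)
Your overall strategy matches the paper's: induction on $|\Gamma|$, with the surgery propositions supplying one step and exact gluing along conical annuli closing it. Your handling of the strict inequality $|\hat\Gamma_k|<|\Gamma|$ and of minimality of the resolution is also correct.

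There is, however, one genuine error. The claim that ``rescaling $h_k$ by the positive constant $\epsilon_0/\tilde\delta_k$ \ldots\ adjusts the asymptotic cone to $C(\mathbb{S}^3_{\epsilon_0}/\hat{\Gamma}_k)$'' is false: the cone angle is scale-invariant, so if $(N,h)$ is asymptotic to $C(\mathbb{S}^3_{\tilde\delta})$ then so is $(N,\lambda^2 h)$ for every $\lambda>0$. Rescaling therefore cannot be used to match the angles for gluing. The fix is to reverse the order of choices. The groups $\hat\Gamma_k$ are determined by the resolution procedure of Section~\ref{sectionblowup} and are independent of the parameter $\epsilon$ in Propositions~\ref{propinductioncyclic} and~\ref{propinductiongeneral}; so one should first identify the $\hat\Gamma_k$, apply the inductive hypothesis to obtain $(N_k,h_k)$ with asymptotic angle $\tilde\delta_k$, and only \emph{then} invoke the surgery proposition with $\epsilon$ chosen to match. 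In the cyclic case there is a single $\hat\Gamma$ and one simply takes $\epsilon=\tilde\delta$; when several $\tilde\delta_k$ differ, one can first use Lemma~\ref{lmmcoldingnaberexamplelemma} with the constant family $g(s)\equiv g_{\mathbb{S}^3}$ to append a neck to each $N_k$ that shrinks its asymptotic angle down to a common value $\epsilon\le\min_k\tilde\delta_k$. The paper sidesteps this last wrinkle by completing the entire cyclic induction first and only afterwards reducing the non-cyclic case to it via a single application of Proposition~\ref{propinductiongeneral}.
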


\begin{proof}
We prove this proposition for cyclic $\Gamma $ by induction on $|\Gamma|$.

If $|\Gamma| =1 $, $M$ is diffeomorphic to $\mathbb{R}^4$, and this proposition is trivial.

Now we assume that this proposition holds when $|\Gamma| \leq n-1 $. 

Suppose that $|\Gamma | =n $. Then we can apply Proposition \ref{propinductioncyclicdetailedver} to $\Gamma$. In this case, for any $\epsilon >0$, we can find a finite cyclic subgroup $\hat{\Gamma}$ of $U(2) $ acts free on the unit sphere $\mathbb{S}^3$ and a pointed metric space $(X,d,x)$ satisfying the following properties:
\begin{itemize}
\item $|\hat{\Gamma}| \leq n-1$,
\item The metric subspace $(X\sq \{x\} ,d)$ is isometric to a Riemannian manifold $(M,g)$ with nonnegative Ricci curvature,
\item $(X,d)$ is isometric to $C(\mathbb{S}^3_{\delta } /\Gamma )$ for some $\delta =\delta (\Gamma ,\epsilon ) >0$ outside a compact subset,
\item The metric ball $(B_1 (x) ,d)$ is isometric to $B_1 (o) \subset ( C(\mathbb{S}^3_{\epsilon } /\hat{\Gamma} ) ,o)$.
\end{itemize}
By induction hypothesis, we can glue a manifold $M_{\hat{\Gamma}}$ such that $M_{\hat{\Gamma}}$ is isometric to $C(\mathbb{S}^3_{\hat{\delta} } /\hat{\Gamma} )$ for some $\hat{\delta} >0$ outside a compact subset. Then the proposition holds when $|\Gamma| =n $. Note that in our construction, the manifold $X\sq \{x\} \cup M_{\hat{\Gamma}} $ is diffeomorphic to a smooth complex surface without $(-1)$-curve, and hence $X\sq \{x\} \cup M_{\hat{\Gamma}} $ is diffeomorphic the minimal resolution of $\mathbb{C}^2 /\Gamma $ \cite{bhpv1}. It follows that this proposition holds for cyclic $\Gamma $.

Now we assume that $\Gamma$ is not a cyclic group. Then by Proposition \ref{propinductiongeneral}, one can apply the argument above to show that this proposition can be reduced to the cyclic case. This completes the proof.
\end{proof}

As a corollary, we can prove Theorem \ref{thmconstructionO4}.

\vspace{0.2cm}

\noindent \textbf{Proof of Theorem \ref{thmconstructionO4}: }
Let $\Gamma $ be a finite subgroup of $O(4)$ acting freely on the unit sphere $\mathbb{S}^3$. Then Proposition \ref{propunitaryrepresentation} shows that there exists a unitary representation $\pi :\Gamma \to U(2)$ such that $\mathrm{ker} \pi = \{ E \} $, the image $\pi (\Gamma ) < U(2) $ is also a group acting freely on $\sphere^3$, and $\sphere^3 /\Gamma \cong \sphere^3 /\pi (\Gamma )  $ as Riemannian manifolds, where $E\in O(4)$ is the identity matrix. Then we can replace $\Gamma$ by $\pi (\Gamma)$, and one can see that Proposition \ref{propconstructionU2} implies the theorem.
\qed

\begin{rmk}
   Now we briefly describe how to construct infinitely many Riemannian manifolds with non-negative Ricci curvature that are asymptotic to $C(\mathbb{S}_\delta^3 /\Gamma )$. For this, we need to revisit the construction of the function $\rho_{\kappa ,\mu}$ in Lemma \ref{lemmaedgesingularities}. Note that although we set $\varepsilon =\mu $ for convenience in the following steps, we can actually fix a small $\varepsilon$ and shrink the value of $\mu$. Consequently, we can ensure that the metric on the given open subset converges to $ W_{\frac{\sin (2r)}{2}} \times W_{\varepsilon \sin (r) } $. Then we can glue any number of $\mathbb{C}P^2$ on this open subset. The argument here is similar to those of Perelman \cite{perel1} and Menguy \cite{menguy2}.
\end{rmk}

\appendix

\section{Unitary representations of spherical groups} 
 \label{appspherical}

In this appendix we consider the unitary representations of fundamental groups of spherical $3$-manifolds. Our goal is to obtain the following proposition.

\begin{prop}
\label{propunitaryrepresentation}
Let $G < O(4)$ be a finite group acting freely on $\sphere^3$. Then there exists a unitary representation $\pi :G \to U(2)$ such that $\mathrm{ker} \pi = \{ \rm id \} $, the image $\pi (G ) < U(2) $ is also a group acting freely on $\sphere^3$, and $\sphere^3 /\Gamma \cong \sphere^3 /\pi (\Gamma )  $ as Riemannian manifolds, where ${\rm id} \in O(4)$ is the identity matrix.
\end{prop}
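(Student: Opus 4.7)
The plan is to reduce to $G < SO(4)$ and then exploit the spin double cover $SU(2) \times SU(2) \to SO(4)$. First I would show that any $g \in O(4)$ with $\det g = -1$ must have $+1$ as an eigenvalue: its complex eigenvalues come in conjugate pairs on the unit circle (each pair contributing $+1$ to the determinant), so the real eigenvalues $\pm 1$ have product $-1$, forcing an odd number of them to equal $+1$. Such a $g$ would then fix a point of $\sphere^3$, contradicting freeness of the $G$-action. Hence $G < SO(4)$.

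Next, identifying $\sphere^3$ with $SU(2)$, I would use the double cover $\Pi : SU(2) \times SU(2) \to SO(4)$ under which $[(A,B)]$ acts on $x \in SU(2)$ by $x \mapsto A x B^{-1}$. Set $\tilde G := \Pi^{-1}(G)$ and denote its projections to the two factors by $H_L$ and $H_R$. A pair $(A,B) \neq \pm(I,I)$ fixes some $x \in SU(2)$ iff $A$ and $B$ are conjugate in $SU(2)$, iff $\mathrm{tr}(A) = \mathrm{tr}(B)$; hence the free-action hypothesis is equivalent to $\mathrm{tr}(A) \neq \mathrm{tr}(B)$ for every nontrivial $(A,B) \in \tilde G$.

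The key step will be to show that, after conjugation in $SU(2) \times SU(2)$, one of $H_L, H_R$ lies inside a maximal torus $U(1) \subset SU(2)$, i.e., is cyclic. Since the standard inclusion $U(2) \hookrightarrow SO(4)$ lifts under $\Pi$ to $SU(2) \times U(1) \hookrightarrow SU(2) \times SU(2)$, this claim immediately exhibits a conjugate $G' < U(2)$ of $G$ inside $SO(4)$. The conjugating element is a Riemannian isometry of $\sphere^3$, so $\sphere^3/G \cong \sphere^3/G'$ as Riemannian manifolds, and the induced isomorphism $\pi : G \to G'$ provides the required faithful unitary representation.

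The main obstacle is establishing that one of the projections is cyclic. I would handle this via the classification of finite subgroups of $SU(2)$ (cyclic, binary dihedral, and the three binary polyhedral groups), combined with the trace-inequality constraint: one enumerates the possible pairs $(H_L, H_R)$ and shows that whenever both projections are non-cyclic, the abundance of shared trace values among elements of two non-cyclic finite subgroups of $SU(2)$ forces the existence of a nontrivial $(A,B) \in \tilde G$ with $\mathrm{tr}(A) = \mathrm{tr}(B)$, violating freeness. This case analysis is classical, going back to Hopf's work on Clifford--Klein space forms and completed in Wolf's \emph{Spaces of Constant Curvature}; modulo this classical input, the proof concludes.
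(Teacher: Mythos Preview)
Your proposal is correct and follows essentially the same route as the paper: reduce to $SO(4)$, pass to the two-factor description via the spin cover, and use the classification of finite subgroups to force one projection to be cyclic, hence contained in a circle, yielding $G$ conjugate into $U(2)$. The only substantive difference is packaging: the paper works downstairs in $SO(3)\times SO(3)$ acting on $\mathbb{R}P^3\cong SO(3)$ and gives a self-contained argument for cyclicity of one side (all order-$2$ elements of $SO(3)$ are conjugate, so both normal ``kernel'' subgroups cannot contain order-$2$ elements, and the ADE list then forces one to be cyclic), whereas you work upstairs in $SU(2)\times SU(2)$ and outsource that step to the Hopf--Wolf classification.
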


\begin{rmk}
Actually, if the finite subgroups $\Gamma_1 $, $ \Gamma_2 $ of $O(4)$ acting freely on $\sphere^3$, $\Gamma_1 \cong \Gamma_2 $, but $\sphere^3 /\Gamma_1 $ is not isometric to $\sphere^3 /\Gamma_2$, then both $\Gamma_1 $ and $ \Gamma_2 $ are cyclic groups.
\end{rmk}

Here we follow the argument in Thurston's book \cite{thur1}. See also \cite[Part III]{wolf1}.

\begin{proof}
Since the group generated by $G$ and $ \{ \pm {\rm id_4} \} < O(4) $ acts freely on $\mathbb{S}^3$, one can conclude that the group $\hat{G} = G/G\cap \{ \pm {\rm id_4} \} $ acts freely on $\mathbb{R}P^3 \cong \mathbb{S}^3 / \{ \pm {\rm id_4} \} $. Note that $\mathbb{R}P^3 \cong SO(3) $ and the action of $\hat{G}$ on $\mathbb{R}P^3 $ is orientation preserving. Then there exists an embedding
$$ \iota = (\iota_1 ,\iota_2 ) : \hat{G} \longrightarrow SO(3) \times SO(3) ,$$
such that the action of $\hat{G}$ on $\mathbb{R}P^3 $ can be given by $(g,x) \mapsto \iota_1 (g) x \iota_2 (g)^{-1} $.

Write $H_1 = \iota_1 (\hat{G} ) $, $H_2 = \iota_2 (\hat{G} ) $, $G_1 = \iota (\hat{G} ) \cap (SO(3) \times \{ {\rm id_3} \}) $, and $G_2 = \iota (\hat{G} ) \cap (\{ {\rm id_3} \} \times SO(3)) $. It is easy to check that $\hat{G} $ acts freely on $SO(3) $ implies that for any $g\in \hat{G}$, $\iota_1 (g)$ is not conjugate to $\iota_2 (g)$ in $O(4)$. Since all elements of order $2$ are conjugate to ${\rm diag } (1,-1,-1)$ in $SO(3)$, we see that $G_1$ and $G_2$ cannot both have elements of order $2$. Without loss of generality, we assume that $G_2$ has no elements of order $2$. Let $g \in \hat{G}$ such that the order of $\iota_1 (g)$ is $2$. Hence $\iota_2 (g^2) \in G_2 $, and the order of $\iota_2 (g^2)  $ is odd. Write $ {\rm ord}(\iota_2 (g^2))=m $. Then ${\rm ord}(\iota_1 (g^m))=2$. Since $\iota_2 (g^m) $ $\iota_1 (g^m) $, one can conclude that the order of $\iota_2 (g^m) $ is not equal to $2$, $\iota_2 (g^m) = {\rm id_3} $, and $\iota_1 (g)= \iota_1 (g^m) \in G_1 $. Hence all elements of order $2$ in $H_1$ are elements in $G_1$. 

According to the famous A-D-E classification of finite subgroups of $SO(3)$, which can be found in \cite[Chapter 1]{klein1} or \cite[Theorem 11]{egrs1}, one can observe that $H_1$ is isometric to one of the following:
\begin{itemize}
\item $\mathbb{Z}_n$, the cyclic group of order $n\geq 2$.
\item $\mathbb{D}_n$, the dihedral group of symmetries of a regular $n$-gon, where $n\geq 2$.
\item $\mathbb{T} $, the tetrahedral group of $12$ rotational symmetries of a tetrahedron.
\item $\mathbb{O} $, the octahedral group of $24$ rotational symmetries of an octahedron.
\item $\mathbb{I} $, the icosahedral group of $60$ rotational symmetries of an icosahedron.
\end{itemize}

Since all elements of order $2$ in $H_1$ are elements in $G_1$, we see that $G_1 \cong \mathbb{T} $ or $G_1 \cong \mathbb{Z}_n $ for some $n\geq 2$. If $G_1 \cong \mathbb{T} $, then $[G_1 : H_1]=3 $, and thus $[G_2 : H_2]=[\hat{G} : H_1 \times H_2 ]=[G_1 : H_1]=3 $. Consequently, if $G_1$ is not a cyclic group, $|G_2|$ must be odd. Moreover, it's evident that $G_2$ is a cyclic group when $|G_2|$ is odd. So, within these two groups $G_1$ and $G_2$, there must be at least one cyclic group. 

Without loss of generality, we assume that $G_2$ is a cyclic group. Then $G_2$ is generated by a rotation around some axis. Consequently, there exists an embedding $\tau : SO(2) \to (\{ {\rm id_3} \} \times SO(3)) $ such that $G_2 < \tau (SO(2)) $ and $\tau (SO(2))$ commutes with $\hat{G}$. By lifting the action of $\tau (SO(2))$ on $\mathbb{R}P^3 \cong SO(3) $ to $\mathbb{S}^3$, we can obtain a fixed point free $SO(2)$-action on $\mathbb{S}^3$ that preserves the metric, and this action commutes with the action of $G$. Hence the $SO(2)$-action gives a complex structure on $\mathbb{R}^4$, and $G< U(2)$, where $U(2)$ is the unitary group corresponding to this complex structure. This completes the proof.
\end{proof}


\begin{thebibliography}{BBEGZ16} 
{

\bibitem{anderson1} M. T.~Anderson:
\emph{Convergence and rigidity of manifolds under Ricci curvature bounds},
Invent. Math. $\mathbf{102}$ (1990), 429–445.

\bibitem{bkn1} S. Bando, A. Kasue, H. Nakajima:
\emph{On a construction of coordinates at infinity on manifolds with fast curvature decay and maximal volume growth},
Invent. Math. $\mathbf{97}$ (1989), 313–349.

\bibitem{bhpv1} W. P. Barth, K. Hulek, C. A. M. Peters and A. Ven:
\emph{Compact complex surfaces},
second edition, Springer-Verlag, Berlin, 2004. xii+436 pp.

\bibitem{be1}  A. L. Besse:
\emph{Einstein Manifolds},
Reprint of the 1987 edition. Classics in Mathematics. Springer-Verlag, Berlin, 2008. xii+516 pp.

\bibitem{biq1}  O. Biquard:
\emph{D\'esingularisation de m\'etriques d’Einstein. I},
Invent. Math. $\mathbf{192}$ (2013), no.1, 197–252.

\bibitem{biq2}  O. Biquard:
\emph{D\'esingularisation de m\'etriques d’Einstein. II},
Invent. Math. $\mathbf{204}$ (2016), no.1, 473–504.

\bibitem{bps1}  E. Bru\`e, A. Pigati, D. Semola:
\emph{Topological regularity and stability of noncollapsed spaces with Ricci curvature bounded below},
preprint, arxiv: 2405.03839.

\bibitem{chco3} J.~Cheeger, T. H. Colding:
\emph{On the structure of spaces with Ricci curvature bounded below. I},
J. Differential Geom. $\mathbf{46}$ (1997), 406–480.

\bibitem{jcwsjan1} J.~Cheeger, W.-S. Jiang, A.~Naber:
\emph{Rectifiability of singular sets of noncollapsed limit spaces with Ricci curvature bounded below},
Ann. of Math. $\mathbf{193}$ (2021), 407–538. 

\bibitem{colding1} T. H. Colding:
\emph{Ricci Curvature and Volume Convergence},
Ann. of Math. $\mathbf{145}$ (1997), 477-501.

\bibitem{coldingnaber2} T.-H. Colding, A.~Naber:
\emph{Characterization of tangent cones of noncollapsed limits with lower Ricci bounds and applications},
Geom. Funct. Anal. $\mathbf{23}$ (2013), 134–148.

\bibitem{rhamilton1} R.~Hamilton:
\emph{Three-manifolds with positive Ricci curvature},
J. Differential Geom. $\mathbf{17}$ (1982), 255–306.

\bibitem{hnw1} E. Hupp, A. Naber, K.-H. Wang: 
\emph{Lower Ricci Curvature and Nonexistence of Manifold Structure},
preprint, arxiv: 2308.03909, to appear in Geom. Topol.

\bibitem{klein1} E. Klein:
\emph{Lectures on the icosahedron and the solution of equations of the fifth degree},
Dover Publications, Inc., New York, N.Y., revised edition, 1956.

\bibitem{krhm1} P. B. Kronheimer:
\emph{The construction of ALE spaces as hyper-K\"ahler quotients},
J. Differential Geom. $\mathbf{29}$ (1989), 665–683.

\bibitem{kl1} K. Lamotke:
\emph{Regular Solids and Isolated Singularities},
Adv. Lectures Math. Friedr. Vieweg \& Sohn, Braunschweig, 1986. x+224 pp.

\bibitem{lg1} G. Liu: 
\emph{Complete K\"ahler manifolds with nonnegative Ricci curvature},
preprint, arxiv: 2404.08537.

\bibitem{menguy2} X. Menguy:
\emph{Noncollapsing examples with positive Ricci curvature and infinite topological type},
Geom. Funct. Anal. $\mathbf{10}$ (2000), 600–627.

\bibitem{ozuch1} T. Ozuch:
\emph{Noncollapsed degeneration of Einstein 4-manifolds I.},
Geom. Topol. $\mathbf{26}$ (2022), 1483–1528.

\bibitem{ozuch2} T. Ozuch:
\emph{Noncollapsed degeneration of Einstein 4-manifolds II.},
Geom. Topol. $\mathbf{26}$ (2022), 1529-1634.

\bibitem{pp1} P.~Petersen:
\emph{Riemannian geometry},
3rd ed., Grad. Texts in Math, 171. Springer, Cham, 2016.

\bibitem{perel1} G.~Perelman:
\emph{Construction of manifolds of positive Ricci curvature with big volume and large Betti numbers},
Comparison geometry (Berkeley, CA, 1993-94), 157-163, Math. Sci. Res. Inst. Publ., 30, Cambridge Univ. Press, Cambridge, 1997.

\bibitem{egrs1} E. G. Rees:
\emph{Notes on Geometry},
Universitext in Mathematics. Berlin-Heidelberg-New York: Springer 1983.

\bibitem{sus1} I. \c Suvaina:
\emph{ALE Ricci-flat K\"ahler metrics and deformations of quotient surface singularities},
Ann. Global Anal. Geom. $\mathbf{41}$ (2012), 109-123.

\bibitem{thur1} W. Thurston:
\emph{Three-dimensional geometry and topology. Vol. 1},
Princeton Mathematical Series, 35. Princeton University Press, Princeton, New Jersey, 1997.

\bibitem{tg5} G.~Tian:
\emph{On Calabi's conjecture for complex surfaces with positive first Chern class},
Invent. Math. $\mathbf{101}$ (1990), 101–172.

\bibitem{wolf1} J. A. Wolf:
\emph{Spaces of constant curvature},
AMS Chelsea Publishing, Providence, RI, 2011. xviii+424 pp.

\bibitem{wright1} E. P. Wright:
\emph{Quotients of gravitational instantons},
Ann. Global Anal. Geom. $\mathbf{41}$ (2012), 91-108.

}\end{thebibliography}
\end{document}